\newtheorem{theo}{Theorem}
\newtheorem{prop}{Proposition}
\newtheorem{coro}{Corollary}
\newtheorem{lema}{Lemma}
\newtheorem{defi}{Definition}
\def\be#1\ee{\begin{equation}#1\end{equation}}
\newcommand{\ba}{\begin{eqnarray} }
\newcommand{\ea}{\end{eqnarray} }
\def\bt#1\et{\begin{theo}#1\end{theo}}
\def\bl#1\el{\begin{lema}#1\end{lema}}
\def\bp#1\ep{\begin{prop}#1\end{prop}}
\def\bd#1\ed{\begin{defi}#1\end{defi}}
\def\cca{{\bf a}}
\def\ccb{{\bf b}}
\def\ccc{{\bf c}}
\def\ccA{{\cal A}}
\def\ccB{{\cal B}}
\def\ccC{{\cal C}}
\def\ccD{{\cal D}}
\def\ccF{{\cal F}}
\def\ccX{{\cal X}}
\def\va{\varepsilon}
\def\ra{\rightarrow}
\def\E{\mathbf{E}}
\def\P{\mathbf{P}}
\def\Var{\mathbf{Var}}
\def\N{{\mathbb N}}
\def\R{{\mathbb R}}
\def\Z{{\mathbb Z}}
\def\ls{\le}
\def\gs{\ge}
\title{\bf Exponential Concentration Inequalities for Additive Functionals of Markov Chains}
\author{Rados{\l}aw Adamczak\thanks{Research partially supported by  MNiSW Grant N N201 608740 and the Foundation for Polish Science} and Witold Bednorz\thanks{Research partially supported by  MNiSW Grant N N201 608740}\\University of Warsaw}
\begin{document}

\maketitle
\begin{abstract}
Using the renewal approach we prove exponential inequalities for
additive functionals and empirical processes of ergodic Markov
chains, thus obtaining counterparts of inequalities for sums of
independent random variables. The inequalities do not require
functions of the chain to be bounded and moreover all the
involved constants are given by explicit formulas whenever the usual drift condition holds,
which may be of interest in practical applications e.g. to MCMC
algorithms.
\\
AMS 2000 subject classifications : Primary 60E15, 60J20, secondary 60K05, 65C05\\
Keywords: Markov chains, exponential inequalities, drift criteria
\end{abstract}

\section{Introduction}\label{sect0}

This paper concerns exponential type concentration inequalities
for additive functionals of Markov chains, i.e. for sums of the
form
\begin{displaymath}
f(X_0)+\ldots+f(X_{n-1}),
\end{displaymath}
where $(X_i)_{i\in \N}$ is a Markov chain.

Such inequalities are widely used in Markov Chain Monte Carlo
theory to provide estimates on the rate of convergence for certain
algorithms. Moreover, in certain statistical applications (e.g. for
M-estimators) one is interested in estimates on suprema of such
functionals over some classes $\mathcal{F}$ of functions.

Concentration phenomenon in the Markov chain setting has been
studied in many papers to mention
\cite{Marton,Samson,Cle,MeynKos1,MeynKos2,DGM,KontoRam,Cha,Rad,BerCle}. Clearly in general one
cannot hope to recover classical results for sums of independent
random variables at their full strength. Therefore the goal is
to provide counterparts of the inequalities for independent
summands under conditions, which are relatively easy to verify and
involve only 'computable' characteristics of the chain. From the
practical point of view, it is also important to derive estimates
with explicit and 'reasonable' constants.

Among the most successful approaches developed to obtain deviation
inequalities for Markov chains one can list the transportation of
measure method (see \cite{Marton, Samson}), related coupling techniques \cite{Cha}, martingale
approximation \cite{KontoRam} and renewal theory \cite{Cle,DGM,Rad} based on the splitting technique introduced by Nummelin \cite{NumSp} and Athreya and Ney \cite{AN} (for general
introduction see \cite{MeynTwe,Num1,Che}).
\smallskip

\noindent In this paper we follow the renewal approach, i.e. we
decompose the sum of functionals of a Markov chain into excursions
from and to an existing or artificially created atom. This concept
allows to reduce the question of exponential inequalities for
Markov chains to the concentration of sums of independent (or
nearly independent) random variables at the expense of some
further technical work. Moreover, under additional assumptions, it
allows to incorporate in the estimates the limiting variance of a
rescaled functional and thus to obtain Bernstein type inequalities
corresponding to the central limit theorem. For reader's
convenience and further use we recall below the classical
formulation for independent summands (see e.g. \cite{Bou})

\noindent {\bf Bernstein's inequality} {\it Let
$(\xi_i)^{\infty}_{i=0}$ be independent random variables such that
$\E \xi_i=0$ and $|\xi_i|\ls M$. Let $\E \xi_i^2=\sigma^2$, then for any $t > 0$,
$$
\P(|\sum^{n-1}_{i=0} \xi_i|\gs t)\ls 2\exp(-\frac{t^2}{2(n\sigma^2+Mt/3)}).
$$}

If one insists on having an estimate in terms of the variance,
Bernstein's inequality requires that all random variables be
bounded, which is rather restrictive. Clearly one expects similar
results to hold under more general assumptions about
integrability of the summands, e.g. when
$\E\exp(|\xi_i|^{\alpha}/c^{\alpha})\ls 2$ for some $\alpha > 0$ (which corresponds to
finiteness of exponential Orlicz norms). This is indeed the case,
though the inequality is a little bit weaker (see the results by Borovkov \cite{Bor1,Bor2}). In a more general
context of empirical processes inequalities for independent summands with finite exponential Orlicz norms were proved in \cite{Rad}.

Let us now describe in more detail the setting we consider and state some of the results we obtain. We remark that for simplicity in the Introduction we present precise estimates only for a special case of geometrically ergodic Markov chains, however the results we obtain allow to deduce bounds under weaker assumptions of subexponential ergodicity. Below we indicate the form of the inequality we will obtain in the general case, postponing precise definition of the parameters involved to the main body of the article.

Let ${\bf X}=\{X_k:\; k\in
\Z_{+}\}$ be a time-homogeneous Harris ergodic Markov chain defined on the state
space $(\ccX,\ccB)$ (to avoid certain measurability issues we will
assume that $\ccB$ is countably generated, which is enough for all
the applications we have in mind) and let ${\bf X^m}$ denote its
$m$-skeleton, i.e. ${\bf X^m}=\{X_{mk}:\; k\in \Z_{+}\}$. We will denote by $(\Omega,\ccF,\P)$ the general
probability space on which the process is defined and by
$\P_{\mu}$ its conditional version where the starting distribution
equals $\mu$, i.e $\P_{\mu}(X_0\in A)=\mu(A)$, $A\in\ccB$. For
simplicity we write $\P_x$ whenever $\mu=\delta_x$. Let $\P(x,A)$,
$x\in\ccX$, $A\in \ccB$ denote the Markov chain transition
function and let $P$ be the operator on the measurable functions
given by $(P f)(x)=\E_x f(X_1)=\int f(y)\P(x,dy)$.

The general theory of Markov chains states that whenever
the chain is aperiodic and there exists an invariant probability
measure $\pi$ on $(\ccX,\ccB)$ then the small set condition is
verified, i.e. there exists $C\in\ccB$ of positive $\pi$-measure,
a probability measure $\nu$, $\nu(C)>0$, $\delta>0$ and an integer
$m$, such that \be\label{small} \P^m(x,B)\gs \delta
\nu(B),\;\;x\in C,\;\;B\in\ccB. \ee Moreover the $m$-step chain
$\{X_{km}:\;k\in \Z_{+}\}$ may be split to form a chain that
possesses a recurrent atom. The construction is well known and as
references we recommend  \cite{Num1, MeynTwe,Che}. We recall it briefly and present its basic properties in Section \ref{sect1}.

Following the general regeneration approach to Markov chains, one
can apply inequalities for independent or one-dependent unbounded random variables to excursions from and to an
existing or artificially constructed atom of a chain.

Following this strategy we provide bounds on
$\P_x(|\sum^{n-1}_{k=0}f(X_k)|>t)$ in the form close to the one of Bernstein's
inequality, i.e. for some $\alpha \in (0,1]$ and all $t \ge 0$,
\begin{align*}
 \P_x(|\sum^{n-1}_{i=0}f(X_i)|>t)\ls& C\exp(-\frac{c_1t^{\alpha}}{\cca^{\alpha}})+ C\pi^{\ast}(\theta)^{-1}\exp(-c_2\frac{t^{\alpha}}{\ccb^{\alpha}})+
C\exp(-\frac{c_3t^{\alpha}}{2(4\ccc)^{\alpha}}) \\
&+ C
\exp(-\frac{c_4t^2}{(\lceil n/(2m)\rceil\sigma^2+Mt/6)}).
\end{align*}
with $M=\ccc(3\alpha^{-2}\log(n/m))^{1/\alpha}$, where $\cca,\ccb,\ccc$ are certain parameters of the chain and $\sigma^2 = \E s_0^2$ is the variance of the block appearing in the regeneration construction, $\theta$ is the artificially created atom of the split chain and the constants $C,c_k$ depend only on $\alpha$ (through explicit formulas). At this point, to avoid technicalities, we do not present precise definition of the parameters appearing in the above inequality or the assumptions under which it holds and mention only that in Section \ref{section_drifts} we provide tools allowing to estimate them by means of various drift conditions, which in the Markov chain setting are the most convenient and widely used technique for proving exponential integrability. In general it is known \cite{MeynKos2} that for $\alpha=1$ one cannot avoid geometric drifts that are usually
difficult to work with and verify in practice (see Section \ref{sect_milt_dift}). To avoid this obstacle we show that usual drift condition related to (sub)-geometric ergodicity also provides a bound, however with $\alpha<1$ (see Section \ref{secRegularDrift}).

The simplest setting we describe is that of geometric
ergodicity where we show a class of functions that depends on the drift condition only for which one can obtain strong concentration results.
Again we pay attention to provide explicit constants in the estimates which may allow for applications in MCMC algorithms.
Recall thus the classical drift condition, which is satisfied if for some $V\colon \mathcal{X}\to [1,\infty)$,
\begin{align}\label{eq:geometric_ergodicity_drift_new}
 (PV)(x)-V(x)\ls -\lambda V(x)+b1_C(x),
\end{align}

It is well known that the above condition is equivalent to geometric ergodicity of the chain, i.e. to exponential convergence $\|P^n(x,\cdot) - \pi\|_{TV} \le C(x)\rho^n$ for some $\rho \in (0,1)$ and all $x\in \mathcal{X}$ (see \cite{MeynTwe} for the general theory).

The following theorem (which follows from our general results) provides concentration for additive functionals of geometrically ergodic strongly aperiodic (i.e. satisfying \eqref{small} with $m=1$) Markov chains in terms of the parameters appearing in \eqref{small} and \eqref{eq:geometric_ergodicity_drift_new}. To simplify the presentation and postpone technical details we provide here a simplified version of the result, which at full strength is given in Theorem \ref{thm:main_geometric_new} in Section \ref{sect3}.

\begin{theo}\label{thm:A_new}
Assume that $\{X_n\}_{n\ge0}$ is a Harris recurrent strongly aperiodic Markov chain on the space $(\mathcal{X},\mathcal{B})$ admitting a unique stationary measure $\pi$. Assume furthermore that conditions \eqref{small} with $m=1$ and \eqref{eq:geometric_ergodicity_drift_new} are satisfied.
Let finally $s > 0$ and consider an arbitrary measurable function $g\colon \mathcal{X} \to \R$ such that
\begin{displaymath}
|g(x)| \le \kappa \Big(\log V(x)\Big)^s
\end{displaymath}
for some $\kappa \ge 0$. Set $\alpha = 1/(s+1)$. Then for every $x \in \mathcal{X}$, $\eta \in (0,1]$ and $t > 0$,
\begin{align}\label{eq:main_est_geometric_erg_new}
& \P_x(|\sum^{n-1}_{i=0}g(X_i) - n\pi g|>t)\ls
2\exp(-\frac{(t\eta)^{\alpha}}{A_1^{\alpha}})+
C\exp(-\frac{(t\eta)^{\alpha}}{A_2^{\alpha}})\\
&+ e^{8}\exp(-\frac{(\eta t)^{\alpha}}{A_3^\alpha}) +
2^{1+\eta/(2+\eta)}\exp(-\frac{t^2}{2((1+\eta)\sigma^2n+A_4(\eta)(\log n)^{1/\alpha}
t)}),\nonumber
\end{align}
\noindent where
\begin{align}\label{eq:sigmadef_new}
\sigma^2 = \lim_{n\to \infty} \frac{\Var(\sum_{i=0}^{n-1} g(X_n))}{n} = \Var_\pi g(X_0)^2 + 2\sum_{i=1}^\infty {\rm Cov}_\pi g(X_0)g(X_i),
\end{align}
the constants $C,A_1,A_2,A_3$ depend only on $\delta, V, \alpha, \lambda, b, \kappa$ and the constant $A_4(\eta)$ depends only on $\delta,V,\alpha, \lambda, b, \kappa, \eta$ (in all cases the dependence is explicit, $A_4(\eta) \simeq 1/\eta$ for $\eta \simeq 0$).
\end{theo}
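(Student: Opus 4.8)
The plan is to derive \eqref{eq:main_est_geometric_erg_new} as a specialization of the general renewal-based Theorem \ref{thm:main_geometric_new}, so that the bulk of the work is to verify its hypotheses and then translate the abstract block parameters $\cca,\ccb,\ccc$ and the variance $\sigma^2$ into explicit expressions in $\delta,\lambda,b,\kappa$ and $V$. Since the chain is strongly aperiodic, condition \eqref{small} holds with $m=1$, and I would invoke the Nummelin splitting recalled in Section \ref{sect1} to manufacture a recurrent atom $\theta$ for the split chain. Writing $\tau_0<\tau_1<\cdots$ for the successive hitting times of $\theta$ and setting
\be
s_k=\sum_{i=\tau_{k-1}}^{\tau_k-1}\bigl(g(X_i)-\pi g\bigr),
\ee
the centered sum $\sum_{i=0}^{n-1}(g(X_i)-\pi g)$ splits into an initial block, a random number of identically distributed, one-dependent interior blocks $s_k$, and a terminal incomplete block. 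This is precisely the decomposition feeding Theorem \ref{thm:main_geometric_new}, so it remains to control the integrability of the blocks and the number of regenerations.

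The crucial step, and the one I expect to be the main obstacle, is to show that the blocks $s_k$ have finite Orlicz norm of order $\psi_\alpha$ with $\alpha=1/(s+1)$, together with an explicit bound on that norm in terms of the drift data. From the geometric drift \eqref{eq:geometric_ergodicity_drift_new} one extracts two facts: (i) a geometric tail for the return time, $\P_\theta(\tau>n)\ls C\rho^n$, whence $\tau\in\psi_1$ with explicit norm; and (ii) exponential integrability of $\max_{i<\tau}\log V(X_i)$, which the drift delivers through moment bounds on $\sum_{i<\tau}V(X_i)^\gamma$, so that $\max_{i<\tau}\log V(X_i)\in\psi_1$. Then the crude bound
\be
\sum_{i<\tau}|g(X_i)|\ls \kappa\,\tau\cdot\bigl(\max_{i<\tau}\log V(X_i)\bigr)^s,
\ee
combined with the Orlicz product inequality $1/\gamma=1/\gamma_1+1/\gamma_2$ applied to the factor $\tau\in\psi_1$ and the factor $(\max_{i<\tau}\log V)^s\in\psi_{1/s}$, yields $s_k\in\psi_{1/(s+1)}=\psi_\alpha$. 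This product mechanism — one factor of $\psi_1$ from the block length and an $s$-fold loss from the logarithmic growth of $V$ — is exactly what forces $\alpha=1/(s+1)$; turning this qualitative statement into explicit norm bounds in terms of $\delta,\lambda,b,\kappa$ and $V$ is the delicate technical core, and it fixes the concrete values of $\cca,\ccb,\ccc$.

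With the blocks in $\psi_\alpha$ and explicit norms in hand, I would feed the decomposition into the independent/one-dependent exponential Orlicz inequalities of \cite{Rad} underlying Theorem \ref{thm:main_geometric_new}. The random number of interior blocks is handled by the renewal structure: its typical value is $\approx n/\E_\theta\tau$, and the events that it deviates from this, together with the contributions of the initial and terminal incomplete blocks, are absorbed into the three $\psi_\alpha$-type summands $\exp(-(t\eta)^\alpha/A_i^\alpha)$. The last, Bernstein-type term arises by splitting each block sum at the truncation level $M\simeq(\log n)^{1/\alpha}$: the bounded parts are handled by the classical Bernstein inequality, whose variance proxy converges, via the regeneration formula $\sigma^2=\E_\theta s_1^2/\E_\theta\tau$, to the asymptotic variance \eqref{eq:sigmadef_new}, while the unbounded tails feed back into the $\psi_\alpha$ terms. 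The free parameter $\eta$ records the trade-off in this split between the genuinely Gaussian scale $(1+\eta)\sigma^2 n$ and the large-deviation corrections, which produces the dependence $A_4(\eta)\simeq 1/\eta$ and the $\eta$-dependent prefactor $2^{1+\eta/(2+\eta)}$. Assembling the four contributions gives \eqref{eq:main_est_geometric_erg_new} with all constants explicit in the drift data.
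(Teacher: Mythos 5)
Your overall route coincides with the paper's: Nummelin splitting, the decomposition into initial block, interior regeneration blocks and terminal block, $\psi_\alpha$-integrability of the blocks extracted from the geometric drift, then truncation at $M\simeq \ccc(\log n)^{1/\alpha}$ with Bernstein for the bounded parts (this is Theorem \ref{thm:geometric_ergodicity_old}, specialized in Theorem \ref{thm:main_geometric_new}, from which Theorem \ref{thm:A_new} follows by plugging in the drift bounds). In particular your product mechanism --- $\tau\in\psi_1$ times $(\max_{i<\tau}\log V(X_i))^s\in\psi_{1/s}$ combined by an Orlicz H\"older/Young inequality to force $\alpha=1/(s+1)$ --- is exactly the paper's ``regular drift'' argument of Section \ref{secRegularDrift} (Theorem \ref{thmcond2} with $\beta=1$, $\gamma=\alpha/(1-\alpha)=1/s$, the maximum controlled through the drift bound \eqref{war1} on $\E_x\sum_{k<\tau_C}\exp(|Z_k(f)|^\gamma/c^\gamma)$, made explicit in Proposition \ref{prop:geometric_erg_new}). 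Two points your sketch glosses over: the drift controls excursions to $C$, not to the atom $\theta$, so bounding $\cca,\ccb,\ccc$ also requires the split-chain transfer of Proposition \ref{pros1} (the geometric-trials/H\"older argument producing the factor $r^{1/\alpha}$ with $r$ from \eqref{wrona0}); and the paper deliberately does \emph{not} import the inequalities of \cite{Rad} but reproves the independent-case tools (Lemmas \ref{unbounded_part} and \ref{Laplace_bounded}, Proposition \ref{thm1}) so that every constant is an explicit formula, as the theorem asserts.

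The genuine gap is your treatment of the random number of blocks $N$. You propose that the events where $N$ deviates from $\approx\pi^\ast(\theta)n$ ``are absorbed into the three $\psi_\alpha$-type summands''. This cannot work as stated: conditioning on $\{N\le(1+\varepsilon)\pi^\ast(\theta)n\}$ leaves an error $\bar{\P}_{x^\ast}(N>(1+\varepsilon)\pi^\ast(\theta)n)\le e\exp(-\varepsilon^2 n/(144\pi^\ast(\theta)\mathbf{d}^2))$ (Lemma \ref{N_integr}), a quantity depending on $n$ but not on $t$; for $t$ large (e.g.\ $t\gg n^{1/\alpha}$) it dominates every $t$-dependent term in \eqref{eq:main_est_geometric_erg_new}, so no choice of $A_1,A_2,A_3$ depending only on the drift data can swallow it uniformly in $t>0$. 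This is precisely why the paper, in the single-functional, $m=1$ case (where the blocks are genuinely i.i.d., not merely one-dependent), replaces the crude conditioning by the optional-stopping argument of Lemma \ref{Laplace_bounded} (used in part (ii) of Proposition \ref{thm1}): an exponential martingale plus H\"older folds the fluctuation of $N$, quantified by $\|(N-(1+\varepsilon)\pi^\ast(\theta)n)_+\|_{\psi_1}\le 144\,\pi^\ast(\theta)^2\mathbf{d}^2\varepsilon^{-1}$, into the Bernstein denominator, which is the source of $A_4(\eta)\simeq 1/\eta$ and of the clean four-term bound. Your version would instead prove the weaker statement carrying an extra additive $\exp(-c\eta^2 n)$ term --- exactly the shape of the empirical-process bound, Theorem \ref{thm:B_new}, where this martingale trick is unavailable --- but not \eqref{eq:main_est_geometric_erg_new} as stated.
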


We also provide a counterpart of the above result for suprema of empirical processes (in the spirit of Talagrand's inequalities in the independent case).

\begin{theo}\label{thm:B_new}
Assume that $\{X_n\}_{n\ge0}$ is a Harris recurrent strongly aperiodic Markov chain on the space $(\mathcal{X},\mathcal{B})$ admitting a unique stationary measure $\pi$. Assume furthermore that conditions \eqref{small} with $m=1$ and \eqref{eq:geometric_ergodicity_drift_new} are satisfied.
Let finally $s > 0$ and consider a countable class $\mathcal{G}$ of measurable functions $g\colon \mathcal{X} \to \R$ such that for all $g \in \mathcal{G}$,
\begin{displaymath}
|g(x)| \le \kappa \Big(\log V(x)\Big)^s
\end{displaymath}
for some $\kappa \ge 0$. Set $\alpha = 1/(s+1)$.
Denote
\begin{displaymath}
Z= \sup_{g\in\mathcal{G}}|\sum_{i=0}^{n-1}(g(X_i) - \pi g)|
\end{displaymath}
and
\begin{displaymath}
\sigma^2 = \sup_{g\in\mathcal{G}} \lim_{n\to \infty} \frac{\Var(\sum_{i=0}^{n-1} g(X_n))}{n} = \sup_{g\in \mathcal{G}} \Big(\Var_\pi g(X_0) + 2\sum_{i=1}^\infty {\rm Cov}_\pi g(X_0)g(X_i)\Big).
\end{displaymath}

Then for every $\eta \in (0,1)$ and all $t > 0$,
\begin{align*}
\bar{\P}_{x^\ast}\Big(Z \ge (1+\eta) \E Z + t\Big) &\le \exp\Big(-\frac{t^2}{2(1+\eta) n\sigma^2}\Big) +
C\exp\Big(-\frac{t^\alpha}{2B_1^\alpha}\Big)
+ C\exp\Big(-\frac{t^\alpha}{B_2^\alpha}\Big)\\
&+C\exp\Big(-\frac{t^\alpha}{B_3^\alpha}\Big)
+C\exp\Big(-\frac{n}{B_4}\Big)+C\exp\Big(-\frac{t}{B_5\log n}\Big),
\end{align*}
where the constants $C, B_1,\ldots, B_5$ depend only on $\delta,V,\alpha,\lambda,b,\kappa,\eta$.
\end{theo}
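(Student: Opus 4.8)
The plan is to follow the regeneration strategy of the paper and reduce the supremum $Z$ to the supremum of an empirical process built from independent excursion blocks, to which a Talagrand--Bousquet type inequality can be applied. First, since the chain is strongly aperiodic ($m=1$), the splitting construction of Section~\ref{sect1} produces, after passing to the split chain (whence the measure $\bar{\P}_{x^\ast}$), an accessible atom and a sequence of regeneration times $0\le \tau_0<\tau_1<\tau_2<\cdots$. Decomposing each centered partial sum along these times gives block variables
\[
s_j(g)=\sum_{i=\tau_{j-1}}^{\tau_j-1}\bigl(g(X_i)-\pi g\bigr),
\]
which for $j\ge 1$ are i.i.d.\ in $j$ and satisfy $\E s_j(g)=0$ by the Kac formula. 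Writing $N_n$ for the number of completed blocks before time $n$, one has $Z\le \sup_{g\in\mathcal{G}}\bigl|\sum_{j=1}^{N_n}s_j(g)\bigr|$ up to the first (pre-regeneration) and last (incomplete) excursions, whose contribution is dominated by the scalar tail estimates already obtained for additive functionals and produces the purely subexponential summands $\exp(-t^\alpha/B_i^\alpha)$, in complete analogy with Theorem~\ref{thm:main_geometric_new}.

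Second, the essential analytic input, furnished by the drift-based computations of Section~\ref{section_drifts}, is that under \eqref{eq:geometric_ergodicity_drift_new} together with the logarithmic growth $|g|\le\kappa(\log V)^s$ the block size $\sup_{g\in\mathcal{G}}|s_j(g)|$ has a finite exponential Orlicz norm of order $\alpha=1/(s+1)$: the excursion length is $\psi_1$-integrable while the factor $(\log V)^s$ accumulated along an excursion degrades this to $\psi_\alpha$. The blocks are therefore unbounded but sub-$\alpha$-exponentially integrable, and since each is centered, $\Var\bigl(\sum_{j=1}^{N_n}s_j(g)\bigr)$ is, after the regeneration normalization $\E N_n\simeq n/\E(\tau_1-\tau_0)$, asymptotic to $n\sigma^2$ with $\sigma^2$ the weak variance appearing in the statement.

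Third, I would truncate each block at a level $M\simeq(\log n)^{1/\alpha}$ and split the estimate accordingly. On the truncated, now uniformly bounded, part I apply a Talagrand/Bousquet concentration inequality for suprema of empirical processes of independent summands (in the unbounded-friendly form of \cite{Rad}): this yields concentration of $\sup_{g}|\sum_{j}s_j(g)|$ around $(1+\eta)\E Z$ with the Gaussian term $\exp(-t^2/(2(1+\eta)n\sigma^2))$ and a linear Bernstein correction in which the truncation level enters, accounting for the term $\exp(-t/(B_5\log n))$. The discarded tail mass is controlled by a union bound over the $O(n)$ blocks using the $\psi_\alpha$ estimates, producing the remaining $\exp(-t^\alpha/B_i^\alpha)$ contributions; the factor $(1+\eta)$ multiplying both $\E Z$ and the variance is the standard price paid in such inequalities to absorb the discrepancy between the block-based mean and the true $\E Z$ (established by a symmetrization/contraction comparison) and the slack in the weak-variance proxy.

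Finally, to remove the conditioning on the random index $N_n$ I would bound the event that $N_n$ departs from its deterministic range $\simeq n/\E(\tau_1-\tau_0)$; by a large-deviation estimate for the i.i.d.\ excursion lengths this has probability at most $\exp(-n/B_4)$, and on its complement the conditional empirical-process bound applies with a fixed number of summands. The main obstacle I anticipate is precisely this interface between the random number of regenerations and the fixed-$n$ Talagrand inequality, compounded by the need to compare $\E\sup_{g}|\sum_{j}s_j(g)|$ with $\E Z$ under only $\psi_\alpha$ integrability and to choose the truncation level $M$ so that the bounded-part Talagrand term and the discarded-tail union bound balance; this balance is what fixes the logarithmic factor in $B_5\log n$ and ties all six summands of the bound together.
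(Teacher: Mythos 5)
Your architecture coincides with the paper's proof (Theorem \ref{Markov_emp_thm}, specialized via the drift bounds of Propositions \ref{prop:boundond_new} and \ref{prop:boundone_new}): regeneration decomposition $Z\le U_n(F)+\sup_f|V_n(f)|+W_n(F)$, $\psi_\alpha$ control of the first and last excursions through $\cca,\ccb$, truncation of the blocks at $M\simeq \ccc(\log n)^{1/\alpha}$ with the Klein--Rio inequality on the bounded part (Proposition \ref{unbounded_emp_ind}), and Lemma \ref{N_integr} to confine $N$ to $[0,(1+\varepsilon)\pi^\ast(\theta)n]$, producing the term $\exp(-n/B_4)$. However, there is a genuine gap at the step you yourself flag as the main obstacle: comparing $\E Z$ with the expectation of the block empirical process. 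The subgaussian term must be centered at $(1+\eta)\E Z$, whereas Klein--Rio centers at $(1+\varepsilon)\bar{\E}_{x^\ast}\|\sum_{i=1}^{n_2}s_{i-1}(f)\|_{\mathcal F}$ for the \emph{deterministic} length $n_2=\lfloor(1+\varepsilon)\pi^\ast(\theta)n\rfloor$; so one needs a lower bound on the expectation of the randomly stopped sum $\|\sum_{i=1}^{N}s_{i-1}(f)\|_{\mathcal F}$ by that of the longer fixed-length sum, up to a factor $(1+c\varepsilon)$ and additive terms. You propose to get this by ``a symmetrization/contraction comparison,'' but neither symmetrization nor the contraction principle compares a randomly stopped block sum with a fixed-length one -- they compare a process with its symmetrized or contracted version. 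The paper's Lemma \ref{expectation_estimate} (and Corollary \ref{expectation_estimate_cor}) does the job by a different, specific argument: exchangeability of the blocks plus Jensen's inequality (conditioning on $\sigma(\sum_{i\le n_2}s_{i-1}(f)\colon f\in\mathcal F)$ gives $\bar{\E}_{x^\ast}\|\sum_{i\le n_1+1}s_{i-1}(f)\|_{\mathcal F}\ge \frac{n_1+1}{n_2}\bar{\E}_{x^\ast}\|\sum_{i\le n_2}s_{i-1}(f)\|_{\mathcal F}$), Doob's optional sampling on $\{N\ge n_1\}$, and a Chebyshev bound on $(n_1-N)_+$. This is where the parameters ${\bf e}$, ${\bf d}$ and $\pi(F)$ enter, and it is also the source of the threshold $t\ge C(\varepsilon)$ in Theorem \ref{Markov_emp_thm} (absorbable into $C,B_1,\ldots,B_5$ for the present statement, but it must be tracked). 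Without this lemma or an equivalent substitute, your proof does not close.

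Two secondary soft spots. First, the discarded tail mass is a \emph{sum} of truncated pieces, and after truncation the bounded part must be recentered; a union bound over the $O(n)$ blocks controls only the maximum, not $\sum_j(|Y_j|+\E|Y_j|)$, nor does it show that the mean shift $n\E F(\xi_0)1_{F(\xi_0)>M}$ is at most $\varepsilon t$ -- the paper handles both through the Laplace-transform estimate of Lemma \ref{unbounded_part} (yielding the fixed factor $e^8$ rather than a factor $n$) together with the estimate $(1+\varepsilon)\E S_{\mathcal F}\ge(1+\varepsilon)\E S_{\mathcal F_1}-\varepsilon t$ inside Proposition \ref{unbounded_emp_ind}. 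Second, on the good event $\{N\le n_2\}$ you need the maximal form $\max_{k\le n_2}$ of the bounded-part inequality, since $N$ is random within $[0,n_2]$; Corollary \ref{KleinRio_cor} supplies exactly this via Doob's maximal inequality, so your ``fixed number of summands'' phrasing glosses a real but fixable detail.
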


\paragraph{Remarks} Since there are many concentration inequalities for additive functionals of Markov chains with all sorts of assumptions and sometimes it is difficult to compare their strength, we will now provide a few remarks concerning the above theorem as well as other results of the paper (even though their precise formulation is postponed to further sections), which should explain their strength and shortcomings.

1. As already mentioned, Theorem \ref{thm:A_new} is just a single explicit example, which can be recovered from our estimates for additive functionals. In the paper we consider a more general abstract setting in which similar exponential inequalities hold. From this setting one can obtain inequalities under other, more general drift conditions, guaranteeing subexponential convergence to the stationary distribution (such drift conditions have been considered in e.g. \cite{DFMS,DGM}). However, since the structure of the estimates on the counterparts of $A_1,\ldots,A_4$ in this inequalities may depend on the drift condition, we restrict to the most classical drift condition \eqref{eq:geometric_ergodicity_drift_new}.

2. An important feature of our results is that all the constants and parameters in our inequalities are given by explicit formulas. This allows for potential use of our inequalities in the MCMC setting to get quantitative results for various algorithm. In Sections \ref{secRegularDrift} and \ref{sect3} we illustrate it with a simple example of a Metropolis-Hastings algorithms on $\N$ and on $\R$.

Unfortunately, the dependence of constants on the parameters, especially on $\delta$, is rather bad. In certain situations one may `amplify' the parameter $\delta$ by considering a lazy version of the random walk, however in most situations the bounds we obtain by combining our inequalities with known estimates on the parameters $\delta,\lambda, b$ are not good enough to be applied in algorithmic practice. From this point of view our result may be considered the first step towards more practical estimates.

3. A fact important from the theoretical perspective is that the subgaussian coefficient in our inequality \eqref{eq:main_est_geometric_erg_new} is $(2+\eta)n\sigma^2$, where $\sigma^2$ is the variance of the limiting Gaussian variable in the CLT for
\begin{displaymath}
\frac{f(X_0)+\ldots+f(X_{n-1}) - n\pi f}{\sqrt{n}}
\end{displaymath}
(see \cite{MeynTwe,Num1,Che}).

Moreover one can easily obtain from our result a tail estimate of the form
\begin{displaymath}
\P(n^{-1/2}|\sum_{i=0}^{n-1} f(X_i) - n\pi f|\ge t) \le C\exp\Big(-\frac{t^2}{(2+\eta)\sigma^2}\Big) + C\exp\Big(-\frac{t^\alpha n^{\alpha/2}}{R_\eta a_{\alpha,f}\log n}\Big),
\end{displaymath}
where $C$ is a universal constant $R_\eta$ depends only on $\eta$ and $a_{f,\alpha}$ depends only on $f,\alpha$ and the parameters in \eqref{small} and \eqref{eq:geometric_ergodicity_drift_new}. Thus for large $n$ the estimate almost coincides with the tail of the limiting Gaussian variable. This is closely related to the moderate deviation principle (see \cite{MPR} for moderate deviation results for mixing sequences), which however is an asymptotic statement.

Similarly, the coefficient $\sigma^2$ in Theorem \ref{thm:B_new} corresponds to the weak variance of the limiting Gaussian process (provided it exists, see e.g. \cite{vdVW,BL,Rio2} for various criteria guaranteeing it) and thus corresponds to the Gaussian concentration inequality.

The fact that the constant in front of $\sigma^2$ can be made arbitrarily close to the optimal value $2$ may be important in strong limit theorems for various statistics involving additive functionals or their suprema as it is the case for independent summands.

4. Finally let us compare our results with some other estimates for additive functionals of Markov chains known in the literature.

As we already mentioned there are many results concerning exponential inequalities. A huge part of the literature is devoted to uniformly ergodic Markov chains (see e.g. \cite{Samson,KontoRam}). Methods designed to handle the uniformly ergodic case usually do not allow to deal with general geometrically or subgeometrically ergodic chains as the parameters of the inequalities grow too rapidly with the sample size (see  Section 3.5 of \cite{Rad} for a more thorough discussion).

In the Markov chains literature existing results have been mostly concerned with the case of bounded $f$. One could mention here e.g. \cite{DGM,Cle,Samson,Rad}. Our results can be seen as a generalization of \cite{Rad}. An advantage of our results with respect to the previous ones is that they work for unbounded functions $f$ and as already mentioned the dependence of constants on the parameters of the drift criterion is explicit. Also, contrary to the results in \cite{DGM,Cle} (which were based on the Fuk-Nagaev inequality), the logarithmic dependence on $n$ in our inequalities is optimal (see the discussion in \cite{Rad}).

Another approach, which is used in the Markov chain literature, mostly in the continuous time or discrete state-space case is based on spectral gap inequalities. Its advantage stems from the fact that it often allows to recover a precise Bernstein type inequality for bounded functions of the same form as in the independent case (see e.g. \cite{Lezaud,GGW}). We note however that the Markov chains we consider do not have to satisfy the spectral gap as in the non-reversible case it is not implied by geometric ergodicity (see \cite{KontoMeyn1}). Moreover, the examples in \cite{Rad} show that in the non-reversible case such an inequality cannot hold (which is a consequence of the already mentioned optimality of the factor $\log n$ in our estimates). Also, even in the reversible discrete time case, we are not aware of a Bernstein type inequality with the right subgaussian coefficient (meaning a universal constant multiplied by the limiting variance).

Other interesting results concerning concentration inequalities can be found in literature on mixing sequences (see \cite{MPR,Samson, Winte1,Winte2,Rio1}). The results obtained there usually are not directly comparable with ours as they are expressed in terms of different quantities and also often correspond rather to Azuma-Hoeffding inequalities than to the Bernstein inequality (meaning that the parameter $\sigma^2$ is not taken into account). We are aware of one exception, namely a very general result of \cite{MPR}. Since the inequality presented in \cite{MPR} works in a much more general setting than ours and is of a very similar form, we would like now to present a more detailed comparison and explained why, despite formal similarity, our results provides in certain cases additional information.

In \cite{MPR} the Authors consider a sequence of centered random variables $(X_i)_{i\in \Z}$ which is $\alpha$-mixing with the mixing coefficient $\alpha(n) \le \exp(-cn^{\gamma_1})$ (we refer to \cite{MPR} for the definition of the mixing coefficients and explain only that in the geometrically ergodic case we have $\gamma_1 = 1$) and such that $\|X_i\|_{\psi_{\gamma_2}} \le b$ for some $\gamma_2$ such that $\gamma := \frac{\gamma_1\gamma_2}{\gamma_1+\gamma_2} < 1$. For such random variables the Authors obtain an inequality of the form

\begin{align}\label{eq:MPR}
\P(\max_{j\le n}|X_1+\ldots+X_n| \ge t) \le& n\exp\Big(-\frac{t^\gamma}{C_1}\Big) + \exp\Big(-\frac{t^2}{C_2(1+nV)}\Big)\\
&+\exp\Big(-\frac{t^2}{C_3 n}\exp(\frac{t^{\gamma(1-\gamma)}}{C_4(\log t)^\gamma}\Big)\Big),\nonumber
\end{align}
where the constants $C_1,\ldots,C_4$ depend only on $b,c,\gamma_1,\gamma_2$ and
\begin{displaymath}
V = \sup_{M> 0}\sup_{i > 0}(\Var \varphi_M(X_i) + 2\sum_{j>i}|{\rm Cov}(\varphi_M(X_j),\varphi_M(X_i))|),
\end{displaymath}
with $\varphi_M(x) = (x\wedge M)\vee(-M)$.

Before we proceed with a comparison, let us explain that the parameter $\gamma$ above, although defined in a different language than $\alpha$ in Theorem \ref{thm:A_new}, agrees with it (this can be seen by looking at the general case considered in Section \ref{secRegularDrift}, where analogous formulas appear).

For comparison let us disregard the fact that the above inequality deals with maxima of partial sums (it can be obtained from a version without maxima with a recent result from \cite{KM}).

As we already mentioned, the above inequality is much more general than ours, however the sets of assumptions are slightly different, namely ours is expressed in terms of drift parameters, which are quite common in MCMC practice, while \eqref{eq:MPR} is more abstract, as it deals with general mixing sequences. From this point of view, verification of our assumptions is simpler and more straightforward. Moreover, in our result the constants are explicit. Finally, the subgaussian coefficient $V$ in \eqref{eq:MPR} is not the variance of the limiting Gaussian distribution, contrary to our $\sigma^2$ (although there are some formal similarities between the formula for $V$ and the expansion of $\sigma^2$ given in \eqref{eq:sigmadef_new}). Also the constant in front of the subgaussian coefficient in \eqref{eq:MPR} is not universal, but depends on the parameters $\gamma_i$.

Thus, while \eqref{eq:MPR} is more general and has many theoretical applications (e.g. the moderate deviation principle obtained in \cite{MPR}), our inequality seems potentially more suited e.g. for the algorithmic applications we have in mind and is more closely related to the CLT for additive functionals.

Let us also mention that using the drift criteria from Section \ref{sect_milt_dift} we can obtain an inequality with $\alpha = 1$, whereas \eqref{eq:MPR} deals with $\gamma = \alpha < 1$.

5. It would be interesting to extend Theorems \ref{thm:A_new} and \ref{thm:B_new} to Markov chains which are not necessarily strongly aperiodic (i.e. to chains which satisfy \eqref{small} only with some $m > 1$). Our Theorem \ref{thm2} provides an exponential inequality also in this setting, however the subgaussian coefficient in the general case is no longer the variance of the limiting Gaussian distribution. To our best knowledge the problem of providing a Berstein type inequality with the `right' subgaussian coefficient for $m > 1$ remains open even in the case of bounded functions.

\paragraph{}
The organization of the paper is as follows: in Section
\ref{sect1} we discuss the Markov chain theory and a variety of
integrability conditions we need to prove exponential
concentration and in Section \ref{section_drifts} we state their characterizations in terms of
drift conditions; in Section \ref{sect2} we prove the main tool
which is the exponential inequality for almost independent random
variables; finally in Section
\ref{sect3} previous results are combined to prove exponential
concentration inequalities for Markov chains.

\paragraph{Acknowledgement} We would like to thank Krzysztof {\L}atuszy{\'n}ski for interesting discussions and providing us with references to the statistical literature.

\section{The exponentially fast decaying tails of Markov Chain excursions}\label{sect1}

\subsection{Notation and preliminaries}  Since we are
interested in exponential inequalities let us first recall the
definition of exponential Orlicz norms,
$$
\|Y\|_{\psi_{\alpha},\mu}=\inf\Big\{c>0:\;\E_{\mu}\exp(\frac{|Y|^{\alpha}}{c^{\alpha}})\ls 2\Big\}.
$$
Note that the subscript above indicates the measure with respect to which the Orlicz norm is taken. Sometimes, when the underlying measure is clear from the context or when it is not relevant we will also write $\|\cdot\|_{\psi_\alpha}$.

Let us now briefly recall the regeneration method. We outline only
the main points needed for our further applications and refer to
\cite{Num1,MeynTwe} for an extensive exposition.

\smallskip

\noindent The split chain construction is based on introducing
variables $Y_k\in\{0,1\}$ that denote the level of the split
$m$-skeleton at time $km$.  Let $\bar{\bf
X}^m=\{\bar{X}^m_k:\;k\in \Z_{+}\}=\{(X_{km},Y_k):\;k\in Z_{+}\}$.
The new chain is defined by conditional probabilities
\begin{align*}
&\bar{\P}(Y_k=1,X_{km+1}\in dx_1,...,X_{(k+1)m-1}\in dx_{m-1},X_{(k+1)m}\in dy|\ccF^X_{km},\ccF^Y_{k-1},X_{km}=x)=\\
&=\bar{\P}(Y_k=1,X_{1}\in dx_1,...,X_{m-1}\in dx_{m-1},X_{m}\in dy|X_{0}=x)=\\
&=1_{C}(x)\frac{\delta
\nu(dy)}{\P^m(x,dy)}\P(x,dx_1)...\P(x_{m-1},dy)
\end{align*}
and
\begin{align*}
&\bar{\P}(Y_k=0,X_{km+1}\in dx_1,...,X_{(k+1)m-1}\in dx_{m-1},X_{(k+1)m}\in dy|\ccF^X_{km},\ccF^Y_{k-1},X_{km}=x)=\\
&=\bar{\P}(Y_k=0,X_{1}\in dx_1,...,X_{m-1}\in dx_{m-1},X_{m}\in dy|X_{0}=x)=\\
&=\Big(1- 1_{C}(x)\frac{\delta
\nu(dy)}{\P^m(x,dy)}\Big)\P(x,dx_1)...\P(x_{m-1},dy),
\end{align*}
where $\mathcal{F}_{km}^X = \sigma((X_i)_{i\le km})$ and
$\mathcal{F}^Y_{k-1} = \sigma((Y_i)_{i\le k-1})$. The above
condition means simply that we arrange the conditional
distribution of the intermediate parts of the chain so that they
fit to the split $m$-skeleton. Note that
$$
\bar{\P}(Y_k=1,X_{(k+1)m}\in dy|\ccF^X_{km},\ccF^Y_{k-1},X_{km}=x)=1_C(x)\delta\nu(dy),
$$
and consequently $\bar{\P}( Y_k=1|\ccF^X_{km},\ccF^Y_{k-1},
X_{km}=x)=\delta 1_C(x)$ and $\bar{\P}(X_{(k+1)m}\in
dy|\ccF^{X}_{km},\ccF^Y_{k}, Y_k=1)=\nu(dy)$. Therefore whenever
$X_{km}$ enters $C$, with probability $\delta$ we decide on
$Y_k=1$, and if so distribute $X_{(k+1)m}$ according to the
measure $\nu$.

One also checks easily that $X_i$ is a Markov chain with
transition function $P$.

For each initial measure $\mu$ we denote by $\mu^{\ast}$ the
measure on $\ccX\times\{0,1\}$ such that
$\mu^{\ast}(A\times\{0\})=\mu(A\cap C)(1-\delta)+\mu(A\cap C^{c})$
and $\mu^{\ast}(A\times \{1\})=\mu(A\cap C)\delta$. We continue
convention $\bar{\P}_{x^{\ast}}=\bar{\P}_{\delta_{x}^{\ast}}$, where $\delta_x$ stands for the Dirac mass at $x$. The
clear consequence of the construction is that
$$
\{X_i,Y_{j}:\;i\ls km,j\ls k \}\;\;\mbox{is independent of}\;\;\{X_i,Y_j:\;i\gs (k+1)m,j\gs k+1 \}
$$
under the condition that $Y_k=1$. Moreover $\{X_i,Y_j:\;i\gs (k+1)m,j\gs k+1 \}$  has the same distribution as
$\bar{\P}_{\nu^{\ast}}$ process $\{X_i,Y_j,\;i,j\gs 0\}$, where
$$
\bar{\P}_{\nu^{\ast}}(Y_0=1,X_0\in dx)=\delta 1_C(x)\nu(dx).
$$

Therefore we can treat $\theta=C\times\{1\}$ like an atom of the
chain $\bar{\bf X}^m$. Our approach to deviation inequalities will
be based on the decomposition of the sum $\sum^{n-1}_{i=0}f(X_i)$ into
almost independent excursions between consecutive return times to
$\theta$.

Let $\sigma=\sigma(0)=\min\{k\gs 0:\;Y_k=1\}$ and
$$
\sigma(i)=\min\{k>\sigma(i-1):\;Y_k=1\},\;\;k>0
$$
and in the same way $\tau=\tau(1)=\min\{k\gs 1:\;Y_k=1\}$ and
$$
\tau(i)=\min\{k>\tau(i-1):Y_k=1\},\;\; k>1.
$$
For each $i$ we define
$$
s_i(f)=\sum^{m\sigma(i+1)+m-1}_{j=m(\sigma(i)+1)}f(X_j)=\sum^{\sigma(i+1)}_{j=\sigma(i)+1}Z_j(f),
$$
where $Z_j(f)=\sum^{m-1}_{k=0}f(X_{jm+k})$. The main result on the excursions is the following (\cite[Theorem 17.3.1]{MeynTwe})
\bt
The two collections of random variables
$$
\{s_i(f):\;0\ls i\ls k-2\},\;\;\{s_i(f):\;i\gs k\}
$$
are independent for any $k\gs 2$. The distribution of $s_i(f)$ is for any $i$ equal to the $\bar{\P}_{\theta}$ distribution of
$\sum^{\tau m+(m-1)}_{k=m}f(X_k)$ which is equal to $\bar{\P}_{\nu^{\ast}}$ distribution of $\sum^{\sigma m+(m-1)}_{k=0}f(X_k)$.
Moreover the common mean of $s_i(f)$ may be expressed as $\bar{\E}s_i(f)=\delta^{-1}\pi(C)^{-1}m\int f d\pi$.
\et
Therefore if $\pi(f)=0$ then also $\bar{\E}s_i(f)=0$. We use the above result to decompose the path into three parts
\ba
\label{decomp} && \Big|\sum^{n-1}_{k=0}f(X_k)\Big|\le \Big|\sum^{\min(m\sigma+(m-1),n-1)}_{k=0}f(X_k)\Big|+\Big|\sum^{N}_{i=1}s_{i-1}(f)\Big|+\Big|1_{N >0}\sum^{m\sigma(N)+(m-1)}_{k=n}f(X_k)\Big|\nonumber \\
&&=:U_n(f)+V_n(f)+W_n(f),\ea and
\begin{align}\label{N_def}
N=\inf\{i\gs
0:\;m\sigma(i)+(m-1)\gs n-1\}.
\end{align} Note that $N$ is a stopping time
with respect to the filtration $\ccF_i =
\sigma(s_0,\ldots,s_{i-1},\sigma(0),\ldots,\sigma(i))$.

In order to prove exponential inequalities we need to provide
appropriate integrability conditions for all the summand above.
They will be expressed in terms of the following exponential
norms:
\begin{align}\label{definition_abc}
& \cca=\|\sum^{\sigma}_{i=0}|Z_i(f)|\|_{\psi_{\alpha},\bar{\P}_{x^{\ast}}}<\infty;\nonumber\\
& \ccb=\|\sum^{\sigma}_{i=0}|Z_i(f)|\|_{\psi_{\alpha},\bar{\P}_{\pi^{\ast}}}<\infty;\nonumber\\
& \ccc=\|s_i(f)\|_{\psi_{\alpha},\bar{\P}}<\infty.
\end{align}

The above quantities are rather troublesome to use (as expressed in terms of the split chain on the enlarged probability space), therefore we
prefer their counterparts expressed in terms of the original
chain, i.e. without referring to the auxiliary variables $Y_i$.
They are
\begin{align*}
& \ccA=\|\sum^{\tau_C-1}_{i=0}| Z_i(f)|\|_{\psi_{\alpha},\P_x};\;\; \ccB=\|\sum^{\tau_C-1}_{i=0} |Z_i(f)|\|_{\psi_{\alpha},\P_{\pi}};\\
& \ccC=\sup_{x\in C}\|\sum^{\tau_C-1}_{i=1}Z_i(f)\|_{\psi_{\alpha},\P_x};\;\;\ccD=\sup_{x\in C}\|Z_0(f)\|_{\psi_{\alpha},\P_x},
\end{align*}
where $\tau_C = \tau_C(1)= \inf\{k\ge 1\colon X_{km} \in C\}$. For
later use define also $\tau_C(i) = \inf\{k > \tau_C(i-1)\colon
X_{km} \in C\}$ for $i> 1$.

Note that for $m=1$, $\ccD = \sup_{x\in C} |f(x)|$.

 Let $r\gs 1$ be the unique solution of
\be\label{wrona0}
2^{\frac{1}{r}}\delta^{1-\frac{1}{r}}+2^{1+\frac{1}{r}}(1-\delta)^{1-\frac{1}{r}}=2
\ee
(recall that $\delta$ is the
number appearing in the minorization condition (\ref{small})).

In particular if $\delta=1$, then $r=1$. Moreover by the concavity of $x^{1-\frac{1}{r}}$  and monotonicity of the left hand side above in $r \in [1,\infty)$ we get
$r\ls \log(\frac{6}{2-\delta})/\log(\frac{2}{2-\delta})$.

The following proposition provides a comparison between
$\cca,\ccb,\ccc$ and $\ccA,\ccB,\ccC,\ccD$.

\begin{prop}\label{pros1}In the setting described above with $\alpha \in (0,1]$ the following inequalities hold:
\begin{align*}\label{pro1}
& \cca\ls  r^{\frac{1}{\alpha}}((\max\{\ccA,\ccC\})^{\alpha}+\ccD^{\alpha})^{\frac{1}{\alpha}};\\
& \ccb \ls r^{\frac{1}{\alpha}}((\max\{\ccB,\ccC\})^{\alpha}+\ccD^{\alpha})^{\frac{1}{\alpha}};\\
& \ccc \ls r^{\frac{1}{\alpha}}(\ccC^{\alpha}+\ccD^{\alpha})^{\frac{1}{\alpha}}.
\end{align*}
\end{prop}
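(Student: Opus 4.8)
The plan is to realise the split-chain sum $\sum_{i=0}^{\sigma}|Z_i(f)|$, together with its counterpart started from the atom that defines $\ccc$, as a geometric concatenation of excursions of the \emph{original} chain between consecutive visits of the $m$-skeleton to the small set $C$, and then to control the resulting random sum by an Orlicz bound in which the constant $r$ enters through exactly \eqref{wrona0}. Two elementary facts drive the argument. First, for $\alpha\in(0,1]$ the map $t\mapsto t^\alpha$ is subadditive, whence $|\sum_j x_j|^\alpha\le\sum_j|x_j|^\alpha$; applying H\"{o}lder's inequality to $\prod_j\exp(|x_j|^\alpha/c^\alpha)$ then yields the quasi-triangle inequality $\|X+Y\|_{\psi_\alpha}^\alpha\le\|X\|_{\psi_\alpha}^\alpha+\|Y\|_{\psi_\alpha}^\alpha$. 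This is precisely what produces the shape $(\cdot^\alpha+\ccD^\alpha)^{1/\alpha}$, since inside each excursion I will peel off the single block $Z_0$ sitting at a visit to $C$ (controlled by $\ccD$) from the intermediate blocks (controlled by $\ccC$, or by $\ccA$, $\ccB$ on the first excursion). Second, in the splitting construction every visit of the $m$-skeleton to $C$ carries an independent Bernoulli$(\delta)$ flag deciding whether the atom $\theta$ is entered, so the number of excursions performed up to and including the one that reaches $\theta$ is geometric with parameter $\delta$.

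I would first fix the decomposition. Writing $T_0<T_1<\cdots$ for the successive visit times of the $m$-skeleton to $C$ and $\sigma=T_J$ for the first one carrying the flag $Y=1$, I partition the blocks $Z_0,\dots,Z_\sigma$ into the $G=J+1$ pieces $\{0,\dots,T_0\}$ and $\{T_{\ell-1}+1,\dots,T_\ell\}$ for $1\le\ell\le J$. Each such piece consists of intermediate blocks followed by the block sitting at its terminal $C$-visit, so by the quasi-triangle inequality its $\psi_\alpha$-norm is at most $B:=\big((\max\{\ccA,\ccC\})^\alpha+\ccD^\alpha\big)^{1/\alpha}$ (with $\ccB$ in place of $\ccA$ under $\P_\pi$, and with $\ccC$ alone replacing $\max\{\ccA,\ccC\}$ in the $\ccc$ case, where every piece starts inside $C$). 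The strong Markov property at the visit times makes these pieces independent conditionally on their entry points into $C$, and the suprema over $x\in C$ in the definitions of $\ccC,\ccD$ let me bound each piece uniformly by the worst such entry point.

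The core is then a one-step recursion for $M:=\sup_{x\in C}\bar{\E}_x\exp(|S|^\alpha/c^\alpha)$, where $S$ is the full excursion sum and $c^\alpha=rB^\alpha$. Conditioning on whether the visit ending the first piece carries the flag, either we stop (probability $\delta$) and $S$ equals that first piece $\zeta$, or we continue (probability $1-\delta$) and $S=\zeta+S'$ with $S'$ an independent fresh excursion from a new point of $C$. Using subadditivity on the continuation event and then H\"{o}lder's inequality with exponent $r$ to detach the Bernoulli flag from the block it multiplies, together with $\big(\bar{\E}\exp(|\zeta|^\alpha/B^\alpha)\big)^{1/r}\le 2^{1/r}$, I obtain
\begin{displaymath}
M\le 2^{1/r}\delta^{1-1/r}+2^{1/r}(1-\delta)^{1-1/r}M.
\end{displaymath}
Solving for $M$ and requiring $M\le 2$ is exactly the demand $2^{1/r}\delta^{1-1/r}+2^{1+1/r}(1-\delta)^{1-1/r}\le 2$, i.e.\ \eqref{wrona0}; hence with the $r$ defined there one gets $\bar{\E}\exp(|S|^\alpha/(rB^\alpha))\le 2$, that is $\|S\|_{\psi_\alpha}\le r^{1/\alpha}B$. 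Applying this with the first piece started from $\delta_x^\ast$, from $\pi^\ast$, or from the atom, and the subsequent pieces governed by the $\sup_{x\in C}$ recursion, yields the three stated inequalities.

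I expect the main obstacle to be the dependence between the flag and the block it terminates: for $m>1$ the intermediate coordinates of a block are coupled to the flag through the splitting kernel, so the stop/continue decision is \emph{not} independent of the piece it ends. This is exactly why H\"{o}lder's inequality, rather than plain independence, must be invoked, and it is what forces the particular combination of powers in \eqref{wrona0}. The rest is bookkeeping: justifying the conditional independence and the uniform-in-$x\in C$ bound at each regeneration, and checking that the first piece (started from an arbitrary $x$, hence governed by $\ccA$ or $\ccB$ rather than $\ccC$, with a forced continuation when $x\notin C$) fits the same template with the same constant $B$.
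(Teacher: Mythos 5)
Your proposal reproduces the paper's own argument in all essentials: the same partition of the excursion into segments between consecutive $C$-visits of the $m$-skeleton, the same geometric Bernoulli($\delta$) stopping structure, the same $\psi_\alpha$-subadditivity to merge the $\ccC$- and $\ccD$-controlled parts into the single constant $B$, a H\"older application with exponent $r$ to pay for the flag--block coupling, and the defining equation \eqref{wrona0} to force the exponential moment down to $2$. Your recursion $M\le 2^{1/r}\delta^{1-1/r}+2^{1/r}(1-\delta)^{1-1/r}M$ is exactly the fixed-point form of the geometric series that the paper sums explicitly, and it produces identical constants; likewise your treatment of $\cca,\ccb$ via a first segment governed by $\ccA$ resp.\ $\ccB$ matches the paper's general statement \eqref{general_ineq_aux} with $M_{\delta_x}\le \ccA$, $M_\pi\le\ccB$.

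There is, however, one bookkeeping point where your sketch, taken literally, does not close. You attach the terminal block and its flag to the \emph{end} of each piece, so each flag straddles two pieces: in the splitting construction the flag $Y_k$ is drawn jointly with the whole bridge $(X_{km+1},\ldots,X_{(k+1)m})$, hence besides tilting the terminal block of the piece it ends, it also tilts the entry state of the \emph{next} piece. Consequently, on the continuation event, $S'$ is not ``an independent fresh excursion from a new point of $C$'' with the same constant $M$ (the next segment starts from a point distributed according to the flag-$0$-tilted kernel, not from a $C$-point under the ordinary law), and the unconditional bound $\bar{\E}\exp(|\zeta|^\alpha/B^\alpha)\le 2$ is invoked where the relevant law of $\zeta$ is flag-conditioned at both of its ends; a single H\"older with exponent $r$ can absorb only one of the two indicators, and detaching the second pushes an $r$-th power onto the future factor, which would cascade along the recursion. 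The paper's proof avoids this with the opposite grouping: each piece consists of the block \emph{at} a $C$-visit together with the following intermediate blocks, so every piece is conditioned on exactly one flag (the one at its start) and has entry state in $C$; the flag-conditioned expectations $X,Y,V,W$ are then converted to ordinary-chain expectations from points of $C$ --- exactly what $\ccC$ and $\ccD$ control --- via the mixture identities $(1-\delta)X+\delta Y\le 2$, $(1-\delta)V\le 2$, $\delta W\le 2$ of \eqref{wrona2} and \eqref{wrona1}, and the within-piece H\"older with exponents $pr,qr$ yields precisely your factors $2^{1/r}\delta^{1-1/r}$ and $2^{1/r}(1-\delta)^{1-1/r}$. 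After this regrouping your recursion is correct as written; the rest of your plan is sound and coincides with the paper's proof.
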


\begin{proof}
Recall that distributions of $s_i(f)$ are the same for all $i$ and
equal to $\nu^{\ast}$ distribution of $\sum^{\sigma}_{k=0}Z_k(f)$. Let $a \in C$ be an arbitrary point. We will write $\bar{\E}_{a,i}$ ($i=0,1$) to denote the conditional expectation (on the enlarged probability space) given $X_0 = a, Y_ 0 = i$. In particular by the construction of the split process the distribution of $(X_m,X_{m+1},\ldots)$ under $\bar{\E}_{a,1}$ is independent of $a \in C$ (and equal to the $\bar{\E}_{\nu^\ast}$ distribution of $(X_0,X_1,\ldots)$). Thus by standard conditioning arguments, the following inequalities hold for any $c >0$
\begin{align*}
& \bar{\E}\exp(c^{-\alpha}|s_i(f)|^{\alpha})\ls \\
&\ls  \bar{\E}_{a,1}\exp(c^{-\alpha}|\sum^{\tau_C-1}_{j=1}Z_j(f)|^{\alpha})\\
&\phantom{aa}\times
\sum^{\infty}_{k=1}[(1-\delta)\sup_{x\in C}\bar{\E}_{x,0}\exp(c^{-\alpha}|\sum^{\tau_C-1}_{j=0}Z_j(f)|^{\alpha})]^{k-1}\sup_{x\in C}\delta\bar{\E}_{x,1}\exp(c^{-\alpha}|Z_0(f)|^\alpha).
\end{align*}
Therefore (for $\delta$ sufficiently small)
\begin{align*}
& \bar{\E}\exp(c^{-\alpha}|s_i(f)|^{\alpha})\ls \\
&\ls \frac{\delta\bar{\E}_{a,1}\exp(c^{-\alpha}|\sum^{\tau_C-1}_{j=1}Z_j(f)|^{\alpha})\sup_{x\in C}\bar{\E}_{x,1}\exp(c^{-\alpha}|Z_0(f)|^{\alpha})}{1-(1-\delta)\sup_{x\in C}\bar{\E}_{x,0}\exp(c^{-\alpha}|\sum^{\tau_C-1}_{j=0}Z_j(f)|^{\alpha})}.
\end{align*}
Set $c=r^{\frac{1}{\alpha}}(\ccC^{\alpha}+\ccD^{\alpha})^{\frac{1}{\alpha}}$.
Let $p^{-1}=\frac{\ccC^{\alpha}}{\ccC^{\alpha}+\ccD^{\alpha}}$
and $q^{-1}=\frac{\ccD^{\alpha}}{\ccC^{\alpha}+\ccD^{\alpha}}$. Recall that $r \ge 1$. By H\"{o}lder's inequality
\begin{align*}
&\sup_{x\in C}\bar{\E}_{x,0}\exp(c^{-\alpha}|\sum^{\tau_C-1}_{j=0}Z_j(f)|^{\alpha})\\
&\phantom{aaa}\ls  (\sup_{x\in C}\bar{\E}_{x,0}\exp(\ccD^{-\alpha}|Z_0(f)|^{\alpha}))^{\frac{1}{rq}}
(\sup_{x\in C}\bar{\E}_{x,0}\exp(\ccC^{-\alpha}|\sum^{\tau_C-1}_{j=1}Z_j(f)|^{\alpha}))^{\frac{1}{rp}},\\
&\sup_{x\in C}\bar{\E}_{x,1}\exp(c^{-\alpha}|Z_0(f)|^{\alpha})\ls (\sup_{x\in C}\bar{\E}_{x,1}\exp(\ccD^{-\alpha}|Z_0(f)|^{\alpha}))^{\frac{1}{qr}}
\end{align*}
and
$$
\bar{\E}_{a,1}\exp(c^{-\alpha}|\sum^{\tau_C-1}_{j=1}Z_j(f)|^{\alpha})\ls (\bar{\E}_{a,1}\exp(\ccC^{-\alpha}|\sum^{\tau_C-1}_{j=1}Z_j(f)|^{\alpha}))^{\frac{1}{pr}}.
$$
Define
$$
X=\sup_{x\in C}\bar{\E}_{x,0}\exp(\ccC^{-\alpha}|\sum^{\tau_C-1}_{j=1}Z_j(f)|^{\alpha}),\;\;Y=\bar{\E}_{a,1}\exp(\ccC^{-\alpha}|\sum^{\tau_C-1}_{j=1}Z_j(f)|^{\alpha})
$$
and
\begin{displaymath}
V=\sup_{x\in C}\bar{\E}_{x,0}\exp(\ccD^{-\alpha}|Z_0(f)|^{\alpha}),\;\;W=\sup_{x\in C}\bar{\E}_{x,1} \exp(\ccD^{-\alpha}|Z_0(f)|^{\alpha}).
\end{displaymath}
Using the above inequalities we get
$$
\bar{\E}\exp(c^{-\alpha}|s_i(f)|^{\alpha})\ls\frac{\delta Y^{\frac{1}{pr}}W^{\frac{1}{qr}}}{1-(1-\delta)X^{\frac{1}{pr}}V^{\frac{1}{qr}}}.
$$
By the definition of $\ccC, \ccD$ and the split chain construction
\be\label{wrona2}
(1-\delta)X+\delta Y= \sup_{x\in C} \E_x \exp(\ccC^{-\alpha}|\sum^{\tau_C-1}_{j=1}Z_j(f)|^{\alpha})\ls 2
\ee
and
\begin{align}\label{wrona1}
(1-\delta)V\ls \sup_{x\in C}\E_x \exp(\ccD^{-\alpha}|Z_0(f)|^{\alpha})\ls 2\;\;\mbox{and}\;\;\delta W\ls \sup_{x\in C}\E_x \exp(\ccD^{-\alpha}|Z_0(f)|^{\alpha})\ls 2.
\end{align}
Thus by the definition of $r$,
$$
\bar{\E}\exp(c^{-\alpha}|s_i(f)|^{\alpha})\ls \frac{\delta^{1-\frac{1}{r}}2^{\frac{1}{r}}}{1-(1-\delta)^{1-\frac{1}{r}}2^{\frac{1}{r}}} = 2,
$$
which proves the required bound on $\ccc$.

\noindent We will now estimate  $\cca$ and $\ccb$. In fact we will prove a more general statement. Consider any probability measure $\mu$ on $\mathcal{X}$ and denote $M_\mu = \|\sum_{i=0}^{\sigma_C-1} |Z_i(f)|\|_{\psi_\alpha,\bar{\P}_{\mu^\ast}}$, where $\sigma_C = \inf\{i\ge 0\colon X_{im} \in C\}$.
 Note that $M_\mu \le \|\sum_{i=0}^{\tau_C-1} |Z_i(f)|\|_{\psi_\alpha,\P_{\mu}}$. In particular $M_\pi \le \ccB$ and $M_{\delta_x} \le \ccA$. Thus to end the proof it is enough to show that
 \begin{align}\label{general_ineq_aux}
 \| \sum_{i=0}^\sigma |Z_i(f)|\|_{\psi_\alpha,\bar{\P}_{\mu^\ast}} \le r^{1/\alpha}((\max\{M_\mu,\ccC\})^\alpha + \ccD^\alpha)^{1/\alpha}.
 \end{align}

  Arguing as above we obtain that for any $c > 0$,
\begin{align*}
\bar{\E}_{\mu^{\ast}}\exp(c^{-\alpha}|s_0(f)|^{\alpha})\ls
\frac{\delta\E_\mu \exp(c^{-\alpha}(\sum_{i=0}^{\sigma_C-1}|Z_i(f)|)^\alpha)\sup_{y\in C}\bar{\E}_{y,1}\exp(c^{-\alpha}|Z_0(f)|^{\alpha})}{1-(1-\delta)\sup_{y\in C}\bar{\E}_{y,0}\exp(c^{-\alpha}(\sum^{\tau_C-1}_{j=0}|Z_j(f)|)^{\alpha})}
\end{align*}
Set $c=r^{\frac{1}{\alpha}}((\max\{M_\mu,\ccC\})^{\alpha}+\ccD^{\alpha})^{\frac{1}{\alpha}}$.
Let $p^{-1}=\frac{(\max\{M_\mu,\ccC\})^{\alpha}}{(\max\{M_\mu,\ccC\})^{\alpha}+\ccD^{\alpha}}$
and $q^{-1}=\frac{\ccD^{\alpha}}{(\max\{M_\mu,\ccC\})^{\alpha}+\ccD^{\alpha}}$.
Applying the notation of $X,V,W$ and H\"{o}lder's inequality we get
\begin{align*}
\bar{\E}_{\mu^{\ast}}\exp(c^{-\alpha}|s_0(f)|^{\alpha})\ls \frac{\delta (\E_\mu \exp(M_\mu^{-\alpha}(\sum_{i=0}^{\sigma_C-1}|Z_i(f)|)^\alpha))^{\frac{1}{pr}}W^{\frac{1}{qr}}}{1 - (1-\delta)X^{\frac{1}{pr}}V^{\frac{1}{qr}}}
\end{align*}
and thus by (\ref{wrona2}) and (\ref{wrona1}) we deduce that
$$
\bar{\E}_{\mu^{\ast}}\exp(c^{-\alpha}|s_0(f)|^{\alpha})\ls\frac{\delta^{1-\frac{1}{qr}}2^{\frac{1}{pr}}2^{\frac{1}{qr}}}{1-(1-\delta)^{1-\frac{1}{r}}2^{\frac{1}{r}}}
\ls\frac{\delta^{1-\frac{1}{r}}2^{\frac{1}{r}}}{1-(1-\delta)^{1-\frac{1}{r}}2^{\frac{1}{r}}} = 2
$$
by the definition of $r$. This ends the proof of (\ref{general_ineq_aux}).
\end{proof}

\section{Drift conditions\label{section_drifts}}
Our next goal is to provide conditions guaranteeing that
$\ccA,\ccB,\ccC,\ccD$ are finite, which via Proposition \ref{pros1} and
inequalities for independent summands will allow us to control the
quantities $U_n(f),V_n(f),W_n(f)$ in (\ref{decomp}).

One of the standard tools that has proved useful in the analysis
of integrability properties for the excursions of Markov chains is
drift conditions.

Below we consider two types of drift criteria. The first one is the multiplicative drift condition
introduced in \cite{MeynKos1,MeynKos2} to deal with pure
exponential integrability (i.e. with $\alpha = 1$). It is known \cite{MeynKos2} that in the case of $m=1$ this condition is equivalent
to exponential integrability (finiteness of $\psi_1$ norms) of $\sum_{i=0}^{\tau_C-1} f(X_i)$. Below we analyze the drift condition expressed for the $m$-skeleton and properly modified function, obtaining sufficient conditions for the finiteness of the parameters $\ccA,\ccB,\ccC,\ccD$.  We also show that the multiplicative drift condition is in a sense a minimal requirement for proving exponential integrability of the excursion, in particular obtaining good constants in the equivalence proved in \cite{MeynKos2}.

The multiplicative drift condition, although important from theoretical point of view, is of limited use in applications, as it is difficult to verify when compared to classical drift criteria used to obtain integrability of the regeneration time. Therefore we subsequently analyze what integrability properties of the excursion can be obtained with such simplified drift conditions. We show that by assuming integrability conditions on the regeneration time and a simple drift condition on the function $f$, one can still obtain $\psi_\alpha$ integrability, however with $\alpha < 1$. This still allows to obtain meaningful exponential inequalities, which for moderate values of $t$ agree with the classical Bernstein's bound.

\subsection{Multiplicative geometric drift condition\label{sect_milt_dift}}
Observe that
for any initial measure  $\nu$  and $\alpha\in(0,1]$,
\be\label{cond2} \|\sum^{\tau_C-1}_{k=0}
Z_k(f)\|_{\psi_{\alpha},\nu}\le
\|\sum^{\tau_C-1}_{k=0}|Z_k(f)|^{\alpha}\|^{\frac{1}{\alpha}}_{\psi_{1},\nu}.
\ee

In particular, in order to bound $\cca,\ccb,\ccc$ it suffices to
control the usual exponential $\psi_1$ norms of
$\sum^{\tau_C-1}_{k=0}|Z_k(f)|^{\alpha}$. Let thus denote
$g(x)=\log \E_x \exp(2c^{-1}|Z_0(f)|^{\alpha})$ and assume that for
some $c$ it verifies the multiplicative geometric drift condition
from \cite{MeynKos1,MeynKos2},  i.e. that there exists a function
$V:\ccX\ra \R_{+}$ and constants $b\gs 0,K>0$ such that
\be\label{cond3} \exp(-V(x))P^m(\exp(V))(x)\ls \exp(-g(x)+b
1_{C}(x)), \ee and $V(x)\ls K$ for $x\in C$.

The drawback of the drift condition (\ref{cond3}) in comparison
with the usual drift criteria is that in practice its direct
verification is difficult.
At the same time it turns out that it
is in fact equivalent to the existence of $c<\infty$ such that
\be\label{cond2a}
\sup_{x\in
C}\E_x\exp(\sum^{\tau_C-1}_{k=0}g(X^m_k)) =: d <\infty\;\textrm{and}\; \forall_{x\in \mathcal{X}}\,\|\sum_{k=0}^{\tau_C-1}g(X_k^m)\|_{\psi_1,\P_x} < \infty,
\ee
we recall that ${\bf X^m}=\{X_k^m:\;k\in\Z_{+}\}$ is the $m$-skeleton of ${\bf X}$.
Note that (\ref{cond2a}) implies
\be\label{cond2b}
\sup_{x\in C}\|\sum^{\tau_C-1}_{k=0}|Z_k(f)|^{\alpha}\|_{\psi_1,\P_x}<\infty \;\textrm{and}\; \forall_{x\in \mathcal{X}}\,\|\sum_{k=0}^{\tau_C-1}g(X_k^m)\|_{\psi_1,\P_x} < \infty
\ee
due to the Schwarz inequality
\begin{align}\label{GZ_comp}
&\E_x\exp(c^{-1}\sum^{\tau_C-1}_{k=0}|Z_k(f)|^{\alpha})\ls\E_x\exp(c^{-1}\sum^{\tau_C}_{k=0}|Z_k(f)|^{\alpha}) \\
&\ls [\E_x \exp( \sum^{\tau_C}_{k=0}g(X^m_k))]^{\frac{1}{2}}
[\E_x \exp( \sum^{\tau_C}_{k=0}(2c^{-1}|Z_k(f)|-g(X^m_k)))]^{\frac{1}{2}}\nonumber\\
&\le [\E_x\exp( \sum^{\tau_C}_{k=0}g(X^m_k))]^{\frac{1}{2}}\le
 [\E_x\exp( \sum^{\tau_C-1}_{k=0}g(X^m_k))\sup_{y\in C} \exp(g(y))]^{\frac{1}{2}}\nonumber\\
 &\le [\E_x\exp( \sum^{\tau_C-1}_{k=0}g(X^m_k)) \sup_{y\in C}\E_y \exp( \sum^{\tau_C-1}_{k=0}g(X^m_k))]^{1/2}\nonumber.
\end{align}

Moreover, if $m=1$ then (\ref{cond2a}) and (\ref{cond2b}) are equivalent (note that in this case $g(X_k^1) =2c^{-1}| f(X_k^1)|^\alpha$ and so we do not need the additional argument above).

We will now prove that the drift condition for $g$ is of the same power as  (\ref{cond2a}). More precisely, we have the following

\bt\label{thmcond1} Conditions (\ref{cond3}) and (\ref{cond2a})
are equivalent in the sense that
\begin{enumerate}
\item Whenever (\ref{cond3}) holds then so does (\ref{cond2a}).
Moreover for every $x \in \mathcal{X}$,
\begin{displaymath}
\E_x \exp(\sum^{\tau_C-1}_{k=0}g(X_k^m))\le
\exp(b1_C(x) + V(x))
\end{displaymath}
and consequently
$$
\sup_{x\in
C}\|\sum^{\tau_C-1}_{k=0}|Z_k(f)|^{\alpha}\|_{\psi_{1},\P_x}\ls
\max\{1,\frac{b+K}{\log 2}\}c.
$$
\item Whenever (\ref{cond2a}) holds, then
the function $g$ satisfies (\ref{cond3}) with $V(x)=\log(G_C(x,g))$,
where
$$
G_C(x,g)=\E_x\exp(\sum^{\sigma_C}_{k=0} g(X^m_k)),
$$
and $b=2\log d$, $K=\log d$.
\end{enumerate}
\et
\begin{proof}
Suppose that (\ref{cond3}) holds. Let
$\ccF_k=\sigma(X^m_0,X^m_1,...,X^m_k)$, $k=0,1,2,...$,  and define
the exponential martingale \be\label{marti}
M_k=\frac{\exp(V(X^m_k))}{\E(\exp(V(X^m_{k})|\ccF_{k-1}))}...\frac{\exp(V(X^m_1))}{\E(\exp(V(X^m_1))|\ccF_0)}\exp(V(X^m_0)).
\ee Therefore for the stopping time $\tau_C \wedge n$ we have
\be\label{nier} \E_{x} M_{\tau_C\wedge
n}=\exp(V(x)),\;\;\mbox{for}\;x\in \mathcal{X}. \ee
Due to (\ref{cond3}) we
obtain that
$$
\frac{\exp(V(X^m_{i-1}))}{\E_x(\exp(V(X^m_i))|\ccF_{i-1})}=\frac{\exp(V(X^m_{i-1}))}{P^m(\exp(V))(X^m_{i-1})}\gs
 \exp(g(X^m_{i-1}) - b1_C(X_{i-1}^m))
$$
and hence
$$
M_k\exp(-V(X^m_k))=\prod^{k}_{i=1}\frac{\exp(V(X^m_{i-1}))}{\E(\exp(V(X^m_i))|\ccF_{i-1})}\gs \exp(\sum^{k}_{i=1}g(X^m_{i-1})-b1_C(X^m_{i-1})).
$$
Consequently by (\ref{marti}) and (\ref{nier}) for every $x \in
\mathcal{X}$,
$$
\E_x \exp( \sum^{(\tau_C\wedge n)-1}_{k=0}g(X^m_k))\ls
\exp(b1_C(x)+V(x)),
$$
therefore by letting $n \to\infty$ and using Fatou's lemma we get
$$
\E_x \exp(\sum^{\tau_C-1}_{k=0}g(X^m_k))\ls
\exp(b1_C(x)+V(x)).
$$
Now, by (\ref{GZ_comp})
\begin{displaymath}
\E_x\exp(c^{-1}\sum_{k=1}^{\tau_C-1}|Z_k(f)|^\alpha) \le \exp((b1_C(x) + V(x)+b+K)/2),
\end{displaymath}
which via H\"older's inequality implies the second inequality of point 1.

To prove the second assertion observe that
\begin{align*}
&(P^m G_C)(x)=\E_x \exp(\sum^{\tau_C}_{k=1}g(X_k^m))=\\
&= \exp(-g(x))\E_x\exp(\sum^{\sigma_C}_{k=0}g(X_k^m))
\exp(1_C(x)\log(\E_x\exp(\sum^{\tau_C}_{k=1}g(X_k^m))))=\\
&=\exp(-g(x)+1_C(x)\log(\E_x\exp(\sum^{\tau_C}_{k=1}g(X_k^m)))G_C(x,g),
\end{align*}
i.e.
\begin{displaymath}
\exp(-V(x))(P^m \exp(V))(x) = \exp\Big(-g(x) + 1_C(x)\log(\E_x\exp(\sum^{\tau_C}_{k=1}g(X_k^m))) \Big).
\end{displaymath}
From the definition of $d$ in (\ref{cond2a}) we conclude that $g(x)\ls \log d$ for
$x\in C$ and
$$
\sup_{x\in C}\E_x \exp(\sum^{\tau_C}_{k=1}g(X_k))\ls d^2.
$$
Therefore
$$
\sup_{x\in
C}\log(\E_x\exp(\sum^{\tau_C}_{k=1}g(X_k)))\ls 2\log d.
$$
and
$$
\sup_{x\in C}V(x) = \log(G_C(x,g))=g(x)\ls \log d,\;\;\mbox{for}\;x\in C.
$$
This shows that (\ref{cond2a}) holds with $b=2\log d$ and $K=\log
d$.
\end{proof}

\begin{coro} If (\ref{cond3}) is satisfied then
\begin{align*}
\mathcal{A} & \le \Big(\frac{\max\{2b+V(x)+ K,2\log 2\}}{2\log 2}c\Big)^{\frac{1}{\alpha}},\\
\mathcal{B} & \le \Big(\frac{\max\{2b+2\log\pi(\exp(V)/2)+K,2\log 2\}}{2\log 2}c\Big)^{\frac{1}{\alpha}},\\
\mathcal{C},\ccD & \le \Big(\frac{\max\{b+K,\log 2\}}{\log
2}c\Big)^{\frac{1}{\alpha}}.
\end{align*}
\end{coro}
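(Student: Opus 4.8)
The plan is to bound all four quantities by a single exponential moment of $\sum_{k=0}^{\tau_C-1}g(X_k^m)$, which Theorem \ref{thmcond1} already controls under (\ref{cond3}). The first step is the reduction behind (\ref{cond2}): for $\alpha\in(0,1]$ subadditivity of $t\mapsto t^\alpha$ gives $(\sum_i|Z_i(f)|)^\alpha\ls\sum_i|Z_i(f)|^\alpha$, whence for any initial law $\mu$ and any index range, $\|\sum_i|Z_i(f)|\|_{\psi_\alpha,\mu}\ls\|\sum_i|Z_i(f)|^\alpha\|_{\psi_1,\mu}^{1/\alpha}$. Since $|\sum_i Z_i(f)|\ls\sum_i|Z_i(f)|$ and $|Z_0(f)|\ls\sum_i|Z_i(f)|$, the same inequality dominates $\ccC$ and $\ccD$ as well. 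Thus it suffices to bound the $\psi_1$-norm of $\sum_k|Z_k(f)|^\alpha$ under $\P_x$, under $\P_\pi$, and under $\sup_{x\in C}\P_x$, for the appropriate ranges of $k$.

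Next I would convert these $\psi_1$-norms into exponential moments. The Schwarz bound (\ref{GZ_comp}) gives $\E_x\exp(c^{-1}\sum_{k=0}^{\tau_C-1}|Z_k(f)|^\alpha)\ls[\E_x\exp(\sum_{k=0}^{\tau_C}g(X_k^m))]^{1/2}$; peeling off the terminal term and using $X_{\tau_C}^m\in C$ together with $\sup_{y\in C}g(y)\ls b+K$ (which follows from (\ref{cond3}), from $V\ls K$ on $C$, and from $P^m\exp(V)\gs 1$ as $V\gs 0$) reduces the right-hand side to $[\E_x\exp(\sum_{k=0}^{\tau_C-1}g(X_k^m))\,e^{b+K}]^{1/2}$. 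Theorem \ref{thmcond1}(1) bounds the surviving factor by $\exp(b1_C(x)+V(x))$, so that for every $x$ one obtains $\E_x\exp(c^{-1}\sum_{k=0}^{\tau_C-1}|Z_k(f)|^\alpha)\ls\exp((b1_C(x)+V(x)+b+K)/2)$.

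The final ingredient is an elementary conversion: if $\E\exp(\lambda S)\ls e^L$ for a nonnegative $S$ and $L,\lambda>0$, then $\|S\|_{\psi_1}\ls\lambda^{-1}\max\{1,L/\log 2\}$, since $\E\exp(\lambda S/t)=\E[\exp(\lambda S)]^{1/t}\ls(\E\exp(\lambda S))^{1/t}$ for $t\gs 1$ (Jensen) and one may take $t=\max\{1,L/\log 2\}$. Applying this with $\lambda=c^{-1}$ to the moment bound above yields $\ccA$ (initial law $\delta_x$, with $1_C(x)\ls 1$ giving $L=(2b+V(x)+K)/2$) and $\ccC$ (restricting to $x\in C$, where $1_C=1$ and $V\ls K$ collapse $L$ to $b+K$); for $\ccD$ one applies the conversion directly to $\E_x\exp(2c^{-1}|Z_0(f)|^\alpha)=e^{g(x)}\ls e^{b+K}$. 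For $\ccB$ I would instead integrate the per-point moment bound against $\pi$, getting $\E_\pi\exp(c^{-1}\sum_{k=0}^{\tau_C-1}|Z_k(f)|^\alpha)\ls\int\exp((b1_C(x)+V(x)+b+K)/2)\,d\pi\ls e^{(2b+K)/2}\int\exp(V(x)/2)\,d\pi$, which is exactly where the $\pi$-integral $\pi(\exp(V)/2)$ of $\exp(V/2)$ enters, before the same conversion is applied.

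The real content sits in the two analytic inputs—the subadditivity reduction (\ref{cond2}) and Theorem \ref{thmcond1}—so the remaining difficulty is purely the bookkeeping of explicit constants, which I expect to be the main (if routine) obstacle. One must track the factor $1/2$ produced by the Schwarz step (this is what turns the denominator into $2\log 2$ for $\ccA,\ccB$ while $\ccC,\ccD$ retain $\log 2$), the additive boundary contribution $b+K$ coming from the terminal term $g(X_{\tau_C}^m)$, and, for $\ccB$, the fact that integrating $\exp(V/2)$ rather than $\exp(V)$ against $\pi$ produces the stated term $\pi(\exp(V)/2)$ inside $\max\{\cdot,2\log 2\}$. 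Once these are carried through, the four bounds follow by direct substitution into the conversion lemma.
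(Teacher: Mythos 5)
Your proposal is correct and follows essentially the same route as the paper, whose one-line proof combines exactly your ingredients: the reduction $\|\sum_k|Z_k(f)|\|_{\psi_\alpha}\le\|\sum_k|Z_k(f)|^\alpha\|_{\psi_1}^{1/\alpha}$ from (\ref{cond2}), the exponential-moment bound of Theorem \ref{thmcond1}(1) via the Schwarz computation (\ref{GZ_comp}) (integrated against $\pi$ for $\ccB$), and a H\"older/Jensen conversion from $\E\exp(\lambda S)\le e^L$ to a $\psi_1$-norm bound, which is precisely your ``conversion lemma.'' Your bookkeeping of the constants is accurate, including the correct reading of $\pi(\exp(V)/2)$ as $\pi(\exp(V/2))$ arising from integrating the pointwise Schwarz bound against $\pi$.
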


\begin{proof}
It is enough to combine the estimate
$\ccC^\alpha, \ccD^\alpha \le \sup_{x\in
C}\|\sum^{\tau_C-1}_{k=0}|Z_k(f)|^{\alpha}\|_{\psi_{1},\P_x}$, the first part of Theorem \ref{thmcond1}
(integrated with respect to $\pi$ in the case of the quantity
$\mathcal{B}$), observation (\ref{GZ_comp}) and the H\"older inequality.
\end{proof}

\subsection{'Regular' drift condition}\label{secRegularDrift}
\noindent As we have already mentioned, the multiplicative drift
condition is difficult to check. Therefore we would like to
replace it with a simpler criterion such as the usual drift
condition (see \cite{MeynTwe}, \cite{Num1}). The price to pay is
strengthening the requirements on $\tau_C$. Namely we
assume that
$$
\sup_{x\in C}\|\tau_C\|_{\psi_{\beta},\P_x}<\infty,
$$
where $\beta> \alpha$. Note that if $\beta=1$ then we are in the
setting of geometric ergodicity. We would like to point out that one can verify such integrability conditions on $\tau_C$ by using classical drift criteria (see e.g. \cite{MeynTwe}) for $\beta = 1$ or its modified versions presented e.g. in \cite{DGM} for $\beta<1$. In the latter reference also concentration inequalities for bounded $f$ were presented. Our aim now is to provide drift conditions on $f$, simpler than those discussed in the previous section, which would complement the $\psi_\beta$ integrability of $\tau_C$ and yield strong exponential inequalities in the unbounded case.

Let
$h(x)=\log\E_x\exp(c^{-\gamma}|Z_0(f)|^{\gamma})$ ,
$\gamma=\frac{\alpha\beta}{\beta-\alpha}$ (note that for $m=1$ we have $h(x) = c^{-\gamma}|f(x)|^\gamma$). The drift condition we will consider is
of the form: suppose there exists $V:\ccX\ra \R_+, b >0$ such that
\be\label{war} (P^m V)(x)-V(x)\ls
-\exp(h(x))+b1_C(x), \ee and $V(x)\ls K$ for $x\in C$. The
usual martingale argument (see \cite{MeynTwe}) shows that for all
$x \in \mathcal{X}$,

\be\label{war1}
\E_x\sum^{\tau_C-1}_{k=0}\exp(\frac{|Z_k(f)|^{\gamma}}{c^{\gamma}})\ls
b+V(x). \ee

\bp For $\alpha \in (0,1]$ and $\beta > \alpha$, if (\ref{war}) is satisfied, then
$$
\sup_{x\in C}\|\sum^{\tau_C-1}_{k=0}|Z_k(f)|\|_{\psi_{\alpha},\P_x}\ls c_1c_2,
$$
where $c_1=\sup_{x\in C}\|\tau_C\|_{\psi_{\beta},\P_x}$, $c_2=c(\max(\frac{\log(b+K)}{\log 2},1))^{1/\gamma}$.
\ep
\begin{proof}
For $u,v>0$  the Young inequality holds, i.e.
$$
u^{\alpha}v^{\alpha}\ls  \frac{\alpha}{\beta} u^{\beta}+(1-\frac{\alpha}{\beta}) v^{\frac{\alpha\beta}{\beta-\alpha}},
$$
Consequently
\begin{align*}
&  \E_{x}\exp(c_1^{-\alpha}c_2^{-\alpha}|\sum^{\tau_C-1}_{k=0}Z_k(f)|^{\alpha})
 \ls
\E_{x}\exp((c_1^{-1}\tau_C)^{\alpha}(\tau^{-1}_C\sum^{\tau_C-1}_{k=0}c_2^{-1}|Z_k(f)|)^{\alpha})\\
& \ls \E_{x} \exp(\frac{\alpha}{\beta}\frac{\tau^{\beta}_C}{c_1^{\beta}})
\exp((1-\frac{\alpha}{\beta})(\tau^{-1}_C\sum^{\tau_C-1}_{k=0}c_2^{-1}|Z_k(f)|)^{\frac{\alpha\beta}{\beta-\alpha}}).
\end{align*}
Therefore by the H\"older inequality
\begin{align*}
&\E_{x}\exp(c_1^{-\alpha}c_2^{-\alpha}|\sum^{\tau_C-1}_{k=0}Z_k(f))|^{\alpha})\ls \\
&\ls (\E_x(\exp(\frac{\tau_C^{\beta}}{c_1^{\beta}})))^{\frac{\alpha}{\beta}}
(\E_x (\exp(\tau^{-1}_C\sum^{\tau_C-1}_{k=0}c_2^{-1}|Z_k(f)|))^{\frac{\alpha\beta}{\beta-\alpha}}))^{1-\frac{\alpha}{\beta}}.
\end{align*}
Note that if $c_1=\sup_{x\in C}\|\tau_C\|_{\psi_{\beta},\P_x}$, then
$$
\E_x\exp(\frac{\tau_C^{\beta}}{c_1^{\beta}})\ls 2,\;\;\mbox{for}\;x\in C.
$$
Now we will estimate the second term. We have
\begin{align*}
\E_x (\exp(\tau^{-1}_C\sum^{\tau_C-1}_{k=0}c_2^{-1}|Z_k(f)|)^{\gamma})\ls
\E_x \exp(\max_{0\ls k\ls \tau_C-1}\frac{|Z_k(f)|^{\gamma}}{c_2^{\gamma}})
\ls \E_x (\sum^{\tau_C-1}_{k=0}\exp(\frac{|Z_k(f)|^{\gamma}}{c^{\gamma}}))^{\frac{c^{\gamma}}{c_2^{\gamma}}}.
\end{align*}
Since $c_2=c(\max(\frac{\log(b+K)}{\log 2},1))^{1/\gamma}$, (\ref{war1}) implies that
\begin{align*}
 \E_x (\sum^{\tau_C-1}_{k=0}\exp(\frac{|Z_k(f)|^{\gamma}}{c^{\gamma}}))^{\frac{c^{\gamma}}{c_2^{\gamma}}}
\ls  (\E_x \sum^{\tau_C-1}_{k=0}\exp(\frac{|Z_k(f)|^{\gamma}}{c^{\gamma}}))^{\frac{c^{\gamma}}{c_2^{\gamma}}}\ls 2
\end{align*}
and so
$$
\sup_{x\in C}\E_{x}\exp(c_1^{-\alpha}c_2^{-\alpha}|\sum^{\tau_C-1}_{k=0}Z_k(f)|^{\alpha})\ls 2,
$$
which completes the proof.
\end{proof}
Basically the same idea can be used to bound $\ccA$ and $\ccB$. We
summarize the result in the following \bt\label{thmcond2} Whenever
the drift condition (\ref{war}) is satisfied the following inequalities
hold:
\begin{enumerate}
\item Let $c_1=\sup_{x\in C}\|\tau_C\|_{\psi_{\beta},\P_x}$, $c_2=c(\max(\frac{b+K}{\log 2},1))^{1/\gamma}$, then
$$
\ccC,\ccD \le\sup_{x\in C}\|\sum^{\tau_C-1}_{k=0}|Z_k(f)|\|_{\psi_{\alpha},\P_x}\ls c_1c_2,
$$
\item Let  $c_3=\|\tau_C\|_{\psi_{\beta},\P_{x}}$, $c_4=c(\max(\frac{\log(V(x)+b)}{\log 2},1))^{1/\gamma}$, then
$$
\ccA=\|\sum^{\tau_C-1}_{k=0}|Z_k(f)|\|_{\psi_{\alpha},\P_{x}}\ls c_3 c_4
$$
\item Let $c_5=\|\tau_C\|_{\psi_{\beta},\P_\pi}$, $c_6=(\max(\frac{\log(\pi V+b)}{\log 2},1))^{1/\gamma}$, then
$$
\ccB=\|\sum^{\tau_C-1}_{k=0}|Z_k(f)|\|_{\psi_{\alpha},\P_{\pi}}\ls c_5 c_6,
$$
\end{enumerate}
\et

If $\beta=1$ then $\gamma=\alpha/(1-\alpha)$ and the requirement
on $\tau_C$ is equivalent to geometric ergodicity. Therefore in
the simplest case of $m=1$ to obtain  meaningful bounds on the $\ccA,\ccB,\ccC,\ccD$ it is enough e.g. to assume the classical drift condition \eqref{eq:geometric_ergodicity_drift_new}
and a pointwise bound on the function generating the additive functional.

\begin{prop}\label{prop:geometric_erg_new}
Assume that \eqref{small} is satisfied with $m=1$ and that for some function $V\colon \mathcal{X} \to [1,\infty)$ and $\lambda \in (0,1)$, $b \in \R_+$ the drift condition \eqref{eq:geometric_ergodicity_drift_new} holds.
Set $K = \sup_{x\in C} V(x)$.
Assume moreover that $g\colon \mathcal{X} \to \R$ satisfies
$$
|g(x)|\ls \kappa\Big(\log  V(x)\Big)^{\frac{1-\alpha}{\alpha}}
$$
for some $\kappa > 0$ and set $f = g - \pi g$.
Then
\begin{align*}
\ccA\le& \ccA_{drift} := \kappa \frac{1}{\log \frac{1}{1-\lambda}} \max\Big(\frac{\log(V(x)1_{C^c}(x) + (b(1-\lambda)^{-1} + K)1_C(x))}{\log 2},1\Big) \times\\
&\phantom{aaaaaaa}\times \Big[\Big(\max(\frac{\log(V(x)\lambda^{-1}+b\lambda^{-1})}{\log 2},1)\Big)^{1-\alpha} + \frac{1}{(\log 2)^{1-\alpha}}(\pi|g|/\kappa)^\alpha\Big]^{1/\alpha},\\
\ccB\le& \ccB_{drift} := \kappa\frac{1}{\log \frac{1}{1-\lambda}} \max\Big(\frac{\log(\pi V + (b(1-\lambda)^{-1}+K)\pi(C))}{\log 2},1\Big)\times\\
&\phantom{aaaaaaaa}\times\Big[\Big(\max(\frac{\log(\pi V\lambda^{-1}+b\lambda^{-1})}{\log 2},1)\Big)^{1-\alpha} + \frac{1}{(\log 2)^{1-\alpha}}(\pi|g|/\kappa)^\alpha\Big]^{1/\alpha},\\
\ccC,\ccD  \le& \ccC_{drift} := \kappa\frac{1}{\log \frac{1}{1-\lambda}}\max\Big(\frac{\log(b(1-\lambda)^{-1}+K)}{\log 2},1\Big)\times\\
&\phantom{aaaaaaaa}\times \Big[\Big(\max(\frac{\log(b\lambda^{-1}+K\lambda^{-1})}{\log 2},1)\Big)^{1-\alpha} + \frac{1}{(\log 2)^{1-\alpha}}(\pi|g|/\kappa)^\alpha\Big]^{1/\alpha}.
\end{align*}
Moreover
\begin{displaymath}
\pi|g| \le \left\{
\begin{array}{l}
\kappa \Big(\log(b\pi(C)\lambda^{-1})\Big)^{\frac{1-\alpha}{\alpha}}\quad{\rm for}\;\alpha \ge 1/2,\\
\kappa \Big(\log(b\pi(C)\lambda^{-1}) + \frac{1-2\alpha}{\alpha}\Big)^{\frac{1-\alpha}{\alpha}} - \kappa\Big(\frac{1-2\alpha}{\alpha}\Big)^{\frac{1-\alpha}{\alpha}}\quad{\rm for}\; \alpha < 1/2
\end{array}
\right.
\end{displaymath}
and
\begin{align}\label{eq:piV_est_new}
\pi V \le \frac{b\pi(C)}{\lambda}.
\end{align}
\end{prop}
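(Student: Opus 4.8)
The plan is to deduce the four estimates from the general bounds of Theorem~\ref{thmcond2} applied to the \emph{uncentered} function $g$, and then to correct by hand for the centering $f=g-\pi g$. Throughout write $\gamma=\alpha/(1-\alpha)$ and $s=1/\gamma=(1-\alpha)/\alpha$, so that the hypothesis on $g$ reads $|g(x)|^\gamma\le\kappa^\gamma\log V(x)$, i.e. $\exp(\kappa^{-\gamma}|g(x)|^\gamma)\le V(x)$. The two scalar bounds are quickest: integrating \eqref{eq:geometric_ergodicity_drift_new} against the invariant measure and using $\pi(PV)=\pi V$ gives $0\le-\lambda\pi V+b\pi(C)$, which is \eqref{eq:piV_est_new}. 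For $\pi|g|$ I would start from $\pi|g|\le\kappa\,\pi\big((\log V)^{s}\big)$. When $\alpha\ge1/2$ one has $s\le1$, the map $t\mapsto(\log t)^{s}$ is concave on $[1,\infty)$, and Jensen together with \eqref{eq:piV_est_new} gives the first case. When $\alpha<1/2$ one has $s>1$; here I would first apply superadditivity of $t\mapsto t^{s}$ to get $(\log V)^{s}\le(\log V+a)^{s}-a^{s}$ with $a=s-1=(1-2\alpha)/\alpha$, and then use that the shifted map $t\mapsto(\log t+a)^{s}$ is concave on $[1,\infty)$ (its second derivative has the sign of $(s-1)-(\log t+a)=-\log t\le0$); Jensen and \eqref{eq:piV_est_new} again finish the second case.

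Next I would estimate the exponential tails of the regeneration time $\tau_C$, which supply the outer factors $\frac{1}{\log(1/(1-\lambda))}\max(\cdots,1)$ common to the three bounds. Rewriting \eqref{eq:geometric_ergodicity_drift_new} as $PV\le(1-\lambda)V+b1_C$ and setting $s_0=\log\frac{1}{1-\lambda}$, the process $e^{s_0k}V(X_k)$ is a supermartingale while the chain stays outside $C$; optional stopping at $\tau_C$ and $V\ge1$ give $\E_x e^{s_0\tau_C}\le V(x)$ for $x\notin C$, while one further use of the drift gives $\E_x e^{s_0\tau_C}\le V(x)+b(1-\lambda)^{-1}\le K+b(1-\lambda)^{-1}$ for $x\in C$. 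A routine Jensen interpolation converts $\E_x e^{s_0\tau_C}\le M$ into $\|\tau_C\|_{\psi_1,\P_x}\le s_0^{-1}\max(\frac{\log M}{\log2},1)$, and taking $M$ as above (respectively integrating it against $\pi$, respectively taking $\sup_{x\in C}$) yields exactly the outer factors of $\ccA_{drift}$, $\ccB_{drift}$ and $\ccC_{drift}$.

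The core step uses the hypothesis $\exp(\kappa^{-\gamma}|g(x)|^\gamma)\le V(x)$: the rescaled function $\widehat V=\lambda^{-1}V$ satisfies the regular drift condition \eqref{war} for $g$ with $c=\kappa$, constant $\lambda^{-1}b$ and $\sup_C\widehat V=\lambda^{-1}K$, since $P\widehat V-\widehat V\le-V+\lambda^{-1}b1_C\le-\exp(\kappa^{-\gamma}|g|^\gamma)+\lambda^{-1}b1_C$. Thus \eqref{war1} gives $\E_x\sum_{k=0}^{\tau_C-1}\exp(\kappa^{-\gamma}|g(X_k)|^\gamma)\le\lambda^{-1}(V(x)+b)$, and Theorem~\ref{thmcond2} applied to $g$ (with $\beta=1$, $c=\kappa$) bounds $\|\sum_{k=0}^{\tau_C-1}|g(X_k)|\|_{\psi_\alpha,\P_x}$ by $\|\tau_C\|_{\psi_1,\P_x}\,\kappa\,\big(\max(\frac{\log(\lambda^{-1}V(x)+\lambda^{-1}b)}{\log2},1)\big)^{s}$, which is the first summand in the bracket of $\ccA_{drift}$. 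To pass from $g$ to $f$ I would use $\sum_{k=0}^{\tau_C-1}|f(X_k)|\le\sum_{k=0}^{\tau_C-1}|g(X_k)|+\tau_C\,\pi|g|$ and the quasi-triangle inequality $\|U+W\|_{\psi_\alpha}\le(\|U\|_{\psi_\alpha}^\alpha+\|W\|_{\psi_\alpha}^\alpha)^{1/\alpha}$ (a consequence of subadditivity of $t\mapsto t^\alpha$ and Hölder). The extra term $\pi|g|\,\|\tau_C\|_{\psi_\alpha}$ I would control by the comparison $\|\tau_C\|_{\psi_\alpha}\le(\log2)^{-s}\|\tau_C\|_{\psi_1}$, which follows from $\E e^{\tau_C/\|\tau_C\|_{\psi_1}}\le2$ and Young's inequality $(\log2)^{1-\alpha}(\tau_C/\|\tau_C\|_{\psi_1})^\alpha\le\alpha\,\tau_C/\|\tau_C\|_{\psi_1}+(1-\alpha)\log2$. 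Raising to the $\alpha$-th power and adding the two contributions reproduces the bracket $\big(\max(\cdots,1)\big)^{1-\alpha}+(\log2)^{-(1-\alpha)}(\pi|g|/\kappa)^\alpha$ and hence $\ccA_{drift}$; the bounds $\ccB_{drift}$ and $\ccC_{drift},\ccD_{drift}$ come out identically by working under $\P_\pi$ (integrating the $V$-dependent estimates against $\pi$ and invoking \eqref{eq:piV_est_new}) and under $\sup_{x\in C}$ (where $V\le K$, and for $m=1$ one has $Z_0(f)=f$, so $\ccD\le\ccC$), respectively.

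The main obstacle I anticipate is the bookkeeping created by the centering: obtaining the sharp constant $(\log2)^{-(1-\alpha)}$ in the $\psi_\alpha$/$\psi_1$ comparison for $\tau_C$ and verifying that the two summands recombine into precisely the displayed bracket, together with checking that the single rescaling $V\mapsto\lambda^{-1}V$ is exactly what turns the geometric drift \eqref{eq:geometric_ergodicity_drift_new} into the hypothesis \eqref{war} of Theorem~\ref{thmcond2} with $c=\kappa$.
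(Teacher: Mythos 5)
Your proposal reproduces the paper's proof essentially step for step: the rescaling $\tilde V=\lambda^{-1}V$, $\tilde b=\lambda^{-1}b$ converting \eqref{eq:geometric_ergodicity_drift_new} into \eqref{war} for $g$ with $\beta=1$ and $c=\kappa$, Theorem \ref{thmcond2} for the uncentered sums, the quasi-triangle inequality in $\psi_\alpha$ together with the comparison $\|\tau_C\|_{\psi_\alpha}\le(\log 2)^{-(1-\alpha)/\alpha}\|\tau_C\|_{\psi_1}$ (the paper's Lemma \ref{le:Orlicz_new}) to absorb the centering term $\tau_C\,\pi|g|$, and the same Jensen/concavity arguments for $\pi|g|$ and \eqref{eq:piV_est_new}; the only deviation is that you re-derive $\E_x e^{s_0\tau_C}\le V(x)$ (resp.\ $\le b(1-\lambda)^{-1}+K$ on $C$) by an explicit supermartingale argument where the paper cites Baxendale via its Lemma \ref{le:auxiliary_regeneration_new}, a correct inline substitute. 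One harmless slip: your parenthetical ``$\ccD\le\ccC$'' is not literally true (the sum defining $\ccC$ starts at $i=1$ and may be empty), but this is immaterial since Theorem \ref{thmcond2} bounds $\ccC$ and $\ccD$ simultaneously by the norm of the full sum $\sum_{k=0}^{\tau_C-1}|Z_k(f)|$, exactly as the paper does.
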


To prove the above proposition we will need the following

\begin{lema}\label{le:auxiliary_regeneration_new}
Assume that \eqref{small} is satisfied with $m=1$ and that for some function $V\colon \mathcal{X} \to [1,\infty)$ and $\lambda \in (0,1)$, $b \in \R_+$ the drift condition \eqref{eq:geometric_ergodicity_drift_new} holds.
Set $K = \sup_{x\in C} V(x)$. Then
\begin{align*}
\|\tau_C\|_{\psi_1,\P_x} &\le \max\Big(\frac{\log(V(x)1_{C^c}(x) + (b(1-\lambda)^{-1}+K)1_C(x))}{\log 2},1\Big)\frac{1}{\log \frac{1}{1-\lambda}},\nonumber\\
\|\tau_C\|_{\psi_1,\P_\pi} &\le \max\Big(\frac{\log(\pi V + (b(1-\lambda)^{-1}+K)\pi(C))}{\log 2},1\Big)\frac{1}{\log \frac{1}{1-\lambda}},\\
\sup_{x\in C} \|\tau_C\|_{\psi_1,\P_x} &\le \max\Big(\frac{\log(b(1-\lambda)^{-1}+K)}{\log 2},1\Big)\frac{1}{\log \frac{1}{1-\lambda}}.\nonumber
\end{align*}
\end{lema}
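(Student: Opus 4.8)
The plan is to obtain the three $\psi_1$-estimates from a single exponential-moment bound on $\tau_C$, which I would extract directly from the drift inequality by an optional-stopping argument, and then convert to Orlicz norms by a scaling/Jensen trick. Throughout I use that $m=1$, so $X_k^m=X_k$ and $\tau_C=\inf\{k\ge1\colon X_k\in C\}$.

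First I would set $\beta=\log\frac{1}{1-\lambda}$, so that $e^{-\beta}=1-\lambda$ and the drift condition \eqref{eq:geometric_ergodicity_drift_new} reads $(PV)(x)\le e^{-\beta}V(x)+b1_C(x)$. I then consider the stopped process $G_n=e^{\beta(n\wedge\tau_C)}V(X_{n\wedge\tau_C})$ with respect to $\mathcal{F}_n=\sigma(X_0,\dots,X_n)$. On $\{\tau_C\le n\}$ the process is constant, while on $\{\tau_C>n\}$ with $n\ge1$ we have $X_n\notin C$, so $\E[G_{n+1}\mid\mathcal{F}_n]=e^{\beta(n+1)}(PV)(X_n)\le e^{\beta(n+1)}e^{-\beta}V(X_n)=G_n$. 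Hence $\{G_n\}_{n\ge1}$ is a supermartingale and $\E_x G_n\le \E_x G_1$ for all $n\ge1$. Since $V\ge1$, this controls $\E_x e^{\beta(n\wedge\tau_C)}$, and letting $n\to\infty$ with Fatou's lemma yields a bound on $\E_x e^{\beta\tau_C}$.

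Next I would evaluate the first step to identify the constant in each starting regime. For $x\notin C$ the supermartingale property already holds from $n=0$ (as $X_0=x\notin C$), giving $\E_x e^{\beta\tau_C}\le V(x)$. For $x\in C$ the $b1_C$ term enters exactly once, at time $0$: since $G_1=e^{\beta}V(X_1)$ (because $\tau_C\ge1$), one gets $\E_x G_1=e^{\beta}(PV)(x)\le e^{\beta}\big(e^{-\beta}V(x)+b\big)=V(x)+e^{\beta}b\le K+b(1-\lambda)^{-1}$, whence $\E_x e^{\beta\tau_C}\le b(1-\lambda)^{-1}+K$. Writing $A(x)=V(x)1_{C^c}(x)+(b(1-\lambda)^{-1}+K)1_C(x)$, this is $\E_x e^{\beta\tau_C}\le A(x)$. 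Integrating against $\pi$ gives $\E_\pi e^{\beta\tau_C}\le \pi V+(b(1-\lambda)^{-1}+K)\pi(C)$, and taking the supremum over $x\in C$ gives $\sup_{x\in C}\E_x e^{\beta\tau_C}\le b(1-\lambda)^{-1}+K$; these are precisely the three $A$-quantities appearing in the statement.

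Finally I would convert each bound $\E e^{\beta\tau_C}\le A$ (with $A\ge1$) into the claimed $\psi_1$ estimate. Put $c=\beta^{-1}\max\big(\tfrac{\log A}{\log 2},1\big)$ and $\rho=(\beta c)^{-1}=\min\big(\tfrac{\log 2}{\log A},1\big)\in(0,1]$. By concavity of $t\mapsto t^\rho$ and Jensen's inequality, $\E e^{\tau_C/c}=\E\big(e^{\beta\tau_C}\big)^\rho\le A^\rho$; if $A\ge2$ then $A^\rho=A^{\log 2/\log A}=2$, and if $A<2$ then $\rho=1$ and $A^\rho=A<2$. In either case $\E e^{\tau_C/c}\le2$, so by the definition of the Orlicz norm $\|\tau_C\|_{\psi_1}\le c=\max\big(\tfrac{\log A}{\log 2},1\big)\tfrac{1}{\log\frac{1}{1-\lambda}}$, which is exactly the asserted inequality in each of the three cases. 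The only genuinely delicate point is the bookkeeping of the first step for $x\in C$: one must start the supermartingale at $n=1$ and account for the single extra increment $e^{\beta}b$ precisely, so as to land on the sharp constant $b(1-\lambda)^{-1}+K$ rather than a looser one; everything else is routine.
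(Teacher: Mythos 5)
Your proof is correct, and it differs from the paper's in one substantive respect: the paper obtains the exponential-moment bound $\E_x(1-\lambda)^{-\tau_C}\le V(x)$ for $x\notin C$ and $\le b(1-\lambda)^{-1}+K$ for $x\in C$ by citing Proposition 4.1 (ii) of Baxendale (with a remark translating his conventions to the drift form \eqref{eq:geometric_ergodicity_drift_new}), and then passes to the $\psi_1$ norms ``by integration and H\"older's inequality.'' You instead prove that moment bound from scratch: your stopped process $G_n=e^{\beta(n\wedge\tau_C)}V(X_{n\wedge\tau_C})$ with $\beta=\log\frac{1}{1-\lambda}$ is indeed a supermartingale for $n\ge 1$ (on $\{\tau_C>n\}$ with $n\ge1$ one has $X_n\notin C$, so the $b1_C$ term never contributes after time $0$), and your one-step accounting at time $0$, namely $\E_x G_1=e^{\beta}(PV)(x)\le V(x)+b(1-\lambda)^{-1}$, lands exactly on the sharp constant $b(1-\lambda)^{-1}+K$ on $C$ --- consistent with the paper's parenthetical remark that Baxendale's constant translates to $(1-\lambda)K+b$ in his normalization. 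Your final conversion via Jensen ($\E(e^{\beta\tau_C})^{\rho}\le A^{\rho}=2$ for $A\ge2$, with the case $A<2$ handled trivially by $\rho=1$) is the same computation the paper compresses into ``H\"older's inequality'' (Lyapunov's inequality being the $\rho\le1$ instance of either), and the integration against $\pi$ for the second estimate is identical. What your route buys is self-containedness, with explicit constants visible at every step; what the citation buys is brevity. The two delicate points --- Fatou's lemma to pass from $n\wedge\tau_C$ to $\tau_C$ (valid, and it simultaneously forces $\tau_C<\infty$ a.s.\ since $\E_x G_1<\infty$), and $A\ge1$ so that the exponent $\rho$ is well defined (guaranteed by $V\ge1$) --- are both handled correctly in your write-up.
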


\begin{proof}
From Proposition 4.1. (ii) in \cite{Bax} we obtain
\begin{align*}
\E_x (1-\lambda)^{-\tau_C} \le \left\{
\begin{array}{ccc}
V(x)&\;\textrm{if}\;&x \notin C,\\
b(1-\lambda)^{-1}+K&\;\textrm{if}\;&x\in C.
\end{array}
\right.
\end{align*}
(note that due to different conventions regarding the drift condition `$\lambda$ in \cite{Bax}' is `our  $(1-\lambda)$', moreover `$K$ in \cite{Bax}' is trivially bounded by `our $(1-\lambda)K + b$').
This implies the lemma by integration and H\"older's inequality.
\end{proof}

We will also need the following well known lemma. Since we haven't been able to find its proof in the literature, we provide it for completeness.

\begin{lema}\label{le:Orlicz_new}
For any $\alpha \in (0,1]$ and any random variables $X,Y$ we have
\begin{displaymath}
\|X+Y\|_{\psi_\alpha} \le (\|X\|_{\psi_\alpha}^\alpha + \|Y\|_{\psi_\alpha}^\alpha)^{1/\alpha}.
\end{displaymath}
Moreover
\begin{displaymath}
\|X\|_{\psi_\alpha} \le \Big(\frac{1}{\log 2}\Big)^{\frac{1-\alpha}{\alpha}} \|X\|_{\psi_1}.
\end{displaymath}
\end{lema}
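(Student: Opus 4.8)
The plan is to treat the two assertions separately, since the first is the standard ``triangle inequality'' for the $\psi_\alpha$ quasi-norm and the second is a pure comparison between two Orlicz scales. Throughout I would work directly from the definition $\|Z\|_{\psi_\alpha}=\inf\{c>0:\E\exp(|Z|^\alpha/c^\alpha)\le 2\}$, fixing for each random variable a value of $c$ that is admissible (i.e.\ realizes the constraint $\E\exp(|Z|^\alpha/c^\alpha)\le2$) up to an arbitrarily small $\varepsilon$; passing to the limit $\varepsilon\to0$ at the end removes the issue of whether the infimum is attained.

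For the subadditivity estimate, set $a=\|X\|_{\psi_\alpha}$, $b=\|Y\|_{\psi_\alpha}$ and $c=(a^\alpha+b^\alpha)^{1/\alpha}$. The elementary inequality $|X+Y|^\alpha\le|X|^\alpha+|Y|^\alpha$, valid because $t\mapsto t^\alpha$ is subadditive for $\alpha\in(0,1]$, lets me write $|X+Y|^\alpha/c^\alpha\le p^{-1}|X|^\alpha/a^\alpha+q^{-1}|Y|^\alpha/b^\alpha$ with the conjugate exponents $p^{-1}=a^\alpha/(a^\alpha+b^\alpha)$ and $q^{-1}=b^\alpha/(a^\alpha+b^\alpha)$. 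Hölder's inequality then bounds $\E\exp(|X+Y|^\alpha/c^\alpha)$ by $(\E\exp(|X|^\alpha/a^\alpha))^{1/p}(\E\exp(|Y|^\alpha/b^\alpha))^{1/q}\le 2^{1/p}2^{1/q}=2$, which gives $\|X+Y\|_{\psi_\alpha}\le c$ as desired.

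For the comparison between $\psi_\alpha$ and $\psi_1$, let $c=\|X\|_{\psi_1}$ and put $d=(\log 2)^{-(1-\alpha)/\alpha}c$, so that $|X|^\alpha/d^\alpha=(\log2)^{1-\alpha}(|X|/c)^\alpha$. The crucial step, which I expect to be the only genuinely delicate point, is choosing the right linearization of $t\mapsto t^\alpha$: the naive Young/AM--GM bound $t^\alpha\le\alpha t+(1-\alpha)$ is tight at $t=1$ and turns out to be too lossy, producing a final constant strictly larger than $2$. Instead I would use concavity of $t\mapsto t^\alpha$ to take the tangent line at the point $t_0=\log 2$, namely $t^\alpha\le(1-\alpha)(\log2)^\alpha+\alpha(\log2)^{\alpha-1}t$; this is precisely the point at which the extremal example (a constant random variable) saturates the claimed constant. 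Multiplying by $(\log2)^{1-\alpha}$ and substituting $t=|X|/c$ collapses the bound neatly to $|X|^\alpha/d^\alpha\le(1-\alpha)\log2+\alpha|X|/c$.

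Finally I would exponentiate, take expectations, and apply Jensen's inequality in the form $\E\exp(\alpha|X|/c)\le(\E\exp(|X|/c))^\alpha\le 2^\alpha$ (valid since $x\mapsto x^\alpha$ is concave and $\E\exp(|X|/c)\le2$), obtaining $\E\exp(|X|^\alpha/d^\alpha)\le2^{1-\alpha}2^\alpha=2$ and hence $\|X\|_{\psi_\alpha}\le d$. The main obstacle is thus purely the identification of the correct tangent point $t_0=\log 2$: everything downstream is a routine combination of concavity and Jensen, and the value $\log 2$ is forced by the requirement that the constant-random-variable case be an equality (so that the constant $(1/\log2)^{(1-\alpha)/\alpha}$ cannot be improved).
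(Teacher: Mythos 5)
Your proof is correct and is in substance the paper's own argument: your tangent line to $t\mapsto t^\alpha$ at $t_0=\log 2$ is exactly Young's inequality $uv\le p^{-1}u^p+q^{-1}v^q$ with $p=1/\alpha$ specialized to $Y\equiv 1$ (whose $\psi_q$-norm contributes the $(\log 2)^{-1/q}$), which is precisely how the paper obtains the $\psi_\alpha$--$\psi_1$ comparison via the Orlicz--H\"older bound $\|XY\|_{\psi_1}\le\|X\|_{\psi_p}\|Y\|_{\psi_q}$, while your direct H\"older argument for subadditivity is the standard proof of the $\psi_1$ triangle inequality that the paper invokes together with the identity $\|X\|_{\psi_\alpha}=\||X|^\alpha\|_{\psi_1}^{1/\alpha}$. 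The only cosmetic difference is the closing step, where you use Jensen's inequality for the concave map $x\mapsto x^\alpha$ in place of the paper's convexity of $\exp$; both give the exact constant $2$, and your observation that constants saturate the bound (so $t_0=\log 2$ is forced) is also correct.
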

\begin{proof}
For $\alpha=1$, the first inequality reduces to the well known triangle inequality for $\|\cdot\|_{\psi_1}$. The general case follows easily from this special one and the identity $\|X\|_{\psi_\alpha} = \||X|^\alpha\|_{\psi_1}^{1/\alpha}$.

For the second inequality, note that by Young's inequality we have for $p > 1$ we have $|xy| \le p^{-1}|x|^p + q^{-1}|y|^q$, where $q$ is the conjugate of $p$, and so
by convexity, for any random variables $X,Y$,
\begin{displaymath}
\E \exp\Big( \frac{|XY|}{\|X\|_{\psi_p}\|Y\|_{\psi_q}}\Big) \le p^{-1}\E\exp\Big(\frac{|X|}{\|X\|_{\psi_p}}\Big) + q^{-1}\E\exp\Big(\frac{|Y|}{\|Y\|_{\psi_q}}\Big) \le 2,
\end{displaymath}
which implies that $\|XY\|_{\psi_1} \le \|X\|_{\psi_p}\|Y\|_{\psi_q}$. Now setting $Y = 1$ we get
\begin{displaymath}
\|X\|_{\psi_1} \le \|X||_{\psi_p} \Big(\frac{1}{\log 2}\Big)^{1/q}.
\end{displaymath}
\end{proof}

\begin{proof}[Proof of Propositon \ref{prop:geometric_erg_new}]
By Theorem 14.3.7. in \cite{MeynTwe} we have $\pi V < \infty$. By the drift condition \eqref{eq:geometric_ergodicity_drift_new} we get
$\pi V = \pi PV \le (1-\lambda)\pi V + b\pi(C)$, which implies \eqref{eq:piV_est_new}.

Set now $\tilde{V} = V\lambda^{-1}$, $\tilde{b} = b\lambda^{-1}$. Then \eqref{eq:geometric_ergodicity_drift_new} is satisfied also with $\tilde{V}, \tilde{b}$ instead of $V,b$ resp. Using this together with the assumption on $f$ one easily shows that for $\beta = 1$  the condition \eqref{war} with $\tilde{V},\tilde{b}$ instead of $V,b$ resp.  and $c=\kappa$ is also satisfied.

By the second part of Lemma \ref{le:Orlicz_new} we have $\|\tau_C \pi g\|_{\psi_\alpha} \le (\log 2)^{1-1/\alpha} \|\tau_C\|_{\psi_1} \pi|g|$. Thus
by the first part of the lemma
\begin{displaymath}
\|\sum_{k=0}^{\tau_C-1} |Z_k(f)|\|_{\psi_\alpha} \le \Big(\|\sum_{k=0}^{\tau_C-1} |Z_k(g)|\|_{\psi_\alpha}^\alpha + \frac{1}{(\log 2)^{1-\alpha}}\|\tau_C\|_{\psi_1}^\alpha (\pi|g|)^\alpha\Big)^{1/\alpha}.
\end{displaymath}
To prove the estimates on the parameters $\ccA,\ccB,\ccC,\ccD$ it is thus enough to use Theorem \ref{thmcond2} (with $\tilde{V},\tilde{b}$ and $\beta=1$) combined with the above bound and Lemma \ref{le:auxiliary_regeneration_new} (this lemma is applied with $V,b$ as it does not involve the function $g$).

It remains to prove the bound on $\pi|g|$.  For $\alpha \ge 1/2$ the function
$x \mapsto (\log x)^{(1-\alpha)/\alpha}$ is concave on $[1,\infty)$ (recall that by assumption $V \ge 1$), so the bound of Proposition \ref{prop:geometric_erg_new} follows from the assumption on $g$, Jensen's inequality and \eqref{eq:piV_est_new}. For $\alpha < 1/2$ the function $x \mapsto ((1-2\alpha)/\alpha + \log x)^{(1-\alpha)/\alpha}$ is concave. Thus we can write (using the inequality $(x+y)^s \ge x^s + y^s$ for $s \ge 1$ and $x,y \ge 0$),
\begin{displaymath}
\pi (\log V )^{(1-\alpha)/\alpha} \le \pi \Big(\log (V) + \frac{1-2\alpha}{\alpha}\Big)^{(1-\alpha)/\alpha} - \Big(\frac{1-\alpha}{\alpha}\Big)^{(1-\alpha)/\alpha}
\end{displaymath}
and apply Jensen's inequality to the first summand on the right hand side. The proof is again concluded by \eqref{eq:piV_est_new}.
\end{proof}

\paragraph{Remark} We would like to stress that drift conditions of the form \eqref{eq:geometric_ergodicity_drift_new} (with explicitly stated constants or constants which can be obtained from the proofs) have been verified for many models of practical interest, see e.g \cite{MengersenTweedie, RobertsTweedie, JarnerHansen,JohnsonJones,JonesHob}. There is also some literature concerning models for which Theorem \ref{thmcond2} can be applied with $\beta < 1$ (see \cite{DFMS,DGM}). In such models a modified drift condition holds, which implies a bound on $\|\tau\|_{\psi_\beta}$, which can be easily combined with Theorem \ref{thmcond2} to give a counterpart of Proposition \ref{prop:geometric_erg_new}. Since such a bound would depend on the form of the drift condition we restrict our attention to the most classical case given by \eqref{eq:geometric_ergodicity_drift_new}.

\paragraph{Examples}

We will now provide two examples related to potential applications of our results. They will both consider Markov chains used in the Metropolis-Hastings algorithm (on $\N$ and on $\R$). Let us first recall briefly the basic ideas behind this algorithm. Assume that $\pi$ is a density of a probability measure on $\mathcal{X}$, which is only known up to a factor, i.e. only the rations $\pi(x)/\pi(y)$ are known. Assume also that $Q(\cdot,\cdot)$ is a transition function of a Markov chain on $\mathcal{X}$ and that $Q(x,\cdot)$ has density $q(x,y)$. Then the Metropolis-Hastings algorithm of approximating $\pi$ relies on generating a Markov chain $X_n$, such that whenever $X_n = x$ then one first generates a new point $y$ according to $Q(x,\cdot)$ and sets $X_{n+1} = y$ or $X_{n+1} = x$ with probability respectively $\alpha(x,y)$ and $1-\alpha(x,y)$, where
\begin{displaymath}
\alpha(x,y) = \left\{\begin{array}{ccc}
\min\Big(\frac{\pi(y)q(y,x)}{\pi(x)q(x,y)},1\Big)&\;\textrm{if}\; &\pi(x)q(x,y) > 0\\
1&\;\textrm{if}\;&\pi(x)q(x,y) =0.
\end{array}
\right.
\end{displaymath}
One then easily checks that $X_n$ is a Markov chain admitting $\pi$ as an invariant measure.

\paragraph{Example 1. A sub-geometric measure on $\N$} Consider a probability measure $\pi$ on $\N$ (which we will identify with the sequence of weights $(\pi_i)_{i\in \N}$). Assume that for some $\rho < 1$ we have $0<\pi(i+1) \le \rho\pi(i)$. Let us consider a Metropolis-Hastings algorithm, starting from $0$ and such that
$q(0,1) = q(0,0) = 1/2$ and for $i > 0$, $q(i,i-1) = q(i,i+1) = 1/2$. Thus $C=\{0\}$ is an atom and \eqref{small} is satisfied with $\delta = 1$.

Consider a drift function given by $V(n) = A^{n+1}$ with $A> 1$ to be fixed later. We have for $i > 0$,
\begin{displaymath}
P V(i) \le \frac{1}{2}A^{i+1} + \frac{\rho}{2}A^{i+2} +\frac{1-\rho}{2}A^{i+1} \le (1-\lambda)V(i)
\end{displaymath}
for any $A \in (1, \rho^{-1})$ and
\begin{displaymath}
\lambda = \lambda_A = 1 - \frac{1}{2A} - \frac{\rho A}{2} - \frac{1-\rho}{2} > 0.
\end{displaymath}

Moreover, we have $P V(0) = (1-\rho/2)A + \rho A^2/2$. Thus the the drift condition \eqref{eq:geometric_ergodicity_drift_new} is satisfied with $\lambda$ defined above and $b = (A-1)/2$.
Note also that $K = V(0) = A$.
Consider now a function $g \colon \N \to \R$ such that for some $s > 0$ and $\kappa$,
\begin{displaymath}
|g(n)| \le \kappa (1+n)^s.
\end{displaymath}
Since  for $\alpha = 1/(s+1)$ we have $(1-\alpha)/\alpha = s$ and $\log V(n) = (1 + n)\log A$, we get
\begin{displaymath}
|g| \le \frac{\kappa}{(\log A)^s} \Big(\log V(n)\Big)^{\frac{1-\alpha}{\alpha}}.
\end{displaymath}
Using Proposition \ref{prop:geometric_erg_new} we obtain for $\tilde{\kappa} = \kappa/(\log A)^s$ and $f = g - \pi g$,
\begin{align*}
&\ccA, \ccC,\ccD  \le \ccA_{drift} = \ccC_{drift} = \tilde{\kappa}\frac{1}{\log \frac{1}{1-\lambda}}\max\Big(\frac{\log(2^{-1}(A-1)(1-\lambda)^{-1} + A)}{\log 2},1\Big) \times\\
&\times \Big[\Big(\frac{\log(A\lambda^{-1}+2^{-1}(A-1)\lambda^{-1})}{\log 2}\Big)^{\frac{s}{s+1}} + \frac{1}{(\log 2)^{s/(s+1)}} (\pi|g|/\tilde{\kappa})^{1/(s+1)}\Big]^{s+1},\\
&\ccB\le \ccB_{drift} = \tilde{\kappa}\frac{1}{\log \frac{1}{1-\lambda}}\max\Big(\frac{\log(\pi V + (2^{-1}(A-1)(1-\lambda)^{-1}+A)\pi_0)}{\log 2},1\Big)\times\\
&\times\Big[\Big(\frac{\log(\pi V\lambda^{-1}+2^{-1}(A-1)\lambda^{-1})}{\log 2}\Big)^{\frac{s}{s+1}}+\frac{1}{(\log 2)^{s/(s+1)}} (\pi|g|/\tilde{\kappa})^{1/(s+1)}\Big]^{s+1},\\
\end{align*}

Thus, using the fact that for $\delta = 1$, the $r$ defined by \eqref{wrona0} equals to 1, we get
\begin{align}\label{eq:example_geo}
\cca,\ccc \le 2^{s+1}\ccC_{drift},\ccb \le \Big((\max\{\ccA_{drift},\ccB_{drift}\})^{1/(s+1)} + \ccB_{drift}^{1/(s+1)}\Big)^{s+1}.
\end{align}

The quantities $\pi V$ and $\pi|g|$ can be estimated by Proposition \ref{prop:geometric_erg_new}, however in this case they can be also controlled just in terms of $\rho$ (by comparison with the geometric distribution with probability of success equal to $1-\rho$). We skip the standard details.

\paragraph{Example 2. Densities on $\R$ which are log-concave in tails} Let us now consider the Metropolis-Hastings algorithm of estimating an integral with respect to a probability measure on $\R$, which is log-concave in tails (see below for the definition). For such a measure an explicit drift condition \eqref{eq:geometric_ergodicity_drift_new} has been established in \cite{MengersenTweedie}. We remark that a generalization to measures on $\R^d$, satisfying some additional technical conditions (which can be easily verified for spherically symmetric measures) has been obtained in \cite{RobertsTweedie}. For clarity of the exposition we will stick to the one-dimensional case.

Here we will consider a special case of estimating a symmetric positive density $\pi$ on $\R$, which is log-concave in tails, i.e. such that there exists $\rho > 0$ and $x_1$ such that for all $y> x > x_1$
\begin{align}\label{eq:lc_in_tails_new}
\log \pi(x) - \log \pi(y) \ge \rho (y-x).
\end{align}

Moreover we will assume that $q(x,y) = q(x-y)$ for some symmetric positive density $q$ on $\R$. In this setting the arguments from \cite{MengersenTweedie} can be used to obtain an explicit form of drift function $V$ and constants $\lambda, b$ in \eqref{eq:geometric_ergodicity_drift_new}. We remark that the symmetry assumption on $\pi$ can be relaxed by imposing further restrictions on $q$. Since our purpose here is just to provide an illustration to our Proposition \ref{prop:geometric_erg_new} (and the concentration inequalities we will derive later), we will not consider the general case. Also, since this is just a question of scaling the measure, to simplify slightly the formulas we will assume that $\rho = 2$.

By  Lemmas 1.1. and 1.2. in \cite{MengersenTweedie} the chain is $\pi$-irreducible and strongly aperiodic and by the calculations in the proof of this lemma any compact set $C$ is a small set (i.e. it satisfies \eqref{small}), with $m=1$ and
\begin{align}\label{eq:form_of_delta_new}
\delta =  \pi(C)\frac{\inf_{x,y \in C} q(y-x)}{\sup_{x\in C} \pi(x)}
\end{align}
(with the measure $\nu = \pi(\cdot|C)$).
We will now set $C = [-x^\ast,x^\ast]$ for some $x^\ast > x_1$.
By arguments used in the proof of Theorem 3.2. in \cite{MengersenTweedie} one can easily obtain that \eqref{eq:geometric_ergodicity_drift_new} is satisfied with
\begin{align}\label{eq:MHdrift_new}
V(x) &= \exp(|x|+1),\nonumber\\
\lambda &= \int_0^{x^\ast} q(z)(1-e^{-z})^2dz - 2\int_{x^\ast}^\infty q(z)dz,\\
b &= \Big(1 + 2\int_{x^\ast}^\infty q(z)dz +2e^{x^\ast}\int_0^{x^\ast} q(z)dz\Big)e\nonumber
\end{align}
We remark that in \cite{MengersenTweedie} the Authors considered drift functions of the form $x\mapsto \exp(\mu |x|)$ for $0<\mu < \rho$ (recall that we consider the case $\rho = 2$). Our modification (the choice of $\mu = 1$ and multiplication by $e$) aims at simplifying the formulas we are about to obtain from Proposition \ref{prop:geometric_erg_new}.

Note that for $x^\ast$ sufficiently large $\lambda > 0$, moreover for every $x^\ast$, $b < \infty$. Note also that for a specific choice of $q$, the values of $\lambda, b$ can be estimated or computed (as is the case e.g. for $q(x) = 2^{-1}\exp(-|x|)$, a choice which is very convenient for simulations due to explicit formulas on the distribution function of the one sided exponential distribution).

Assume now that $g\colon \R \to \R$ is a function of polynomial growth, more specifically that for some constants $s > 0$ and $\kappa$,
\begin{align}\label{eq:growthcondition_new}
|g(x)| \le \kappa (1 + |x|)^s.
\end{align}
Since for $\alpha = 1/(s+1)$ we have $(1-\alpha)/\alpha = s$ and $ \log V(x) = (1+|x|)$, we get
\begin{displaymath}
|g(x)| \le \kappa\Big(\log  V(x)\Big)^{\frac{1-\alpha}{\alpha}}.
\end{displaymath}

Finally set $f = g - \pi g$. Then by Proposition \ref{prop:geometric_erg_new} we obtain that for $\alpha = 1/(s+1)$, and the chain starting from $0$ we obtain
\begin{align*}
\ccA\le& \ccA_{drift} = \kappa\frac{1}{\log \frac{1}{1-\lambda}}\frac{\log(b(1-\lambda)^{-1} + e^{x^\ast+1})}{\log 2} \times\\
&\phantom{aaaaaaaa}\times \Big[\Big(\frac{\log(e\lambda^{-1}+b\lambda^{-1})}{\log 2}\Big)^{\frac{s}{s+1}} + \frac{1}{(\log 2)^{s/(s+1)}} (\pi|g|)^{1/(s+1)}\Big]^{s+1},\\
\ccB\le& \ccB_{drift} = \kappa\frac{1}{\log \frac{1}{1-\lambda}}\frac{\log(\pi V + b(1-\lambda)^{-1}+e^{x^\ast+1})}{\log 2}\times\\
&\phantom{aaaaaaaa}\times\Big[\Big(\frac{\log(\pi V\lambda^{-1}+b\lambda^{-1})}{\log 2}\Big)^{\frac{s}{s+1}}+\frac{1}{(\log 2)^{s/(s+1)}} (\pi|g|)^{1/(s+1)}\Big]^{s+1},\\
\ccC,\ccD  \le& \ccC_{drift}=\kappa\frac{1}{\log \frac{1}{1-\lambda}}\frac{\log(b(1-\lambda)^{-1}+e^{x^\ast+1})}{\log 2}\times\\
&\phantom{aaaaaaaa}\times\Big[\Big(\frac{\log(b\lambda^{-1}+e^{x^\ast+1}\lambda^{-1})}{\log 2}\Big)^{\frac{s}{s+1}} +  \frac{1}{(\log 2)^{s/(s+1)}} (\pi|g|)^{1/(s+1)}\Big]^{s+1}.
\end{align*}

We also get an estimate on $\pi|g|$ and $\pi V$, namely
\begin{displaymath}
\pi|g| \le \left\{
\begin{array}{l}
\kappa \Big(\log(b\pi(C)\lambda^{-1})\Big)^{s}\quad{\rm for}\;s\le 1,\\
\kappa \Big(\log( b\pi(C)\lambda^{-1}) + (s-1)\Big)^{s} - (s-1)^{s}\quad{\rm for}\; s > 1.
\end{array}
\right.
\end{displaymath}
and
\begin{displaymath}
\pi V \le b\pi(C)\lambda^{-1}.
\end{displaymath}

Now Proposition \ref{pros1} together with the inequality $\ccA_{drift} \le \ccC_{drift}$ imply
\begin{align}\label{eq:MH1_new}
\cca,\ccc\le& (2r)^{s+1}\ccC_{drift} \le \kappa\Big(2\frac{\log(\frac{6}{2-\delta})}{\log(\frac{2}{2-\delta})}\Big)^{s+1}\ccC_{drift}\\
\ccb\le& r^{s+1}\Big(\max\{\ccB_{drift},\ccC_{drift}\}^{\frac{1}{s+1}}+\ccC_{drift}^{\frac{1}{s+1}}\Big)^{s+1} \nonumber\\
\le& \kappa\Big(\frac{\log(\frac{6}{2-\delta})}{\log(\frac{2}{2-\delta})}\Big)^{s+1}\Big(\max\{\ccB_{drift},\ccC_{drift}\}^{\frac{1}{s+1}}+\ccC_{drift}^{\frac{1}{s+1}}\Big)^{s+1},\nonumber
\end{align}
where $r$ is defined by \eqref{wrona0}.

Note that the values of the parameters appearing in the above estimates are directly computable from \eqref{eq:MHdrift_new}, the only exception being $\delta$ (given by \eqref{eq:form_of_delta_new}), the estimation of which may require some additional knowledge concerning the density $\pi$ (other than the ratios of $\pi(x)/\pi(y)$ for $x,y \in \R$). Note also that the parameter $\lambda$ improves (increases) as we increase $x^\ast$, contrary to the parameters $b$ and $\delta$. This leaves some room for rather nontrivial optimization in $x^\ast$, which however would be specific to a given form of $\pi$ and $q$.

\section{The main tool. Exponential inequalities in the independent case}\label{sect2}

In this section we develop some inequalities for sums of
independent (or one dependent) unbounded random variables, which
we will later combine with the renewal approach to obtain results
for Markov chains. Let us remark that there is a vast literature on concentration inequalities for sums of i.i.d. unbounded random variables with finite $\psi_\alpha$ norms for $\alpha \le 1$ (see in particular a series of papers by Borovkov \cite{Bor1,Bor2}). However, the tail inequalities, provided by those estimates quite often involve constants depending on the distribution of the underlying sequence, or when the constants depend only on $\alpha$, also the constant in front of the sub-gaussian coefficient is not universal (see e.g. inequality (1.4.) in \cite{MPR} and note that it cannot hold with $c_1$ being a universal constant multiplied by the variance and $c_2 \simeq \|X_i\|_{\psi_\gamma}$, as is witnessed e.g. by considering symmetric variables with $\P(X_i = \pm 1) = 1/n$ and $\P(X_i = 0) = 1 - 2/n$, $i=1,\ldots,n$, $n\to \infty$). For this reason, while such inequalities often perform better than ours for very large $t$, they are not well suited for the applications we have in mind. We do believe that inequalities which would work in our framework could be obtained by following the proofs from the aforementioned papers, but since the arguments we really need reduce to truncation and Bernstein type bounds on the Laplace transform, we prefer to provide complete proofs. Let us also stress that the results of this section are developed solely as technical tools for proving the corresponding results in the Markov chain setting and we do not claim any novelty of the methods we use.

The first lemma will let us truncate the
variables and reduce the problem to well known inequalities for
bounded summands. Contrary to the approach in \cite{Rad} at this
point we do not need Talagrand's inequalities and thus we are able
to obtain explicit constants (this happens at the cost of
weakening the generality of the result in the independent case,
but improves the inequalities which are then applied to Markov
chains). To avoid formal problems below we adopt the convention `$\log n := \log (n\vee e) $'.

\begin{lema}\label{unbounded_part} Let $\xi_0,\ldots,\xi_{n-1}$ be i.i.d. random
variables such that $\E\exp(c^{-\alpha}|\xi_i|^\alpha) \le 2$ for
some $\alpha \in (0,1]$. Let $M = c(3\alpha^{-2}\log n)^{1/\alpha}$ and $Y_i = \xi_i 1_{|\xi_i|>M}$ .
Then for any $0\le \lambda \le 1/(2^{1/\alpha}c)$,
\begin{displaymath}
\E\exp\Big(\lambda^\alpha \sum_{i=0}^{n-1}(|Y_i|+
\E|Y_i|)^\alpha\Big) \le \exp(8).
\end{displaymath}
\end{lema}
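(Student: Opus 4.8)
The plan is to use the subadditivity of $x \mapsto x^\alpha$ on $[0,\infty)$ (valid for $\alpha \in (0,1]$) to separate the random fluctuations from the centering term. Writing $\mu = \E|Y_0|$ — which is finite and common to all $i$, since the $\xi_i$ are i.i.d. and $\E\exp(c^{-\alpha}|\xi_i|^\alpha)\le 2$ forces all moments to be finite — we have $(|Y_i| + \mu)^\alpha \le |Y_i|^\alpha + \mu^\alpha$, hence by independence
\begin{displaymath}
\E\exp\Big(\lambda^\alpha\sum_{i=0}^{n-1}(|Y_i|+\E|Y_i|)^\alpha\Big) \le e^{\lambda^\alpha n\mu^\alpha}\,\E\exp\Big(\lambda^\alpha\sum_{i=0}^{n-1}|Y_i|^\alpha\Big) = e^{\lambda^\alpha n\mu^\alpha}\big(\E\exp(\lambda^\alpha|Y_0|^\alpha)\big)^n.
\end{displaymath}
It then suffices to bound the two factors separately, and the whole point is that the truncation level $M=c(3\alpha^{-2}\log n)^{1/\alpha}$ is tuned so that each factor is $O(1)$ uniformly in $n$ and $\alpha$.

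For the product factor I would use $|Y_0|^\alpha = |\xi_0|^\alpha 1_{|\xi_0|>M}$, so that $\E\exp(\lambda^\alpha|Y_0|^\alpha) = 1 + \E[(e^{\lambda^\alpha|\xi_0|^\alpha}-1)1_{|\xi_0|>M}]$. On the event $\{|\xi_0|>M\}$ I would exploit $\lambda^\alpha \le 1/(2c^\alpha)$ by writing $e^{\lambda^\alpha|\xi_0|^\alpha} \le e^{|\xi_0|^\alpha/c^\alpha}e^{-|\xi_0|^\alpha/(2c^\alpha)} \le e^{|\xi_0|^\alpha/c^\alpha}e^{-M^\alpha/(2c^\alpha)}$, so the hypothesis $\E e^{|\xi_0|^\alpha/c^\alpha}\le 2$ gives $\E[(e^{\lambda^\alpha|\xi_0|^\alpha}-1)1_{|\xi_0|>M}]\le 2e^{-M^\alpha/(2c^\alpha)}$. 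Since $M^\alpha/(2c^\alpha) = \tfrac{3}{2\alpha^2}\log n$ and $1+x\le e^x$, the $n$-th power is at most $\exp(2ne^{-3\log n/(2\alpha^2)})$; the factor $\alpha^{-2}\ge1$ in the exponent guarantees (with $n\le e^{\log n}$ under the convention $\log n=\log(n\vee e)$) that $n e^{-3\log n/(2\alpha^2)}\le e^{\log n\,(1-3/(2\alpha^2))}\le e^{-1/2}$, so this factor is at most $e^2$.

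The centering factor $e^{\lambda^\alpha n\mu^\alpha}$ is where the real work lies. I would bound the truncated first moment by again extracting the decay: on $\{|\xi_0|>M\}$ one has $1\le e^{|\xi_0|^\alpha/(2c^\alpha)}e^{-M^\alpha/(2c^\alpha)}$, and then $|\xi_0|e^{|\xi_0|^\alpha/(2c^\alpha)} = \big(|\xi_0|e^{-|\xi_0|^\alpha/(2c^\alpha)}\big)e^{|\xi_0|^\alpha/c^\alpha} \le c(2/(\alpha e))^{1/\alpha}e^{|\xi_0|^\alpha/c^\alpha}$, using the elementary maximization $\max_{t\ge0}te^{-t^\alpha/(2c^\alpha)} = c(2/(\alpha e))^{1/\alpha}$. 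Together with $\E e^{|\xi_0|^\alpha/c^\alpha}\le2$ this gives $\mu\le 2c(2/(\alpha e))^{1/\alpha}e^{-M^\alpha/(2c^\alpha)}$, whence, inserting $\lambda^\alpha\le1/(2c^\alpha)$ and the value of $M$,
\begin{displaymath}
\lambda^\alpha n\mu^\alpha \le \frac{2^\alpha}{\alpha e}\,n\,e^{-3\log n/(2\alpha)} \le \frac{2}{\alpha e}e^{-1/(2\alpha)},
\end{displaymath}
where the last step again uses $n\le e^{\log n}$ and $1-3/(2\alpha)\le -1/(2\alpha)$ for $\alpha\le1$. The maximum of $\tfrac{2}{\alpha e}e^{-1/(2\alpha)}$ over $\alpha\in(0,1]$ equals $4e^{-2}<1$ (attained at $\alpha=1/2$).

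Combining the two estimates yields $\E\exp(\lambda^\alpha\sum_i(|Y_i|+\E|Y_i|)^\alpha)\le e^{4e^{-2}}\cdot e^2 \le e^8$, with ample room to spare. The main obstacle — and the reason the exponent $3\alpha^{-2}$ (rather than merely $\alpha^{-1}$) appears in $M$ — is the uniform control of the centering factor: the truncated mean $\E[|\xi_0|1_{|\xi_0|>M}]$ unavoidably carries $\alpha$-dependent prefactors such as $(2/(\alpha e))^{1/\alpha}$ that blow up as $\alpha\to0$, and one must verify that the super-exponential-in-$n$ decay $e^{-3\log n/(2\alpha^2)}$ beats these prefactors simultaneously for all $\alpha\in(0,1]$ and all $n$ (including small $n$, where the convention $\log n=\log(n\vee e)$ keeps the decay active).
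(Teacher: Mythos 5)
Your proof is correct, and while it shares the paper's skeleton (subadditivity of $x\mapsto x^\alpha$ plus independence to factor the expectation into a centering term $e^{\lambda^\alpha n\mu^\alpha}$ and an $n$-th power of a truncated moment generating function), the two technical estimates are carried out by a genuinely different, more elementary route. The paper bounds $\E|\xi_0|1_{|\xi_0|>M}$ by integrating the tail bound $\P(|\xi_0|>t)\le 2e^{-t^\alpha/c^\alpha}$ and invoking a dedicated sub-lemma (its Lemma 4, proved via a change of variables and an incomplete-gamma estimate, valid for $M\ge 2^{1/\alpha}\alpha^{-1/\alpha}c$) to get $\E|\xi_0|1_{|\xi_0|>M}\le 4Me^{-M^\alpha/c^\alpha}$; and it controls $\E\exp(\lambda^\alpha|\xi_0|^\alpha 1_{|\xi_0|>M})$ through the layer-cake representation, splitting the integral at $t_0=e^{\lambda^\alpha M^\alpha}$ to arrive at $1+4/n$ and hence $(1+4/n)^n\le e^4$. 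You instead handle both factors by pointwise splitting of the exponential weight on the event $\{|\xi_0|>M\}$: absorbing half the Orlicz weight to extract the factor $e^{-M^\alpha/(2c^\alpha)}$, and for the mean using the one-line maximization $\max_{t\ge 0} te^{-t^\alpha/(2c^\alpha)}=c(2/(\alpha e))^{1/\alpha}$. This avoids both the tail integration and the layer-cake argument entirely, at the cost of only half the exponential decay rate ($e^{-M^\alpha/(2c^\alpha)}$ versus the paper's $e^{-M^\alpha/c^\alpha}$, with an $\alpha$-dependent constant prefactor in place of the paper's factor $M$) — which the generous tuning $M^\alpha = 3c^\alpha\alpha^{-2}\log n$ comfortably absorbs, and indeed your final constant $e^{2+4e^{-2}}$ beats the stated $e^8$. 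One thing your route gives up: the paper's Lemma 4 is reused later (in the proof of its Proposition 4 on empirical processes, to bound $\E F(\xi_0)1_{F(\xi_0)>M}$), so the paper's tail-integration bound is a reusable tool, whereas your argument is self-contained but local to this lemma. Your uniformity checks in $\alpha\in(0,1]$ and in small $n$ under the convention $\log n=\log(n\vee e)$ are all sound (in particular $1-3/(2\alpha)\le -1/(2\alpha)$ and the maximum $4e^{-2}$ at $\alpha=1/2$ are verified correctly).
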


\begin{proof}
Note that by independence
\begin{align}\label{nierr0}
\E\exp\Big(\lambda^\alpha \sum_{i=0}^{n-1}(|Y_i|^\alpha +
(\E|Y_i|)^\alpha)\Big)= \exp\Big(n\lambda^\alpha (\E|\xi_0|
1_{|\xi_0|>M})^\alpha\Big)\Big(\E\exp(\lambda^\alpha
|\xi_0|^\alpha 1_{|\xi_0|>M})\Big)^{n}.
\end{align}
By Markov's inequality
\be\label{nier1} \P(|\xi_0|>t)\ls
e^{-\frac{t^{\alpha}}{c^{\alpha}}}\E\exp(\frac{|\xi_i|^{\alpha}}{c^{\alpha}})\ls
2e^{-\frac{t^{\alpha}}{c^{\alpha}}}. \ee

We thus obtain that \ba
&& \E|\xi_i|1_{|\xi_i|>M}= \int^{\infty}_0\P(|\xi_i|1_{|\xi_i|>M}>t)dt=M P(|\xi_i|>M)+\int^{\infty}_M \P(|\xi_i|>t)dt\ls \nonumber\\
\label{nier2} && \ls 2Me^{-\frac{M^{\alpha}}{c^{\alpha}}} +
2\int^{\infty}_{M}e^{-\frac{t^{\alpha}}{c^{\alpha}}}dt. \ea To
bound the last term in (\ref{nier2}) we need the following lemma.
\bl\label{lem1} If $M\gs 2^{1/\alpha}\alpha^{-\frac{1}{\alpha}}c$ then
$$
\int^{\infty}_{M} \exp(-\frac{t^{\alpha}}{c^{\alpha}})dt\ls
Me^{-\frac{M^{\alpha}}{c^{\alpha}}}.
$$
\el
\begin{proof}
The change of variables $s=t^{\alpha}/c^{\alpha}$ implies that
$$
\int^{\infty}_{M}e^{-\frac{t^{\alpha}}{c^{\alpha}}}dt=\frac{c}{\alpha}\int^{\infty}_{\frac{M^{\alpha}}{c^{\alpha}}}s^{\frac{1}{\alpha}-1}e^{-s}ds.
$$
Then again by the change of variables we deduce that
$$
\int^{\infty}_{M}e^{-\frac{t^{\alpha}}{c^{\alpha}}}dt=\frac{c}{\alpha}e^{-\frac{M^{\alpha}}{c^{\alpha}}}
\int^{\infty}_0(\frac{M^{\alpha}}{c^{\alpha}}+s)^{\frac{1}{\alpha}-1}e^{-s}ds.
$$
Using the inequality $1+x\ls \exp(x)$ we have
\begin{align*}
&\int^{\infty}_0(\frac{M^{\alpha}}{c^{\alpha}}+s)^{\frac{1}{\alpha}-1}e^{-s}ds=\frac{M^{1-\alpha}}{c^{1-\alpha}}\int^{\infty}_0 (1+\frac{c^{\alpha}}{M^{\alpha}}s)^{\frac{1}{\alpha}-1}e^{-s}ds\ls\\
&=\frac{M^{1-\alpha}}{c^{1-\alpha}}\int^{\infty}_0
\exp(-s(1-(\frac{1}{\alpha}-1)\frac{c^{\alpha}}{M^{\alpha}}))ds=
\frac{M^{1-\alpha}}{c^{1-\alpha}}(1-(\frac{1}{\alpha}-1)\frac{c^{\alpha}}{M^{\alpha}})^{-1}.
\end{align*}
Since by assumption  $M^{\alpha}/c^{\alpha}\ge 2/\alpha$, the right hand side above is bounded by
$M\alpha/c$ and thus
$$
\int^{\infty}_{M}\exp(-\frac{t^{\alpha}}{c^{\alpha}})dt\ls
M\exp(-\frac{M^{\alpha}}{c^{\alpha}}).
$$
\end{proof}
Plugging the estimate from Lemma \ref{lem1} into (\ref{nier2}) we
conclude that
$$
\exp(\lambda^{\alpha}(\E |\xi_0|1_{|\xi_0|>M})^{\alpha})\ls
\exp(\lambda^{\alpha}[4 M
e^{-\frac{M^{\alpha}}{c^{\alpha}}}]^{\alpha}).
$$
By definition $M=c(3\alpha^{-2}\log n)^{1/\alpha} \ge c(2\alpha^{-2}\log n + \alpha^{-2})^{1/\alpha}$,
therefore
$$
\exp(n\lambda^{\alpha}(\E |\xi_0|1_{|\xi_0|>M})^{\alpha})\ls
\exp(3\cdot 4^\alpha n\lambda^\alpha c^\alpha\alpha^{-2}\frac{\log n}{n^{2\alpha^{-1}}}e^{-\alpha^{-1}} ) \le
\exp(3\cdot 4^\alpha \lambda^\alpha c^\alpha \frac{4}{e^2}\frac{\log n}{n} ),
$$
where we used the fact that $\sup_{\alpha \in (0,1)}\alpha^{-2}\exp(-\alpha^{-1}) = 4/e^2$.
Since $e^2 \ge 7$, we see that whenever $\lambda^\alpha c^\alpha \le 1/2$, we have
\begin{align}\label{nier3}
\exp(n\lambda^{\alpha}(\E |\xi_0|1_{|\xi_0|>M})^{\alpha}) \le \exp(4).
\end{align}

Now we proceed to the estimate
on $\E\exp(\lambda^{\alpha}|\xi_0|^{\alpha} 1_{|\xi_0|> M}|)$.

There is the following representation

$$
\E\exp(\lambda^{\alpha}
|\xi_0|^{\alpha}1_{|\xi_0|>M})=1+\int^{\infty}_{1}
\P(|\xi_0|1_{|\xi_0|>M}>\lambda^{-1}\log^{\frac{1}{\alpha}} t)dt.
$$
Let $t_0=\exp(\lambda^{\alpha} M^{\alpha})$, i.e.
$M=\lambda^{-1}\log^{\frac{1}{\alpha}}t_0$. Observe that one can
split the integral bound into two terms
$$
\int^{\infty}_{1}\P(|\xi_0|1_{|\xi_0|>M}>\lambda^{-1}\log^{\frac{1}{\alpha}}
t))dt=
(t_0-1)\P(|\xi_0|>M)+\int^{\infty}_{t_0}\P(|\xi_0|>\lambda^{-1}\log^{\frac{1}{\alpha}}
t)dt.
$$
Applying (\ref{nier1}) again we obtain that
\begin{align*}
& \int^{\infty}_{1}\P(|\xi_0|1_{|\xi_0|>M}>\lambda^{-1}\log^{\frac{1}{\alpha}} t))dt\ls \\
&\ls 2t_0 e^{-\frac{M^{\alpha}}{c^{\alpha}}}+2\int^{\infty}_{t_0} t^{-\lambda^{-\alpha} c^{-\alpha}}dt\ls \\
&\ls 2 t_0 e^{-\frac{M^{\alpha}}{c^{\alpha}}}+
2\frac{\lambda^{\alpha} c^{\alpha}}{1-\lambda^{\alpha}
c^{\alpha}}t_0^{1-\lambda^{-\alpha} c^{-\alpha}}.
\end{align*}
Since $t_0=\exp(\lambda^{\alpha}{M^{\alpha}})$  we have
$t_0e^{-\frac{M^{\alpha}}{c^{\alpha}}}=t_0^{1-\lambda^{-\alpha}c^{-\alpha}}$
and thus we conclude that whenever $\lambda c <1$,
$$
\E\exp(\lambda^{\alpha} |\xi_0|^{\alpha}1_{|\xi_0|>M})\ls
1+\frac{2}{1-\lambda^{\alpha} c^{\alpha}}\exp(-(1-\lambda^{\alpha}
c^{\alpha})\frac{M^{\alpha}}{c^{\alpha}}).
$$
Thus requiring that $\lambda^{\alpha} c^{\alpha}\ls  1/2$ we
guarantee that
$$
\E\exp(\lambda^{\alpha} |\xi_0|^{\alpha}1_{|X_i|>M})\ls
1+4e^{-\frac{M^{\alpha}}{2c^{\alpha}}}.
$$
Since $M\ge c(2\log n)^{\frac{1}{\alpha}}$ we
deduce
$$
\E\exp(\lambda^{\alpha} |\xi_0|^{\alpha}1_{|\xi_0|>M})\ls
1+\frac{4}{n}.
$$
This gives \be(\E\exp(\lambda^\alpha
|X_1|^{\alpha}1_{|X_1|>M}))^{n}\ls (1+\frac{4}{n})^{n}\ls \exp(4),
\ee which together with (\ref{nierr0}) and (\ref{nier3}) ends the
proof.
\end{proof}

\begin{lema}\label{Laplace_bounded}
Let $\xi_0,\ldots,\xi_{n-1}$ be i.i.d. mean zero random variables
such that $|\xi_i|\ls M$ for $i\in\{0,...,n-1\}$ with variance $\sigma^2$ and let $T\le {n}$ be a
stopping time (with respect to some filtration $\mathcal{F}_i
\supseteq \sigma(\xi_0,\ldots,\xi_{i-1})$ such that $\xi_i$ is
independent of $\mathcal{F}_i$). Then for every $a > 0$,
$\varepsilon \in (0,1)$ and
\be\label{mocny}
\lambda \le \min\Big(\frac{3}{2M(1+\varepsilon)},
\frac{\sqrt{\varepsilon}}{(1+\varepsilon)\sigma\sqrt{\|(T-a)_+\|_{\psi_1}}}\Big),
\ee
we have
\begin{displaymath}
\E\exp(|\lambda \sum_{i=1}^{T} \xi_{i-1}|) \le
2^{1+\varepsilon/(1+\varepsilon)}\exp\Big(\frac{\lambda^2
a(1+\varepsilon)\sigma^2}{2(1 - \lambda(1+\varepsilon) M/3)}\Big)
\end{displaymath}
\end{lema}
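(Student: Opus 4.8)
The plan is to reduce the bound to the classical Bernstein estimate on the cumulant generating function of a single bounded summand, and to decouple the randomness of the stopping time $T$ from the sum by means of an exponential martingale together with H\"older's inequality. Throughout I write $S_k = \sum_{i=0}^{k-1}\xi_i$ (so that $\sum_{i=1}^T \xi_{i-1} = S_T$) and $\psi(u) = \log \E \exp(u\xi_0)$. Since the $\xi_i$ are centered and satisfy $|\xi_i| \le M$, expanding the exponential and using $|\E\xi_0^k| \le M^{k-2}\sigma^2$ for $k\ge 2$ together with $k! \ge 2\cdot 3^{k-2}$ yields the standard estimate $\psi(u) \le \frac{u^2\sigma^2}{2(1-uM/3)}$ for $0 \le u < 3/M$; in particular $\psi \ge 0$ by Jensen.

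The first step is to fix $p = 1+\varepsilon$ and observe that $N_k = \exp(p\lambda S_k - k\psi(p\lambda))$ is a martingale with respect to $(\mathcal{F}_k)$: since $\xi_k$ is independent of $\mathcal{F}_k$ while $S_k$ is $\mathcal{F}_k$-measurable, one has $\E[N_{k+1}\mid\mathcal{F}_k] = N_k\,\E\exp(p\lambda\xi_k)\exp(-\psi(p\lambda)) = N_k$. The constraint $\lambda \le 3/(2M(1+\varepsilon))$ guarantees $p\lambda < 3/M$, so $\psi(p\lambda)$ is finite, and since $T \le n$ is a bounded stopping time, the optional stopping theorem gives $\E N_T = 1$.

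The second and key step is the decoupling. Let $q = (1+\varepsilon)/\varepsilon$ be the conjugate exponent of $p$. Starting from the pointwise identity $\exp(\lambda S_T) = N_T^{1/p}\exp(T\psi(p\lambda)/p)$ and applying H\"older's inequality (using $\E N_T = 1$ and $q/p = 1/\varepsilon$), I obtain
\[
\E\exp(\lambda S_T) \le (\E N_T)^{1/p}\Big(\E\exp(\tfrac{q}{p}T\psi(p\lambda))\Big)^{1/q} = \Big(\E\exp(\tfrac{1}{\varepsilon} T\psi(p\lambda))\Big)^{1/q}.
\]
Because $\psi(p\lambda)\ge 0$ and $T \le a + (T-a)_+$, the last expectation factorizes as $\exp(\tfrac{a}{\varepsilon}\psi(p\lambda))\,\E\exp(\tfrac{1}{\varepsilon}\psi(p\lambda)(T-a)_+)$. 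The two constraints on $\lambda$ are exactly what is needed to bound the second factor by $2$: from $\lambda \le 3/(2M(1+\varepsilon))$ one gets $1 - p\lambda M/3 \ge 1/2$, hence $\psi(p\lambda) \le (p\lambda)^2\sigma^2$, and then $\lambda \le \sqrt{\varepsilon}/((1+\varepsilon)\sigma\sqrt{\|(T-a)_+\|_{\psi_1}})$ forces $\psi(p\lambda)/\varepsilon \le 1/\|(T-a)_+\|_{\psi_1}$, so that $\E\exp(\tfrac{1}{\varepsilon}\psi(p\lambda)(T-a)_+) \le \E\exp((T-a)_+/\|(T-a)_+\|_{\psi_1}) \le 2$ by the very definition of the Orlicz norm.

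Collecting the estimates and using $\varepsilon q = 1+\varepsilon$ together with the Bernstein bound on $\psi(p\lambda)$ produces
\[
\E\exp(\lambda S_T) \le 2^{1/q}\exp\Big(\tfrac{a}{1+\varepsilon}\psi(p\lambda)\Big) \le 2^{\varepsilon/(1+\varepsilon)}\exp\Big(\frac{\lambda^2 a(1+\varepsilon)\sigma^2}{2(1-\lambda(1+\varepsilon)M/3)}\Big).
\]
Applying the identical argument to the variables $-\xi_i$ (which satisfy the same hypotheses) controls $\E\exp(-\lambda S_T)$ by the same quantity, and since $\exp|\lambda S_T| \le \exp(\lambda S_T) + \exp(-\lambda S_T)$, summing the two bounds yields the prefactor $2\cdot 2^{\varepsilon/(1+\varepsilon)} = 2^{1+\varepsilon/(1+\varepsilon)}$ and completes the proof. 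I expect the \emph{main obstacle} to be the third step: the right choice of the inflated parameter $p\lambda = (1+\varepsilon)\lambda$ and of the H\"older exponents $p,q$ is precisely what makes the tail of the stopping time contribute only the harmless constant factor $2$ while leaving the subgaussian exponent intact up to the factor $(1+\varepsilon)$; the degenerate case $\|(T-a)_+\|_{\psi_1}=0$ is trivial and handled separately.
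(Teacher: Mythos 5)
Your proof is correct and follows essentially the same route as the paper's: the exponential martingale normalized by the Laplace transform, optional stopping at the bounded time $T$, H\"older's inequality with exponents $1+\varepsilon$ and $(1+\varepsilon)/\varepsilon$, the classical Bernstein bound on $\psi(p\lambda)$, and the split $T \le a + (T-a)_+$ combined with the definition of the $\psi_1$-norm. The only cosmetic differences are that you verify directly that \eqref{mocny} suffices (via $1-\lambda(1+\varepsilon)M/3 \ge 1/2$, hence $\psi(p\lambda)\le (p\lambda)^2\sigma^2$) instead of passing through the paper's intermediate quadratic condition \eqref{slaby} --- a shortcut the paper itself notes parenthetically --- and that you make explicit the symmetrization $\exp(|\lambda S_T|)\le \exp(\lambda S_T)+\exp(-\lambda S_T)$ producing the prefactor $2^{1+\varepsilon/(1+\varepsilon)}$, which the paper leaves implicit.
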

\begin{proof}
Let us consider the martingale
\begin{displaymath}
M_k = \frac{\exp(\lambda(1+\varepsilon)\sum_{i=1}^k
\xi_{i-1})}{(\E\exp(\lambda(1+\varepsilon) \xi_0))^k}.
\end{displaymath}
By Doob's theorem $\E M_T = 1$ and thus by the H\"older inequality
\begin{align*}
\E\exp(\lambda\sum_{i=1}^T\xi_{i-1}) &= \E
\exp(\lambda\sum_{i=1}^T\xi_{i-1})(\E\exp(\lambda(1+\varepsilon)\xi_0))^{-T/(1+\varepsilon)}
(\E\exp(\lambda (1+\varepsilon)\xi_0))^{T/(1+\varepsilon)}\\
&\le (\E M_T)^{1/(1+\varepsilon)}\Big(\E (\E\exp(\lambda
(1+\varepsilon)\xi_0))^{T/\varepsilon}\Big)^{\varepsilon/(1+\varepsilon)}\\
&=\Big(\E (\E\exp(\lambda
(1+\varepsilon)\xi_0))^{T/\varepsilon}\Big)^{\varepsilon/(1+\varepsilon)}.
\end{align*}
The classical Bernstein (e.g. \cite{Bou}) bound gives
\begin{displaymath}
\E\exp(\lambda (1+\varepsilon)\xi_0) \le
\exp\Big(\frac{\lambda^2(1+\varepsilon)^2\sigma^2}{2(1-\lambda(1+\varepsilon)M/3)}\Big)
\end{displaymath}
and thus
\begin{align*}
\E\exp(\lambda\sum_{i=1}^T\xi_{i-1}) &\le \Big(\E
\exp\Big(\frac{\lambda^2
(1+\varepsilon)^2\sigma^2T}{2\varepsilon(1-\lambda(1+\varepsilon)M/3)}\Big)\Big)^{\varepsilon/(1+\varepsilon)}\\
&=
\exp\Big(\frac{\lambda^2(1+\varepsilon)\sigma^2a}{2(1-\lambda(1+\varepsilon)M/3)}\Big)\Big(\E
\exp\Big(\frac{\lambda^2
(1+\varepsilon)^2\sigma^2(T-a)_+}{2\varepsilon(1-\lambda(1+\varepsilon)M/3)}\Big)\Big)^{\varepsilon/(1+\varepsilon)}.
\end{align*}
Thus if
\be\label{slaby}
\lambda\ls \frac{3}{M(1+\va)}(\frac{M^2\va}{9\|(T-a)_{+}\|_{\psi_1}\sigma^2}
(-1+\sqrt{1+\frac{18\sigma^2\|(T-a)_{+}\|_{\psi_1}}{M^2\va}}))
\ee
then the inequality in question holds. Note that (\ref{mocny}) implies (\ref{slaby}) (one can also directly see that (\ref{mocny}) implies the assertion of the lemma).
\end{proof}

The next proposition gives a bound on the tail of sums of
independent or one-dependent random variables with finite
$\psi_\alpha$ norms. Its second part may seem to be formulated in
a somewhat artificial way, however when dealing with the Markov
case this formulation will help us introduce the asymptotic
variance of the additive functional to the inequalities.

\begin{prop}\label{thm1}
(i) Let $\xi_0,\ldots,\xi_{n-1}$ be a one dependent sequence of
mean zero, variance $\sigma^2$ random variables, such that $\E
\exp(c^{-\alpha}|\xi_i|^\alpha) \le 2$ and let $m$ be a positive
integer. Let $M=c(3\alpha^{-2}\log(n/m))^{1/\alpha}$,
then for any $t \ge 0$,
\begin{displaymath}
\P(\sup_{k < n/m}|\sum^{k}_{i=1} \xi_{i-1} |>t)\ls 2e^{8}e^{-\frac{t^{\alpha}}{2(4c)^{\alpha}}} + 4
\exp(-\frac{t^2}{32(\lceil n/(2m)\rceil\sigma^2+Mt/6)}).
\end{displaymath}

\noindent (ii)Assume additionally that the variables $\xi_i$ are
independent and let $N$ be a stopping time (wrt some filtration
$\mathcal{F}_i \supseteq \sigma(\xi_0,\ldots,\xi_{i-1})$ such that
$\xi_i$ is independent of $\mathcal{F}_i$) such that $N \le n$. Let  $M=c(3\alpha^{-2}\log n)^{1/\alpha}$. Then for any $\varepsilon \in (0,1)$, $a >0$
and $p,q > 0$ such that $p^{-1}+q^{-1}= 1$, we have for all $t \ge
0$,
\begin{displaymath}
\P(|\sum^{N}_{i=1} \xi_{i-1} |>t)\ls
e^{8}e^{-\frac{(p^{-1}t)^{\alpha}}{2c^{\alpha}}} +
2^{1+\varepsilon/(1+\varepsilon)}\exp\Big(-\frac{q^{-2}t^2}{2((1+\varepsilon)a\sigma^2+\mu^{-1}
tq^{-1})}\Big),
\end{displaymath}
where
\begin{displaymath}
\mu = \min\Big(\frac{3}{4M(1+\varepsilon)},
\frac{\sqrt{\varepsilon}}{(1+\varepsilon)\sigma\sqrt{\|(N-a)_+\|_{\psi_1}}}\Big).
\end{displaymath}

\end{prop}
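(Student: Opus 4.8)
The plan is to prove both parts by a single truncation scheme. Fix the prescribed level $M$ and write each variable as $\xi_i = \bar\xi_i + \bar Y_i$, where $\bar\xi_i = \xi_i1_{|\xi_i|\le M} - \E[\xi_i1_{|\xi_i|\le M}]$ is mean zero, bounded by $2M$ and of variance $\le \sigma^2$, while $\bar Y_i = Y_i - \E Y_i$ with $Y_i = \xi_i1_{|\xi_i|>M}$; since $\E\xi_i=0$ the two centering constants coincide, so both summands are centered and $\bar\xi_i+\bar Y_i=\xi_i$. I would then handle the bounded part by Bernstein-type Laplace-transform estimates and the unbounded part by Lemma \ref{unbounded_part}, merging the two contributions through a union bound after splitting the threshold $t$.

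For part (i) the extra difficulty is one-dependence, which I would remove by passing to the even- and odd-indexed subsequences, each an independent family with at most $\lceil n/(2m)\rceil$ terms among the indices $<n/m$. Starting from $\P(\sup_{k<n/m}|\sum_{i=1}^k\xi_{i-1}|>t)\le \P(\sup_k|\sum_{i=1}^k\bar\xi_{i-1}|>t/2)+\P(\sup_k|\sum_{i=1}^k\bar Y_{i-1}|>t/2)$, I would bound the unbounded part by noting $\sup_k|\sum_{i=1}^k\bar Y_{i-1}|\le S_{\mathrm{ev}}+S_{\mathrm{od}}$, where $S_{\mathrm{ev}},S_{\mathrm{od}}$ are the total sums of $|Y_i|+\E|Y_i|$ over even resp. odd indices; splitting $t/2$ into $t/4+t/4$, using $S^\alpha\le\sum(|Y_i|+\E|Y_i|)^\alpha$ for $\alpha\le1$, and applying Lemma \ref{unbounded_part} to each subsequence with $\lambda^\alpha=1/(2c^\alpha)$ (i.e. $\lambda=1/(2^{1/\alpha}c)$, which is legitimate since a larger $M$ only shrinks the $Y_i$) together with Chernoff's bound gives $2e^{8}\exp(-t^\alpha/(2(4c)^\alpha))$, exactly the first term. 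For the bounded part I would split even/odd again, view each partial-sum sequence as a submartingale, and apply Doob's maximal inequality to $\exp(\lambda\sum\bar\xi_{i-1})$ followed by the classical Bernstein bound on the Laplace transform (variance $\le\sigma^2$, bound $2M$, at most $\lceil n/(2m)\rceil$ terms); optimizing in $\lambda$ at threshold $t/4$ reproduces $4\exp(-t^2/(32(\lceil n/(2m)\rceil\sigma^2+Mt/6)))$.

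For part (ii) I would split the threshold by H\"older, $t=p^{-1}t+q^{-1}t$, and write $\P(|\sum_{i=1}^N\xi_{i-1}|>t)\le \P(\sum_{i=0}^{n-1}(|Y_i|+\E|Y_i|)>p^{-1}t)+\P(|\sum_{i=1}^N\bar\xi_{i-1}|>q^{-1}t)$, using $N\le n$ to dominate the unbounded part by the full sum. The first probability is controlled directly by Lemma \ref{unbounded_part} (now with genuine independence, so neither even/odd splitting nor an extra factor $2$ is needed) and Chernoff's inequality, giving $e^{8}\exp(-(p^{-1}t)^\alpha/(2c^\alpha))$. For the second probability I would invoke Lemma \ref{Laplace_bounded} applied to the bounded variables $\bar\xi_i$ (whose bound is $2M$), so that condition (\ref{mocny}) reads exactly $\lambda\le\mu$; the lemma then yields $\E\exp(|\lambda\sum_{i=1}^N\bar\xi_{i-1}|)\le 2^{1+\varepsilon/(1+\varepsilon)}\exp(\lambda^2 a(1+\varepsilon)\sigma^2/(2(1-\lambda(1+\varepsilon)2M/3)))$ for every $\lambda\le\mu$.

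The step I expect to be the crux is turning this Laplace bound into a tail estimate with the constant $\mu^{-1}$ rather than with $b=2M(1+\varepsilon)/3$. Here I would use that $1/\mu\ge \tfrac{4M(1+\varepsilon)}{3}\ge \tfrac{2M(1+\varepsilon)}{3}=b$, so replacing the factor $1-\lambda b$ by the smaller $1-\lambda/\mu$ only weakens the bound, giving $\E\exp(|\lambda\sum_{i=1}^N\bar\xi_{i-1}|)\le 2^{1+\varepsilon/(1+\varepsilon)}\exp(\tfrac{\lambda^2 a(1+\varepsilon)\sigma^2}{2(1-\lambda/\mu)})$ for $0\le\lambda<\mu$. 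This is now the standard self-improving Bernstein form whose natural validity range $\lambda<\mu$ coincides with the imposed constraint, so the unconstrained optimizer $\lambda^\ast=q^{-1}t/((1+\varepsilon)a\sigma^2+\mu^{-1}q^{-1}t)$ automatically satisfies $\lambda^\ast<\mu$ and, via Markov's inequality applied to $\exp(|\lambda\sum\bar\xi_{i-1}|)$, produces precisely $2^{1+\varepsilon/(1+\varepsilon)}\exp(-q^{-2}t^2/(2((1+\varepsilon)a\sigma^2+\mu^{-1}tq^{-1})))$. What remains is routine bookkeeping of constants, including the verification that the prescribed $M$ dominates the truncation level demanded by Lemma \ref{unbounded_part} on the shorter (even/odd) subsequences, which I would record at the end.
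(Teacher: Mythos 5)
Your proposal is correct and takes essentially the same route as the paper's own proof: truncation at level $M$, Lemma \ref{unbounded_part} plus a Chernoff bound at $\lambda=2^{-1/\alpha}c^{-1}$ for the unbounded part, even/odd splitting with Doob's maximal inequality and Bernstein's Laplace bound for part (i), and for part (ii) the split $t=p^{-1}t+q^{-1}t$ followed by Lemma \ref{Laplace_bounded} applied to the $2M$-bounded truncated variables, with the denominator $1-\lambda(1+\varepsilon)2M/3$ weakened to $1-\lambda\mu^{-1}$ and the choice $\lambda=q^{-1}t/((1+\varepsilon)a\sigma^2+\mu^{-1}q^{-1}t)$, exactly as in the paper. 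The only (immaterial) deviations are the order in which you perform the parity and truncation splits, and your explicit verification that the larger truncation level remains admissible in Lemma \ref{unbounded_part} on the shorter even/odd subsequences, a point the paper leaves implicit.
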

\begin{proof}
First observe that the case of one-dependent random variables can be easily transformed to the question of independent ones. Indeed it suffices to split the sum into odd and even part, namely we use
\be\label{radon1}
\P(\sup_{k<n/m}|\sum^k_{i=1} \xi_{i-1}|>t)\ls
\P(\sup_{k<n/m}|\sum^{\infty}_{i=0}\xi_{2i}1_{2i<k}|>t/2)+\P(\sup_{k<n/m}|\sum^{\infty}_{i=0}\xi_{2i+1}1_{2i+1<k}|>t/2).
\ee
Then decompose with respect to $M$, i.e. let $\xi_i=\xi_i 1_{|\xi_i|>M}+\xi_i1_{|\xi_i|\ls M}$ and denote
$$
Y_i=\xi_i1_{|\xi_i|>M}-\E\xi_{i}1_{|\xi_i|>M},\;\; Z_i=\xi_i
1_{|\xi_i|\le M}-\E \xi_i1_{|\xi_i|\le M}.
$$
It results in the following inequality
\be\label{nier0}
\P(\sup_{k<n/m}|\sum^{\infty}_{i=0}\xi_{2i}1_{2i<k}|>t/2) \ls \P(\sup_{k<n/m}|\sum^{\infty}_{i=0}Y_{2i}1_{2i<k}|>t/4)+\P(\sup_{k<n/m}
|\sum^{\infty}_{i=0}Z_{2i}1_{2i < k}|>t/4).
\ee
We use
the usual Laplace transform argument on each of the summands. We
have by Markov's and triangle inequalities (recall that $\alpha
\in (0,1]$, so $|x+y|^\alpha \le |x|^\alpha+|y|^\alpha$),
\begin{displaymath}
\P(\sup_{k<n/m}|\sum^{\infty}_{i=0}Y_{2i}1_{2i<
k}|>t/4)\ls
e^{-\lambda^{\alpha}t^{\alpha}/4^{\alpha}}
\E\exp(\lambda^{\alpha}\sum^{\infty}_{i=0}|Y_{2i}|^{\alpha}1_{2i<(n-1)/m}).
\end{displaymath}
By Lemma \ref{unbounded_part}, for $\lambda \le 1/(2^{1/\alpha}c)$,
$$
%\E \exp(\lambda^{\alpha}|\sum^{\infty}_{i=0}Y_{2i}1_{2i<
%(n-1)/m}|^{\alpha})\ls
\E\exp(\lambda^{\alpha}\sum^{\infty}_{i=0}|Y_{2i}|^{\alpha}1_{2i<
(n-1)/m}) \le \exp(8),
$$
which gives
\begin{align*}
& e^{-\frac{\lambda^{\alpha}t^{\alpha}}{4^{\alpha}}}\E
\exp(\lambda^{\alpha}|\sum^{\infty}_{i=0}Y_{2i}1_{2i<
(n-1)/m}|^{\alpha})\ls \exp(8)
e^{-\frac{t^{\alpha}}{2(4c)^{\alpha}}}.
\end{align*}
This shows that the unbounded part of the sum is well
concentrated.
\smallskip

\noindent Let us now pass to the bounded part. Similarly as before, we have
for all $\bar{\mu}\gs 0$
$$
\P(\sup_{k<n/m}|\sum^{\infty}_{i=0}Z_{2i}1_{2i <
k}|>t/4)\ls e^{-\bar{\mu} t/4 }\E
\exp(\bar{\mu}|\sum^{\infty}_{i=0}Z_{2i}1_{2i< (n-1)/m}|),
$$
where we have used Doob's maximal inequality \cite{Yor} together with the
fact that the sequence $M_k = \exp(\bar{\mu}|\sum_{i=0}^\infty Z_{2i}1_{2i< k}|)$ is
a submartingale.
\smallskip

\noindent
Since $|Z_0| \le 2M$, we can now use the classical Bernstein bound (again see \cite{Bou})
to get for $0 \le \bar{\mu} < 3/(2M)$,
\begin{displaymath}
\E \exp(\bar{\mu}|\sum^{\infty}_{i=0}Z_{2i}1_{2i< (n-1)/m}|) \le
2\exp(\frac{\bar{\mu}^2\sigma^2 \lceil n/2m\rceil}{2(1-2M\bar{\mu} /3)}),
\end{displaymath}
optimizing in $\bar{\mu}\gs 0$ we deduce that
\begin{displaymath}
\P(\sup_{k<n/m}|\sum^{\infty}_{i=0}Z_{2i}1_{2i <
k}|>t/4)\ls 2\exp(-\frac{t^2}{32(\lceil n/(2m)\rceil\sigma^2 +
Mt/6)}).
\end{displaymath}
Combining this inequality with the previous estimate on the
unbounded part gives
\begin{displaymath}
\P(\sup_{k< n/m}|\sum^{\infty}_{i=0}\xi_{2i}1_{2i\ls k}|>t/2) \le
e^{8}e^{-\frac{t^{\alpha}}{2(4c)^{\alpha}}} + 2
\exp(-\frac{t^2}{32(\lceil n/(2m)\rceil\sigma^2+Mt/6)}).
\end{displaymath}
Using an analogous argument for
$\P(\sup_{k<n/m}|\sum_{i=0}^\infty\xi_{2i+1}1_{2i+1<k}| > t/2)$ and then (\ref{radon1}) we
finally derive the inequality asserted in part (i) of the
proposition.
\smallskip

\noindent
Let us now consider part (ii). We will use the above notation with
obvious modifications, which we will not state explicitly (the
difference stems from the fact that now there is no need to split
the sum into the 'odd' and 'even' parts). Instead of the symmetric split we apply
any $p,q\gs 1$ such that $p^{-1}+q^{-1}=1$ and then
\be\label{nierow0}
\P(|\sum^{N}_{i=1}\xi_{i-1}|>t) \ls \P(|\sum^{N}_{i=1}Y_{i-1}|>p^{-1}t)+\P(
|\sum^{N}_{i=1}Z_{i-1}|>q^{-1}t).
\ee
The unbounded part can be handled in the same way
as for part (i). As for the bounded part,
from Lemma \ref{Laplace_bounded},
for any $a\ge 0$, $\varepsilon \in (0,1)$ and
\begin{displaymath}
\mu \le \mu_{1} = \min\Big(\frac{3}{4M(1+\varepsilon)},
\frac{\sqrt{\varepsilon}}{(1+\varepsilon)\sigma\sqrt{\|(N-a)_+\|_{\psi_1}}}\Big),
\end{displaymath}
we get
\begin{displaymath}
\E\exp(|\mu \sum_{i=1}^{N} Z_{i-1}|) \le
2^{1+\varepsilon/(1+\varepsilon)}\exp\Big(\frac{\mu^2
a(1+\varepsilon)\sigma^2}{2(1 - \mu\mu_{1}^{-1})}\Big).
\end{displaymath}
Thus
$$
e^{-q^{-1}\mu t }\E \exp(\mu|\sum_{i=1}^{N} Z_{i-1}|)\ls
e^{-q^{-1}\mu
t}2^{1+\varepsilon/(1+\varepsilon)}\exp\Big(\frac{a(1+\varepsilon)\mu^2\sigma^2}{2(1-\mu\mu_{1}^{-1})}\Big),
$$
from which we deduce like in Bernstein's inequality (setting $\mu
= q^{-1}t/((1+\varepsilon)a\sigma^2+\mu_{1}^{-1}q^{-1}t)$)
$$
\P(|\sum_{i=1}^{N} Z_{i-1}| > q^{-1}t) \ls
2^{1+\varepsilon/(1+\varepsilon)}\exp(-\frac{q^{-2}t^2}{2((1+\varepsilon)a\sigma^2+\mu_{1}^{-1}
tq^{-1})}),
$$
which ends the proof of part (ii).
\end{proof}
\paragraph{Remark}
We note that modifying the martingale argument in part (ii) of Proposition \ref{thm1}
one can show a similar inequality under the assumptions of part (i).

\medskip

Let us now pass to corresponding results in the case of suprema of
empirical processes. We will consider a countable class $\mathcal{F}$ of
functions $f \colon
\mathcal{X} \to \R$ (countability is important only for measurability
purposes and clearly can be relaxed). Let $F(x) = \sup_{f\in\mathcal{F}} |f(x)|$ be
the envelope of $\mathcal{F}$. Our goal will be to obtain
exponential bounds for $\P(S_\mathcal{F}^\ast \ge (1+\varepsilon)\E S_\mathcal{F} + t)$, where
\begin{displaymath}
S_{\mathcal{F}} = \Big\|\sum_{i=1}^n f(\xi_{i-1})\Big\|_\mathcal{F} := \sup_{f\in \mathcal{F}} \Big|\sum_{i=1}^n f(\xi_{i-1})\Big|
\end{displaymath}
and
\begin{displaymath}
S_{\mathcal{F}}^\ast = \max_{k\le n}\sup_{f\in \mathcal{F}} \Big|\sum_{i=1}^k
f(\xi_{i-1})\Big|
\end{displaymath}
($\xi_i$ are independent random variables).

Our main tool will be the following version of Talagrand's
inequality, obtained by Klein and Rio \cite{KleinRio} in the case
of bounded summands.

\begin{theo}\label{KleinRio}
Let $\xi_0,\ldots,\xi_{n-1}$ be i.i.d. random variables and assume that $\mathcal{F}$ is a countable class of functions with an envelope $F$, such that $\E f(\xi_0) =0$ for all $f \in\mathcal{F}$ and
$|F(\xi_i)| \le M$ a.s. Then for any $\lambda \le 2/(3M)$,
\begin{displaymath}
\E\exp(\lambda S) \le \exp\Big(\lambda \E S + \frac{(2M\E S
+n\sigma^2)\lambda^2}{2-3M\lambda}\Big),
\end{displaymath}
where $\sigma^2 = \sup_{f\in\mathcal{F}} \E f(\xi_0)^2$.
\end{theo}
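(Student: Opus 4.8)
The plan is to prove the estimate by the entropy (Herbst) method, regarding $S=\sup_{f\in\mathcal{F}}\sum_{i=0}^{n-1}f(\xi_i)$ as a function of the independent coordinates $\xi_0,\dots,\xi_{n-1}$ and controlling its log-Laplace transform $F(\lambda)=\log\E e^{\lambda S}$. By rescaling $f\mapsto f/M$ and $\lambda\mapsto M\lambda$ I would first reduce to the case $M=1$: this replaces $\sigma^2$ by $\sigma^2/M^2$, $S$ by $S/M$, and a direct substitution checks that the asserted inequality (together with the range $\lambda\le 2/(3M)$) is invariant, so it suffices to prove $F(\lambda)\le\lambda\E S+\frac{v\lambda^2}{2-3\lambda}$ for $\lambda\le 2/3$, where $v=n\sigma^2+2\E S$. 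The starting point is the identity $\Ent(e^{\lambda S})=\lambda\E[Se^{\lambda S}]-\E e^{\lambda S}\log\E e^{\lambda S}$. Writing $G(\lambda)=F(\lambda)/\lambda$, one has $G(0^+)=\E S$ and $G'(\lambda)=\Ent(e^{\lambda S})/(\lambda^2\E e^{\lambda S})$; hence a self-bounding estimate of the form $\Ent(e^{\lambda S})\le C(\lambda)\,v\,\E e^{\lambda S}$ integrates, via $F(\lambda)=\lambda G(\lambda)=\lambda\E S+\lambda\int_0^\lambda C(s)v\,ds$, into the target once $C$ is chosen so that $\int_0^\lambda C(s)\,ds=\frac{\lambda}{2-3\lambda}$ (which forces $C(\lambda)=2/(2-3\lambda)^2$).

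To bound $\Ent(e^{\lambda S})$ I would invoke subadditivity of entropy (tensorization), $\Ent(e^{\lambda S})\le\sum_{i=0}^{n-1}\E\,\Ent_i(e^{\lambda S})$, where $\Ent_i$ is the entropy in the $i$-th coordinate with the others frozen. For each $i$ introduce the leave-one-out maximum $S_i=\sup_{f\in\mathcal{F}}\sum_{j\ne i}f(\xi_j)$, which is independent of $\xi_i$. Applying the variational (dual) characterization of entropy with the reference variable $e^{\lambda S_i}$ then yields the modified log-Sobolev bound $\Ent_i(e^{\lambda S})\le\E_i[e^{\lambda S}\,\psi(-\lambda(S-S_i))]$, where $\psi(x)=e^x-x-1$ and $\E_i$ denotes integration over $\xi_i$ only.

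The heart of the argument is estimating $\sum_i\E[e^{\lambda S}\psi(-\lambda(S-S_i))]$. Let $\hat f\in\mathcal{F}$ be a measurable maximizer of $S$. Comparing $S$ and $S_i$ through $\hat f$ and through the maximizer of $S_i$ gives the two-sided control $|S-S_i|\le M$ together with $S-S_i\le\hat f(\xi_i)$, so the argument of $\psi$ stays in a bounded range and one may use a bound of the type $\psi(x)\le\frac{x^2}{2(1-x/3)}$, which is the source of the denominator $2-3M\lambda$. Splitting the contribution into quadratic and lower-order parts and using, for each fixed $f\in\mathcal{F}$, the centering $\E f(\xi_i)=0$ and the variance bound $\E f(\xi_i)^2\le\sigma^2$, one arrives at a variance-proxy estimate $\sum_i\E_i[(S-S_i)^2\,\cdot\,]\lesssim(n\sigma^2+2\E S)$, in which the weak variance $n\sigma^2$ and the correction $2\E S$ combine precisely into $v$. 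Collecting terms gives the desired self-bounding inequality $\Ent(e^{\lambda S})\le\frac{2v\lambda^2}{(2-3\lambda)^2}\E e^{\lambda S}$, which I then integrate as in the first paragraph.

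I expect the main obstacle to be exactly this per-coordinate step. First, $S-S_i$ need not have a fixed sign (already for $n=1$ one has $S-S_i=\sup_f f(\xi_0)$, which can be negative), so the non-monotonicity of $x\mapsto\psi(-x)$ must be circumvented by comparing against the maximizer $\hat f$ and using the one-sided and two-sided envelope bounds simultaneously. Second, $\hat f$ is data-dependent, so one cannot center naively; it is precisely the failure of $\E\hat f(\xi_i)$ to vanish that feeds the linear $2M\E S$ correction into $v$, and isolating the \emph{weak} variance $\sigma^2=\sup_f\E f(\xi_0)^2$ (rather than a larger quantity) while matching the exact constant $2-3M\lambda$ requires careful bookkeeping. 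An alternative, if one prefers to cite rather than reprove, is to invoke directly the concentration inequality of Klein and Rio \cite{KleinRio} and translate their parametrization into the form stated here.
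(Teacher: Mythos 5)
The paper does not prove this statement at all: Theorem \ref{KleinRio} is imported verbatim from Klein and Rio \cite{KleinRio} (``Our main tool will be the following version of Talagrand's inequality, obtained by Klein and Rio''), so the fallback you mention in your last sentence --- citing \cite{KleinRio} and translating the parametrization --- is exactly what the paper does. Your sketch of an entropy-method proof points at the right literature (this is indeed the family of arguments of Rio, Klein--Rio and Bousquet), but as a proof it has a genuine gap precisely at the per-coordinate step you flag, and the gap is more than ``careful bookkeeping.''

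Concretely, your first paragraph presupposes a pure Herbst-type self-bounding estimate $\Ent(e^{\lambda S})\le \frac{2v\lambda^2}{(2-3\lambda)^2}\,\E e^{\lambda S}$ with the \emph{constant} $v=n\sigma^2+2\E S$, which would then integrate directly. But the tensorization argument you outline cannot deliver a constant $v$. Since the maximizer $\hat f$ depends on $\xi_i$, one must decouple via the leave-one-out maximizer $f^{(i)}$ (independent of $\xi_i$), for which $\E_i f^{(i)}(\xi_i)^2\le\sigma^2$; the remaining correction terms are controlled through the self-bounding relation $\sum_i (S-S_i)\le \sum_i \hat f(\xi_i)= S$ (say with $M=1$), and they enter the entropy bound \emph{weighted by} $e^{\lambda S}$, i.e.\ as terms of the form $\E[S e^{\lambda S}]$ --- derivative-type quantities --- not as $\E S\cdot \E e^{\lambda S}$. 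Replacing $\E[Se^{\lambda S}]$ by $\E S\,\E e^{\lambda S}$ goes the wrong way, since $S$ and $e^{\lambda S}$ are positively correlated, so $\E[Se^{\lambda S}]\ge \E S\,\E e^{\lambda S}$. Hence what tensorization actually yields is a differential inequality mixing $F'(\lambda)$ and $F(\lambda)$ for $F(\lambda)=\log\E e^{\lambda S}$, and it is only by solving that ODE (the delicate analysis carried out by Bousquet and by Klein and Rio) that the term $2M\E S$ gets added to $n\sigma^2$ with the exact denominator $2-3M\lambda$; the fact that sharp constants of this kind required dedicated papers is an indication that the step you defer is the entire theorem. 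So: right method, correctly identified obstacle, but the central estimate is asserted rather than proved, and the specific self-bounding form you propose to integrate is not what the method produces. To be rigorous you would essentially have to reproduce the Klein--Rio (or Bousquet) argument --- or simply cite it, as the paper does.
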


Similarly as in \cite{EL} (formula (3.2)) by using the fact that
$\exp(\sup_f|\sum_{i=1}^k f(\xi_{i-1})|)$ is a submartingale, Doob's
maximal inequality and Bernstein's approach we obtain the following

\begin{coro} \label{KleinRio_cor}In the setting of Theorem \ref{KleinRio}, for any $t\ge 0$,
\begin{displaymath}
\P(S_\mathcal{F}^\ast \ge \E S_\mathcal{F} +t) \le \exp\Big(-\frac{t^2}{2\sigma^2n + (4\E S_\mathcal{F} +
3t)M}\Big)
\end{displaymath}
and in consequence for any $\varepsilon > 0$,
\begin{displaymath}
\P(S_\mathcal{F}^\ast \ge (1+\varepsilon)\E S_\mathcal{F} +t) \le
\exp\Big(-\frac{t^2}{2(1+\varepsilon)n\sigma^2}\Big) +
\exp\Big(-\frac{t}{   MD_\varepsilon}\Big),
\end{displaymath}
where $D_\varepsilon = (1+\varepsilon^{-1})(3+4\varepsilon^{-1})$.
\end{coro}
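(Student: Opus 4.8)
The plan is to combine the exponential moment bound of Theorem \ref{KleinRio} with a maximal inequality and a Bernstein-type optimization to get the first estimate, and then to pass from $\E S_\mathcal{F}$ to $(1+\varepsilon)\E S_\mathcal{F}$ by a separate, elementary but delicate splitting argument that produces the constant $D_\varepsilon$.

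First I would record the maximal inequality. For each fixed $f\in\mathcal{F}$ the partial sums $\sum_{i=1}^k f(\xi_{i-1})$ form a martingale, so $|\sum_{i=1}^k f(\xi_{i-1})|$ is a submartingale, and taking the (countable) supremum over $f$ preserves this property; applying the convex increasing map $x\mapsto e^{\lambda x}$ ($\lambda\ge 0$) then shows that $k\mapsto \exp(\lambda\sup_{f}|\sum_{i=1}^k f(\xi_{i-1})|)$ is a nonnegative submartingale (this is the content of the reference to \cite{EL}). Doob's maximal inequality gives, for every $s,\lambda\ge 0$, the bound $\P(S_\mathcal{F}^\ast \ge s)\ls e^{-\lambda s}\E\exp(\lambda S_\mathcal{F})$. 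Inserting Theorem \ref{KleinRio}, which after rewriting reads $\log\E\exp(\lambda(S_\mathcal{F}-\E S_\mathcal{F}))\ls v\lambda^2/(2(1-c\lambda))$ with $v=2M\E S_\mathcal{F}+n\sigma^2$ and $c=3M/2$, valid for $\lambda\ls 2/(3M)=1/c$, I would choose $s=\E S_\mathcal{F}+t$ and the Bernstein-optimal $\lambda=t/(v+ct)$ (which automatically satisfies $\lambda\ls 1/c$). This yields $\P(S_\mathcal{F}^\ast\ge\E S_\mathcal{F}+t)\ls\exp(-t^2/(2v+2ct))=\exp(-t^2/(2n\sigma^2+(4\E S_\mathcal{F}+3t)M))$, the first assertion; this part is routine.

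For the second assertion I would apply the first one with $t$ replaced by $\varepsilon\E S_\mathcal{F}+t$ and then lower-bound the resulting exponent $\phi:=(\varepsilon\E S_\mathcal{F}+t)^2/(V+W)$, where $V=2n\sigma^2$ and $W=(4+3\varepsilon)M\E S_\mathcal{F}+3Mt$, by the minimum of the two target exponents. The decisive device is a dichotomy on the size of $W$ relative to $\varepsilon V$, writing $u:=\varepsilon\E S_\mathcal{F}+t\ge t$. In the regime $W\ls\varepsilon V$ one has $V[(1+\varepsilon)u^2-t^2]\ge \varepsilon t^2 V\ge t^2 W$ (using only $u\ge t$), which rearranges to $\phi\ge t^2/(2(1+\varepsilon)n\sigma^2)$. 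In the regime $W>\varepsilon V$ one has $V+W\ls(1+\varepsilon^{-1})W$, and setting $k=1+\varepsilon^{-1}$ (so that conveniently $D_\varepsilon=k(4k-1)$) the bound $\phi\ge t/(MD_\varepsilon)$ reduces, after clearing denominators and dividing by $k$, to $(4k-4)t^2+(4k-1)t(\varepsilon\E S_\mathcal{F})+(4k-1)(\varepsilon\E S_\mathcal{F})^2\ge 0$, which holds termwise since $k>1$. Combining the two regimes gives $\phi\ge\min(t^2/(2(1+\varepsilon)n\sigma^2),\,t/(MD_\varepsilon))$, and since $e^{-\min(A,B)}=\max(e^{-A},e^{-B})\ls e^{-A}+e^{-B}$ the claimed inequality follows.

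I expect the main obstacle to be precisely this last splitting. Recovering the clean subgaussian coefficient $2(1+\varepsilon)n\sigma^2$, rather than the cruder $4n\sigma^2$ one would get from a naive $1/(V+W)\ge 1/(2\max(V,W))$ estimate, forces the threshold to be placed exactly at $W=\varepsilon V$ and requires genuinely exploiting the excess $\varepsilon\E S_\mathcal{F}$ sitting in the numerator $u$; at the same time the complementary regime must output precisely the constant $D_\varepsilon$. Balancing these two requirements simultaneously is what pins down the stated form $D_\varepsilon=(1+\varepsilon^{-1})(3+4\varepsilon^{-1})$, and keeping track of the algebra there is the only nontrivial step.
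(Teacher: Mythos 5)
Your proof is correct and follows essentially the same route the paper sketches: the submartingale property of $\exp(\lambda\sup_{f}|\sum_{i=1}^k f(\xi_{i-1})|)$, Doob's maximal inequality, and Bernstein-type optimization of the Laplace bound from Theorem \ref{KleinRio}, which indeed yields the denominator $2n\sigma^2+(4\E S_{\mathcal{F}}+3t)M$ at $\lambda=t/(v+ct)$ with $v=2M\E S_{\mathcal{F}}+n\sigma^2$, $c=3M/2$. Your dichotomy at $W=\varepsilon V$ correctly supplies the ``in consequence'' step the paper leaves implicit, and your algebra (reducing to $(4k-4)t^2+(4k-1)t\varepsilon\E S_{\mathcal{F}}+(4k-1)(\varepsilon\E S_{\mathcal{F}})^2\ge 0$ with $k=1+\varepsilon^{-1}$) pins down exactly the stated constant $D_\varepsilon=(1+\varepsilon^{-1})(3+4\varepsilon^{-1})$.
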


\paragraph{Remark}
Two important aspects of the above inequalities are that the subgaussian behavior of the tail estimate for small $t$ is governed by the weak-variance $\sigma^2$ and that the parameter $\varepsilon$ can be taken arbitrarily small, which shows in particular that for Donsker classes $\mathcal{F}$ the concentration properties of the empirical process are almost as good as for the limiting Gaussian
process.

We remark that there are other recent inequalities for suprema of empirical processes, which take into account some other parameters of the process than we consider here and also assume some additional knowledge on the so called bracketing numbers of the class $\mathcal{F}$ (see e.g. \cite{LedvdGeer}, where the weak variance is replaced by a Bernstein type upper bound for moments of individual functions $f$). In concrete application their advantage is that they provide also an upper bound on $\E S_\mathcal{F}$, however at the same time they are not as general as Talagrand's inequality. Our `philosophy' here is that we would like to separate two aspects of the study of empirical processes, i.e. bounds on the expectation (which are intimately related to concentration properties for individual functions $f$ and are usually obtained via chaining methods) and concentration for the supremum (which we would like to express, at least at the subgaussian level, via the weak variance of the process). This is also the reason why we prefer to introduce the additional parameter $\varepsilon$ into the inequalities, since then the right estimate on probability does not involve the expectation of the supremum any more and in particular the growth with $n$ of the exponents becomes explicit (note that while the expectation is always of the order at most $\mathcal{O}(n)$, in typical situations, i.e. under some additional geometric assumptions it is smaller). To the best of our knowledge eliminating the factor $(1+\varepsilon)$ in front of $\E S_\mathcal{F}$, without introducing additional parameters (i.e. other than $\sigma^2$ and $M$) to the probability bound remains an open problem.

\paragraph{} Our next goal is to give tools, which will allow to prove similar inequalities for suprema of additive functionals of Markov chains. Combining Corollary \ref{KleinRio_cor} with Lemma \ref{unbounded_part} we
obtain
\begin{prop}\label{unbounded_emp_ind} Consider i.i.d. random variables $\xi_0,\ldots,\xi_{n-1}$ and assume that $\mathcal{F}$ is a countable class of functions with an envelope $F$, such that $\E f(\xi_0) =0$ for all $f \in\mathcal{F}$ and
$\E \exp(c^{-\alpha}F(\xi_i)^\alpha) \le 2$.  Let  $M = c(3\alpha^{-2}\log n)^{1/\alpha}$ and $\sigma^2 = \sup_{f\in\mathcal{F}} \E f(\xi_0)^2$. Then for
any $\varepsilon \in (0,1/2)$ and
$t\ge 0$,
\begin{align*}
\P(S_\mathcal{F}^\ast \ge (1+\varepsilon)\E S_\mathcal{F} +t) \le& \exp\Big(-\frac{(1-2\varepsilon)^2 t^2}{2(1+\varepsilon)n\sigma^2}\Big) +
e\exp\Big(-\frac{t(1-2\varepsilon)}{2M(1+\varepsilon^{-1})(3+4\varepsilon^{-1})}\Big) \\
&+ e^8\exp\Big({-\frac{(\varepsilon t)^\alpha}{2c^\alpha}}\Big).
\end{align*}
\end{prop}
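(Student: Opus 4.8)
The plan is to truncate each summand at the level of the envelope, treat the resulting bounded empirical process with Corollary~\ref{KleinRio_cor} and the discarded tail with Lemma~\ref{unbounded_part}, splitting the deviation level $t$ so that one $\varepsilon$-fraction is spent on the tail and a second $\varepsilon$-fraction provides the slack needed to pass from the mean of the truncated supremum to $\E S_\mathcal{F}$; this is what produces the factor $1-2\varepsilon$.

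First I would fix $M=c(3\alpha^{-2}\log n)^{1/\alpha}$ and set $Y_i=F(\xi_i)1_{F(\xi_i)>M}$. For $f\in\mathcal{F}$ put $a_f=\E[f(\xi_0)1_{F(\xi_0)\le M}]$; since $\E f(\xi_0)=0$ we have $a_f=-\E[f(\xi_0)1_{F(\xi_0)>M}]$, hence $|a_f|\le\E Y_0$. Writing $\tilde f=f1_{F\le M}-a_f$, the class $\tilde{\mathcal{F}}=\{\tilde f:f\in\mathcal{F}\}$ consists of centered functions whose envelope is bounded by $2M$ and whose weak variance satisfies $\sup_{f}\E\tilde f(\xi_0)^2\le\sup_f\E f(\xi_0)^2=\sigma^2$. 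Splitting $\sum_{i=1}^k f(\xi_{i-1})=\sum_{i=1}^k\tilde f(\xi_{i-1})+\sum_{i=1}^k(f(\xi_{i-1})1_{F>M}+a_f)$ and taking suprema over $f$ and the maximum over $k\le n$ gives $S_\mathcal{F}^\ast\le S_{\tilde{\mathcal{F}}}^\ast+R$, where $R=\max_{k\le n}\sup_f|\sum_{i=1}^k(f(\xi_{i-1})1_{F>M}+a_f)|\le\sum_{i=0}^{n-1}(Y_i+\E Y_i)$ (using $|a_f|\le\E Y_0$).

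For the tail I would apply Lemma~\ref{unbounded_part}: using $\alpha\le1$ and the subadditivity $(\sum_i x_i)^\alpha\le\sum_i x_i^\alpha$ together with Markov's inequality at $\lambda^\alpha=1/(2c^\alpha)$, one gets $\P(R\ge\varepsilon t)\le e^8\exp(-(\varepsilon t)^\alpha/(2c^\alpha))$, the third term of the asserted bound. For the bounded process I would apply Corollary~\ref{KleinRio_cor} to $\tilde{\mathcal{F}}$ with envelope bound $2M$ and variance $\sigma^2$, which accounts for the denominators $2(1+\varepsilon)n\sigma^2$ and $2M(1+\varepsilon^{-1})(3+4\varepsilon^{-1})$. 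The one point requiring care is that this corollary is phrased in terms of $\E S_{\tilde{\mathcal{F}}}$, whereas the statement involves $\E S_\mathcal{F}$; the same decomposition at $k=n$ yields $\E S_{\tilde{\mathcal{F}}}\le\E S_\mathcal{F}+\Delta$ with $\Delta=2n\E Y_0$, and the tail estimate $\E Y_0\le 4M e^{-M^\alpha/c^\alpha}\le 4Mn^{-3}$ that underlies Lemma~\ref{unbounded_part} shows $\Delta\le 8Mn^{-2}$ is negligible (in particular $(1+\varepsilon)\Delta/(2MD_\varepsilon)\le1$, where $D_\varepsilon=(1+\varepsilon^{-1})(3+4\varepsilon^{-1})$).

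To combine, I would work on the event $\{R<\varepsilon t\}$, on which $S_\mathcal{F}^\ast\ge(1+\varepsilon)\E S_\mathcal{F}+t$ forces $S_{\tilde{\mathcal{F}}}^\ast\ge(1+\varepsilon)\E S_{\tilde{\mathcal{F}}}+s$ with $s\ge(1-\varepsilon)t-(1+\varepsilon)\Delta$. For the sub-Gaussian summand I would use $s\ge(1-2\varepsilon)t$, valid as soon as $\varepsilon t\ge(1+\varepsilon)\Delta$; for the remaining tiny range of $t$ the right-hand side already exceeds $e^8\ge1$, so the inequality is trivial. For the linear summand I would instead factor out $\exp((1+\varepsilon)\Delta/(2MD_\varepsilon))\le e$---this is precisely the source of the constant $e$---and relax $(1-\varepsilon)$ to $(1-2\varepsilon)$. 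Adding the contribution of $\{R\ge\varepsilon t\}$ then produces the three claimed terms. The main obstacle is exactly this bookkeeping: comparing $\E S_{\tilde{\mathcal{F}}}$ with $\E S_\mathcal{F}$ and allocating the budget $t$ between the two $\varepsilon$-fractions while checking that the truncation bias $\Delta$ is uniformly harmless, including the degenerate small-$t$ regime.
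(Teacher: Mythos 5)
Your proposal is correct and follows essentially the same route as the paper's proof: the same truncation of $\mathcal{F}$ at level $M$, Lemma \ref{unbounded_part} plus Markov's inequality for the unbounded remainder, Corollary \ref{KleinRio_cor} (with envelope $2M$ and variance $\le\sigma^2$) for the truncated class, and the same comparison $\E S_{\tilde{\mathcal{F}}}\le \E S_{\mathcal{F}}+2n\E Y_0$ controlled via Lemma \ref{lem1} (the paper handles the bias slightly differently, reducing w.l.o.g.\ to $t\ge 16M\varepsilon^{-1}$ and absorbing the whole bias $16M\le\varepsilon t$ into the deviation for both terms, which is where its prefactor $e$ is actually spent). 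One small repair to your degenerate-range argument: when $\varepsilon t<(1+\varepsilon)\Delta$ the triviality should be drawn from the \emph{second} term rather than the $e^8$ term --- for small $\alpha$ the bound $\Delta/\varepsilon\lesssim M n^{-2}\varepsilon^{-1}$ can far exceed $16^{1/\alpha}c$, so $e^8\exp(-(\varepsilon t)^\alpha/(2c^\alpha))$ need not exceed $1$ there, whereas $D_\varepsilon\ge 4\varepsilon^{-2}$ gives $t(1-2\varepsilon)<2MD_\varepsilon$ in that range, making the second summand exceed $e\cdot e^{-1}=1$ and the inequality trivial as you claim.
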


\begin{proof}  For $f \in \mathcal{F}$, define
the functions $f_1(x) = f(x)1_{|F(x)| \le M} - \E f(\xi_0)1_{|F(\xi_0)| \le M}$, $f_2(x) = f(x) -
f_1(x) = f(x)1_{|F(x)|
> M} - \E f(\xi_0)1_{|F(\xi_0)|
> M}$. Let $\mathcal{F}_i = \{f_i\colon f\in \mathcal{F}\}$, $i =
1,2$. Clearly $S_\mathcal{F}^\ast \le S_{\mathcal{F}_1}^\ast + S_{\mathcal{F}_2}^\ast$, and thus

\begin{align*}
\P(S_\mathcal{F}^\ast \ge (1+\varepsilon)\E S_\mathcal{F} + t) \le \P(S_{\mathcal{F}_1}^\ast \ge(1+\varepsilon)\E S_\mathcal{F} + (1-\varepsilon)t) +
\P(S_{\mathcal{F}_2}^\ast \ge \varepsilon t).
\end{align*}

We have
\begin{displaymath}
S_{\mathcal{F}_2}^\ast \le \sum_{i=1}^n\Big(F(\xi_{i-1})1_{|F(\xi_{i-1})|
> M} + \E F(\xi_{i-1})1_{|F(\xi_{i-1})|> M}\Big)
\end{displaymath}
and so by Lemma \ref{unbounded_part} and Chebyshev's inequality we get
\begin{align}
\P(S_{\mathcal{F}_2}^\ast \ge\varepsilon t) \le e^8e^{-\frac{(\varepsilon t)^\alpha}{2c^\alpha}}.
\end{align}

Let us note that without loss of generality we can assume that $t \ge 16M\varepsilon^{-1}$ (otherwise the right hand side of the inequality in question exceeds one). Therefore

\begin{align*}
(1+\varepsilon) \E S_{\mathcal{F}} &\ge (1+\varepsilon) \E S_{\mathcal{F}_1} - 2\E S_{\mathcal{F}_2}
\ge (1+\varepsilon) \E S_{\mathcal{F}_1} - 4n\E F(\xi_0) 1_{F(\xi_0) > M}\\
&\ge (1+\varepsilon) \E S_{\mathcal{F}_1} - 16Mn\exp(-M^\alpha/c^\alpha) \ge (1+\varepsilon) \E S_{\mathcal{F}_1} - 16M \\
&\ge (1+\varepsilon) \E S_{\mathcal{F}_1} - \varepsilon t,
\end{align*}
where the third inequality follows from Lemma \ref{lem1}.

Taking into account that for every $f\in \mathcal{F}$, $\|f_1\|_\infty \le 2M$ and $\E f_1(\xi_0)^2 \le \E f(\xi_0)^2 $, we get by Corollary \ref{KleinRio_cor},
\begin{align*}
\P(S_{\mathcal{F}_1}^\ast > (1+\varepsilon)\E S_\mathcal{F} + (1-\varepsilon)t) &\le
\P(S_{\mathcal{F}_1}^\ast > (1+\varepsilon)\E S_{\mathcal{F}_1} + (1-2\varepsilon)t)\\
&\le \exp\Big(-\frac{(1-2\varepsilon)^2 t^2}{2(1+\varepsilon)n\sigma^2}\Big) +
\exp\Big(-\frac{t(1-2\varepsilon)}{2MD_\varepsilon}\Big),
\end{align*}
which ends the proof of the proposition.
\end{proof}

\section{Exponential concentration for additive functionals of Markov chains}\label{sect3}

In this section we will prove tail estimates for Markov chains
expressed in terms of quantities introduced in Section
\ref{sect1}.

\subsection{Additive functionals}
We will start from the most general of our inequalities and later we
will add assumptions under which we are able to improve some
aspects of the estimates.

The inequalities we present are expressed in terms of the parameters $\cca,\ccb,\ccc$ introduced in Section \ref{sect1}, formula (\ref{definition_abc}). Our discussion in Section \ref{sect1} shows that there exist bounds on  $\cca,\ccb,\ccc$ in terms of $\ccA,\ccB,\ccC,\ccD$ (Proposition \ref{pros1}), which in turn can be
estimated via drift conditions (Theorems \ref{thmcond1},
\ref{thmcond2}). We do not plug the most general version of those inequalities into our tail estimates so as not to obscure the already
quite involved formulas. At the end of the section we will do this only in the case of geometrically ergodic Markov chains to obtain Theorem \ref{thm:A_new} from the Introduction with explicit formulas for the parameters involved.

Recall that $\pi^{\ast}$ is the split of $\pi$ measure.

\bt\label{thm2} Let $\{\bf X\}$ be an ergodic Markov chain and
$\{\bf X^m\}$ its $m$-skeleton used in the split chain
construction. Assume for simplicity that $m|n$ and let $f\colon \mathcal{X} \to \R$ be an arbitrary function for which the parameters $\cca,\ccb,\ccc$ defined in \eqref{definition_abc} are finite and such that $\E_\pi f = 0$. Set $M=\ccc(3\alpha^{-2}\log(n/m))^{1/\alpha}$. Then the following
inequality holds for all $t \ge 0$,
\begin{align*}
 \P_x(|\sum^{n-1}_{i=0}f(X_i)|>3t)\ls& 2\exp\Big(-\frac{t^{\alpha}}{\cca^{\alpha}}\Big)+ 2\pi^{\ast}(\theta)^{-1}\exp\Big(-\frac{t^{\alpha}}{\ccb^{\alpha}}\Big)+
2e^{8}\exp\Big(-\frac{t^{\alpha}}{2(4\ccc)^{\alpha}}\Big) \\
&+ 4
\exp\Big(-\frac{t^2}{32(\lceil n/(2m)\rceil\sigma^2+Mt/6)}\Big).
\end{align*}
where $\cca,\ccb,\ccc$ have been defined by formula
(\ref{definition_abc}) and $\sigma^2 = \E s_0^2$.
 \et

\begin{proof}
We have introduced all necessary tools to prove the exponential concentration. By the construction of the split chain,
$$
\P_x(|\sum^{n-1}_{i=0}f(X_i)|>3t)=\bar{\P}_{x^{\ast}}(|\sum^{n-1}_{i=1}f(X_i)|>3t).
$$
In (\ref{decomp}) we have decomposed $\sum^{n-1}_{i=0}f(X_i)$ into three summands
$$
|\sum^{n-1}_{k=0}f(X_k)|\le U_n(f)+V_n(f)+W_n(f),
$$
therefore to complete our proof it suffices to bound
$$
\bar{\P}_{\ast}(|U_n|>t),\;\;\bar{\P}_{\ast}(|V_n|>t),\;\;\bar{\P}_{\ast}(|W_n|>t).
$$

\noindent
For simplicity we have assumed that $m|n$, then all the quantities can be expressed in terms of
$Z_k(f)$, $k\gs 0$, namely
$$
U_n(f)=\Big|\sum^{\min(\sigma,n/m-1)}_{k=0}Z_k(f)\Big|,\;\;V_n(f)=\sum^{N}_{i=1}s_{i-1},\;\;W_n(f)=\Big|1_{N > 0}\sum^{\sigma(N)}_{k=n/m}Z_k(f)\Big|.
$$
To bound the first term note that \be\label{tg1}
\bar{\P}_{x^{\ast}}(|U_n(f)|>t)\ls
2\exp(-\frac{t^{\alpha}}{\cca^{\alpha}}), \ee where
$\cca=\|\sum^{\sigma}_{k=0}|Z_k(f)|\|_{\psi_{\alpha},\bar{\P}_{x^\ast}}$.
Estimating $W_n(f)$ is more involved. By Pitman's occupation measure formula (\cite{Pit1,Pit2}, see also \cite[Theorem 10.0.1]{MeynTwe}) we get
$$
\sum^{\infty}_{l=1}\bar{\E}_{\theta}\exp(\ccb^{-\alpha}|\sum^{\tau_{\alpha}}_{k=l} Z_k(f)|^{\alpha})1_{\tau_\alpha \ge l}=\pi^{\ast}(\theta)^{-1}\E_{\pi^{\ast}}
 \exp(\ccb^{-\alpha}|\sum^{\sigma(0)}_{k=0} Z_k(f)|^{\alpha}),
$$
and it follows that
\begin{align*}
&\bar{\P}_{x^{\ast}}(|W_n(f)|>t)\ls \sum^{n/m-1}_{l=1}\bar{\P}_{x^{\ast}}(\bar{X}^m_{n/m-l}\in \theta)\bar{\P}_{\theta}(|\sum^{\tau_{\alpha}}_{k=l}Z_k(f)|>t\;\&\;\tau_{\alpha}\gs l)\ls  \\
&\ls \sum^{n/m-1}_{l=1}\bar{\P}_{x^{\ast}}(\bar{X}^m_{n/m-l}\in \theta)e^{-\frac{t^{\alpha}}{\ccb^{\alpha}}}
\bar{\E}_{\theta}(1_{\tau_{\alpha}\gs l}\exp(\ccb^{-\alpha}|\sum^{\tau_{\alpha}}_{k=l}Z_k(f)|^{\alpha})\ls\\
&\ls [\max_{1\ls l <n/m}\bar{\P}_{x^{\ast}}(\bar{X}^m_{n/m-l}\in \theta)]e^{-\frac{t^{\alpha}}{\ccb^{\alpha}}}\pi^{\ast}(\theta)^{-1}\E_{\pi^{\ast}}
 \exp(\ccb^{-\alpha}(\sum^{\sigma(0)}_{k=0}|Z_k(f)|)^{\alpha}).
\end{align*}
Obviously
$$
[\max_{1\ls l <n/m}\bar{\P}_{x^{\ast}}(\bar{X}^m_{n/m-l}\in \theta)]\ls 1,
$$
but we stress here that $\lim_{n\ra\infty}\bar{\P}_{x^{\ast}}(\bar{X}^m_{n/m-l}\in \theta)=\pi^{\ast}(\theta)$, so the bound
usually can be much better if we wait for ergodicity to be observed. However in the general case it shows that
$$
\bar{\P}_{x^{\ast}}(W_n(f)>t)\ls e^{-\frac{t^{\alpha}}{\ccb^{\alpha}}}[\pi^{\ast}(\theta)^{-1}\E_{\pi^{\ast}}
 \exp(\ccb^{-\alpha}(\sum^{\sigma(0)}_{k=0}|Z_k(f)|)^{\alpha}) ],
$$
so due to the definition of $\ccb$ it yields \be\label{tg2}
\bar{\P}_{x^{\ast}}(W_n(f)>t)\ls
2\pi^{\ast}(\theta)^{-1}e^{-\frac{t^{\alpha}}{\ccb^{\alpha}}}. \ee

Finally we should give a bound on $\bar{\P}_{x^{\ast}}(V_n>t)$, yet this
is exactly the setting of our main tool. If we set $\xi_i=s_i$ and
$$
c=\|s_i(f)\|_{\psi_{\alpha},\bar{\P}}=\ccc,
$$
then Proposition \ref{thm1} gives
$$
\bar{\P}_{x^{\ast}}(|V_n |>t)\ls 2e^{8}e^{-\frac{t^{\alpha}}{2(4c)^{\alpha}}}
+ 4
\exp(-\frac{t^2}{32(\lceil n/(2m)\rceil\sigma^2+Mt/6)}),
$$
where $\sigma^2=\bar{\E} s_1^2$.
\end{proof}

One of important features of the classical Bernstein inequality is
that for small $t$ (i.e. $t \ll n$) it provides a subgaussian tail
estimate with the subgaussian coefficients given by the variance
of the i.i.d. summands. Thus it shows that for some range of $t$
the tail estimate is almost as good as for the limiting Gaussian
variable.

Note that for $m>1$ the subgaussian coefficients from Theorem
\ref{thm2} differs from the asymptotic variance (i.e. the variance of the limiting Gaussian distribution), which as is well
known (see eg. \cite{Che,MeynTwe}) is equal to $\pi^\ast(\theta)(\E s_0(f)^2 + 2\E
s_0(f)s_1(f))$. Moreover also in the case $m=1$, the subgaussian
coefficients remains bounded away from the asymptotic variance, which in this case is equal to $\pi^\ast(\theta)\E s_0(f)^2$.

We would now like to provide an estimate for additive functionals
with the subgaussian coefficients arbitrarily close to
$\pi^\ast(\theta)\E s_0(f)^2$. To achieve this we will have to make additional
assumptions concerning the Markov chain. Namely, we will assume
that it is geometrically ergodic, i.e. that
$\|\sigma(0)+1\|_{\psi_1,\P_{\nu^\ast}} = \|\sigma(1)-\sigma(0)\|_{\psi_1} < \infty$ (recall that $\nu$ is the
minorizing measure used in the splitting construction) and that
$m=1$ (i.e. the chain is strongly aperiodic).

Our proof will rely on the second part of Proposition \ref{thm1}.
In order to use it we will first show that the stopping time $N$
does not deviate much from $n\pi^\ast(\theta)$.
The following result is classical and can be found e.g. in \cite{Bou}.
\begin{lema}\label{le:Bernstein} [Bernstein's $\psi_1$ inequality] If
$\xi_0,\ldots,\xi_{n-1}$ are independent mean zero random
variables such that $\|\xi_i\|_{\psi_1} \le c$, then for all
$t\ge 0$,
\begin{displaymath}
\P(\sum_{i=0}^{n-1}\xi_i\ge t) \le \exp(-\frac{t^2}{4nc^2 +
2ct}).
\end{displaymath}
\end{lema}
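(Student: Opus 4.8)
The plan is to run the classical Chernoff argument, extracting a Bernstein-type bound on the Laplace transform whose constants come directly from the $\psi_1$ hypothesis. First I would convert the assumption $\|\xi_i\|_{\psi_1}\le c$, i.e. $\E\exp(|\xi_i|/c)\le 2$, into a moment bound. Expanding the exponential and subtracting the constant term gives $\sum_{k\ge 1}\E|\xi_i|^k/(c^k k!)\le 1$, so that each individual summand is at most one, whence $\E|\xi_i|^k\le k!\,c^k$ for every $k\ge 1$. It is precisely the fact that the \emph{whole} series of positive terms is bounded by $1$ (and not merely by $2$) that will yield the exact constant $4nc^2$ in the denominator rather than a larger one, so this is the one spot where I would be careful with the bookkeeping.

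Next I would estimate the Laplace transform of a single centered variable. Since $\E\xi_i=0$ the linear term disappears, and for $0\le\lambda<1/c$ the moment bound gives
\begin{displaymath}
\E\exp(\lambda\xi_i)\le 1+\sum_{k\ge 2}\frac{|\lambda|^k\E|\xi_i|^k}{k!}\le 1+\sum_{k\ge 2}(\lambda c)^k=1+\frac{\lambda^2 c^2}{1-\lambda c}\le \exp\Big(\frac{\lambda^2 c^2}{1-\lambda c}\Big),
\end{displaymath}
the last step using $1+x\le e^x$. By independence this multiplies up to $\E\exp(\lambda\sum_{i=0}^{n-1}\xi_i)\le\exp\big(n\lambda^2 c^2/(1-\lambda c)\big)$, which is exactly the Bernstein form $\exp\big((v\lambda^2/2)/(1-b\lambda)\big)$ with $v=2nc^2$ and $b=c$.

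Finally I would perform the standard Bernstein optimization. Markov's inequality gives $\P(\sum_{i=0}^{n-1}\xi_i\ge t)\le\exp\big(-\lambda t+n\lambda^2 c^2/(1-\lambda c)\big)$, and the choice $\lambda=t/(2nc^2+ct)$, which automatically satisfies $\lambda<1/c$, turns the exponent into $-t^2/(2(2nc^2+ct))=-t^2/(4nc^2+2ct)$, as claimed. I do not expect a genuine obstacle here, since the statement is classical; the only delicate point is the constant accounting in the moment estimate described above, while the remainder is a routine Chernoff computation.
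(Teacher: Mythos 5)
Your proof is correct, and the constants check out: with $\lambda = t/(2nc^2+ct)$ one has $1-\lambda c = 2nc^2/(2nc^2+ct)$, so $-\lambda t + n\lambda^2c^2/(1-\lambda c) = -t^2/(2(2nc^2+ct)) = -t^2/(4nc^2+2ct)$, exactly as claimed. Note, however, that the paper does not prove this lemma at all — it states that the result is classical and refers to \cite{Bou} — so there is no in-paper argument to compare against; what you have done is supply the standard self-contained proof (moment bounds from the $\psi_1$ condition, a sub-exponential bound on the Laplace transform, Chernoff optimization), which is precisely the kind of argument the cited reference contains. Two minor bookkeeping remarks. First, passing from $\|\xi_i\|_{\psi_1}\le c$ to $\E\exp(|\xi_i|/c)\le 2$ uses that the defining infimum is attained (monotone convergence in the parameter $c'$); this is standard but worth a word. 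Second, your emphasis on the full series $\sum_{k\ge1}\E|\xi_i|^k/(c^kk!)\le 1$ is slightly more than you use: in the Laplace-transform step you only invoke the termwise bound $\E|\xi_i|^k\le k!\,c^k$ and then resum the geometric series $\sum_{k\ge2}(\lambda c)^k$, so the "whole series bounded by $1$" observation, while true, is not what produces the constant $4nc^2$ — the geometric resummation with $v=2nc^2$, $b=c$ is. Neither point is a gap; the proof is complete as written.
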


Before we formulate the next lemma, let us introduce one more parameter, which quantifies geometric ergodicity
of the chain. Namely, define

\begin{align}\label{d_def}
{\bf d} = \|\sigma(1)-\sigma(0)\|_{\psi_1}  = \|\sigma(0)+1\|_{\psi_1,\P_{\nu^\ast}}< \infty
\end{align}
(note that this definition does not depend on the initial distribution of the chain).

As is well known geometric ergodicity is equivalent to finiteness of ${\bf d}$, which can be effectively bounded by using classical drift conditions (see e.g. \cite{MeynTwe}). More precisely we have the following

\begin{prop}\label{prop:boundond_new}
If the chain satisfies \eqref{small}  with $m=1$ and the drift condition \eqref{eq:geometric_ergodicity_drift_new}, then
\begin{align*}
{\bf d} &\le 2r\max\Big(\frac{\log(b(1-\lambda)^{-1}+K)}{\log 2},1\Big)\frac{1}{\log \frac{1}{1-\lambda}}\\
&\le 2\Big(\frac{\log(\frac{6}{2-\delta})}{\log(\frac{2}{2-\delta})}\Big)\max\Big(\frac{\log(b(1-\lambda)^{-1}+K)}{\log 2},1\Big)\frac{1}{\log \frac{1}{1-\lambda}},
\end{align*}
where $r$ is defined by \eqref{wrona0}.
\end{prop}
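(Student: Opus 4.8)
The plan is to recognise that $\mathbf{d}$ is nothing but the parameter $\ccc$ from \eqref{definition_abc} specialised to the constant function $f\equiv 1$ and the exponent $\alpha=1$. Indeed, since $m=1$ we have $Z_j(1)=1$ for every $j$, so that
$$
s_0(1)=\sum_{j=\sigma(0)+1}^{\sigma(1)}Z_j(1)=\sigma(1)-\sigma(0),
$$
and consequently $\ccc=\|s_0(1)\|_{\psi_1,\bar{\P}}=\|\sigma(1)-\sigma(0)\|_{\psi_1}=\mathbf{d}$ (the distribution of $s_i(1)$ being the same for all $i$, as recorded in the excursion theorem of Section \ref{sect1}). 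This reduces the proposition to estimating $\ccc$ for this particular choice of $f$ and $\alpha$, for which the machinery of Sections \ref{sect1} and \ref{section_drifts} is already available.

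First I would apply Proposition \ref{pros1} with $\alpha=1$, which yields $\ccc\le r(\ccC+\ccD)$. It then remains to control $\ccC$ and $\ccD$ for $f\equiv 1$. Here the crucial simplification is that $\sum_{i=0}^{\tau_C-1}|Z_i(1)|=\tau_C$, so that both $\ccC$ and $\ccD$ are dominated by $\sup_{x\in C}\|\tau_C\|_{\psi_1,\P_x}$ (for $\ccC$ because $|\sum_{i=1}^{\tau_C-1}Z_i(1)|=\tau_C-1\le\tau_C$ pointwise, for $\ccD$ because $|Z_0(1)|=1\le\tau_C$ pointwise, both dominations being preserved by the $\psi_1$ norm). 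This gives
$$
\mathbf{d}=\ccc\le 2r\sup_{x\in C}\|\tau_C\|_{\psi_1,\P_x}.
$$

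To finish, I would invoke the third bound of Lemma \ref{le:auxiliary_regeneration_new}, namely $\sup_{x\in C}\|\tau_C\|_{\psi_1,\P_x}\le\max\big(\log(b(1-\lambda)^{-1}+K)/\log 2,\,1\big)/\log\frac{1}{1-\lambda}$, which together with the previous display produces exactly the first asserted inequality. The second inequality is then immediate from the elementary estimate $r\le\log(\frac{6}{2-\delta})/\log(\frac{2}{2-\delta})$ recorded just after the definition \eqref{wrona0} of $r$.

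Since every ingredient has already been established, there is no genuine analytic obstacle here; the only points requiring care are bookkeeping ones, namely verifying that the $\psi_1$ norm defining $\mathbf{d}$ is taken with respect to the same excursion-stationary law that enters $\ccc$ (which is precisely the content of the remark that $\mathbf{d}$ does not depend on the initial distribution), and checking the two pointwise dominations $\ccC,\ccD\le\sup_{x\in C}\|\tau_C\|_{\psi_1}$ that allow one to collapse the right-hand side of Proposition \ref{pros1} into a single multiple of $\sup_{x\in C}\|\tau_C\|_{\psi_1}$.
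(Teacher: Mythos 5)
Your proposal is correct and follows exactly the paper's own argument: the authors prove this proposition in one line by applying Proposition \ref{pros1} with $\alpha=1$ and $f\equiv 1$ (so that $s_i(1)=\sigma(i+1)-\sigma(i)$ and $\ccc=\mathbf{d}$) together with the third estimate of Lemma \ref{le:auxiliary_regeneration_new} and the bound on $r$ following \eqref{wrona0}. The bookkeeping details you spell out (the dominations $\ccC,\ccD\le\sup_{x\in C}\|\tau_C\|_{\psi_1,\P_x}$, the latter using $Z_0(1)=1\le\tau_C$ pointwise) are precisely what the paper leaves implicit, and they check out.
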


\begin{proof}The proposition follows from the third estimate in Lemma \ref{le:auxiliary_regeneration_new} and the bound on $\ccc$ given in Proposition \ref{pros1} applied to $\alpha=1$ and the function $f \equiv 1$.
\end{proof}

Recall the stopping time $N$ defined in (\ref{N_def}) and note that by the law of large numbers, for $n \gg 1$ it should behave like $\pi^\ast(\theta)n$. The next lemma quantifies this intuition and gives a bound on deviations of $N$.

\begin{lema}\label{N_integr}
Assume that $m=1$ and the Markov chain is geometrically ergodic.
 Then for any $\varepsilon \in (0,1)$ and every integer $k \ge \pi^\ast(\theta)n(1+\varepsilon)$,
\begin{align*}
\bar{\P}_{x^\ast}(N > k) \le  \exp\Big(-\frac{k-\pi^\ast(\theta)n}{36\pi^\ast(\theta)^2{\bf d}^2\varepsilon^{-1}}\Big).
\end{align*}
In consequence,
\begin{displaymath}
\|(N - (1+\varepsilon)\pi^\ast(\theta)n)_+\|_{\psi_1,\bar{\P}_{x^\ast}} \le 144 \pi^\ast(\theta)^{2}{\bf
d}^2\varepsilon^{-1}.
\end{displaymath}
\end{lema}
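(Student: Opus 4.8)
The plan is to identify $\{N>k\}$ with a lower deviation of a sum of i.i.d.\ regeneration block lengths and then invoke Bernstein's $\psi_1$ inequality (Lemma~\ref{le:Bernstein}). Since $m=1$, the definition \eqref{N_def} reads $N=\inf\{i\ge0\colon\sigma(i)\ge n-1\}$, and as $\sigma(0)<\sigma(1)<\cdots$ is strictly increasing we have $\{N>k\}=\{\sigma(k)<n-1\}$. Setting $T_i=\sigma(i)-\sigma(i-1)$ for $i\ge1$ and discarding $\sigma(0)\ge0$ I would bound $\sigma(k)\ge\sum_{i=1}^k T_i$, so that
\begin{displaymath}
\bar{\P}_{x^\ast}(N>k)\le\bar{\P}\Big(\sum_{i=1}^k T_i<n-1\Big).
\end{displaymath}
By the regenerative structure $T_1,\ldots,T_k$ are i.i.d.\ with $\|T_i\|_{\psi_1}={\bf d}$ (this is precisely \eqref{d_def}); their common mean equals $1/\pi^\ast(\theta)$, which follows from the block-mean formula $\bar{\E}s_i(f)=\delta^{-1}\pi(C)^{-1}m\int f\,d\pi$ applied with $f\equiv1$ and $m=1$ (so that $s_i(1)=T_{i+1}$ and $\pi^\ast(\theta)=\delta\pi(C)$), giving $\E T_i=\delta^{-1}\pi(C)^{-1}=1/\pi^\ast(\theta)$.

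Next I would center and apply Bernstein. Put $\xi_i=1/\pi^\ast(\theta)-T_i$, so $\E\xi_i=0$; by the triangle inequality for $\|\cdot\|_{\psi_1}$ (the case $\alpha=1$ of Lemma~\ref{le:Orlicz_new}) and the estimate $\|1/\pi^\ast(\theta)\|_{\psi_1}=(1/\pi^\ast(\theta))/\log2\le{\bf d}$ (which holds because $\E T_i\le(\log2)\|T_i\|_{\psi_1}$ by Jensen's inequality), one obtains $\|\xi_i\|_{\psi_1}\le2{\bf d}$. The event $\{\sum T_i<n-1\}$ is exactly $\{\sum_{i=1}^k\xi_i>s\}$ with
\begin{displaymath}
s=\frac{k}{\pi^\ast(\theta)}-(n-1)=\frac{k-\pi^\ast(\theta)n}{\pi^\ast(\theta)}+1,
\end{displaymath}
and Lemma~\ref{le:Bernstein} applied to $\xi_1,\ldots,\xi_k$ yields
\begin{displaymath}
\bar{\P}\Big(\sum_{i=1}^k\xi_i\ge s\Big)\le\exp\Big(-\frac{s^2}{16k{\bf d}^2+4{\bf d}s}\Big).
\end{displaymath}

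The core computation, and the step I expect to be most delicate, is to show that this exponent is at least $(k-\pi^\ast(\theta)n)/(36\pi^\ast(\theta)^2{\bf d}^2\varepsilon^{-1})$. Writing $D=k-\pi^\ast(\theta)n$ and $a=1/\pi^\ast(\theta)$, so that $s=Da+1$, I would use the hypothesis $k\ge(1+\varepsilon)\pi^\ast(\theta)n$ to get $\pi^\ast(\theta)n\le D/\varepsilon$ and hence $k=D+\pi^\ast(\theta)n\le(1+\varepsilon^{-1})D$, together with $a\le(\log2){\bf d}$ to control the linear term $4{\bf d}s$ by a multiple of $D{\bf d}^2$. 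Both terms of the denominator are then bounded by constant multiples of $\varepsilon^{-1}D{\bf d}^2$, while the numerator $s^2=(Da+1)^2$ exceeds $D^2a^2$; the lower-order contributions (the additive constants coming from the ``$+1$'' in $s$) are absorbed by retaining the cross term $2Da$ in $s^2$, and a careful accounting lands on the constant $36$. This establishes the tail bound.

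Finally, the $\psi_1$-norm statement follows by integrating the tail. Set $W=(N-(1+\varepsilon)\pi^\ast(\theta)n)_+$ and $K=36\pi^\ast(\theta)^2{\bf d}^2\varepsilon^{-1}$. Because $N$ is integer-valued, for each $u\ge0$ we have $\P(W\ge u)=\bar{\P}_{x^\ast}(N>k)$ for the integer $k=\lceil(1+\varepsilon)\pi^\ast(\theta)n+u\rceil-1$, which satisfies $k-\pi^\ast(\theta)n\ge u-1$; combined with the trivial bound $\P(W\ge u)\le1$, which is valid and sufficient for $u\le K$, the first part gives $\P(W\ge u)\le e\,e^{-u/K}$ for all $u\ge0$. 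Then, using $\E\exp(W/c)=1+c^{-1}\int_0^\infty e^{u/c}\P(W\ge u)\,du$, the choice $c=4K$ produces $\E\exp(W/(4K))\le1+\frac{e}{c/K-1}=1+\frac{e}{3}<2$, so that $\|W\|_{\psi_1,\bar{\P}_{x^\ast}}\le4K=144\pi^\ast(\theta)^2{\bf d}^2\varepsilon^{-1}$, as claimed.
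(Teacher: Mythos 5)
Your proposal is correct and follows essentially the same route as the paper's own proof: the identification $\{N>k\}=\{\sigma(k)<n-1\}$, Bernstein's $\psi_1$ inequality (Lemma~\ref{le:Bernstein}) applied to the centered i.i.d.\ block lengths with $\|T_i-\E T_i\|_{\psi_1}\le 2{\bf d}$, the bounds $k\le 2\varepsilon^{-1}(k-\pi^\ast(\theta)n)$ and $\pi^\ast(\theta){\bf d}\ge 1$ to land on the constant $36$, and integration of the tail to get the $\psi_1$ estimate with the factor $4$ (your explicit treatment of the integer rounding in this last step is in fact more careful than the paper's). I verified that your deferred accounting---keeping $s=Da+1$ and absorbing the additive constants via the cross term $2Da$---does close with constant $36$ and even some room to spare; the paper takes the marginally simpler route of replacing $s$ by $Da$ via monotonicity of the Bernstein exponent in the deviation level.
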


\begin{proof}
Set $T_i = \sigma(i)- \sigma(i-1)$ for $i \ge 1$ and note that
$T_i$ are i.i.d. random variables with common mean $\E T_i =
\pi^\ast(\theta)^{-1} \le {\bf d}$. We have $\|T_i - \E T_i\|_{\psi_1} \le
2{\bf d}$. Thus for every integer
$k = \pi^\ast(\theta)n(1+\varepsilon + t)$, $t \ge 0$,

\begin{align*}
\bar{\P}_{x^\ast}(N > k) &= \bar{\P}_{x^\ast}(\sigma(k) < n - 1) \le \bar{\P}_{x^\ast}(\sum_{i=1}^{k}
T_i  < n-1) \\
&\le \bar{\P}_{x^\ast}(\sum_{i=1}^{k} (T_i - \pi^\ast(\theta)^{-1}) \le n-1 -
\pi^\ast(\theta)^{-1}k)  \nonumber\\&= \bar{\P}_{x^\ast}(\sum_{i=1}^{k} (T_i -
\pi^\ast(\theta)^{-1} ) \le -n(\varepsilon + t) - 1)\nonumber\\
&\le \exp(-\frac{n^2(\varepsilon +t)^2}{16{\bf
d}^2\pi^\ast(\theta)n(1+\varepsilon + t) + 4(\varepsilon +t)n{\bf d}}),
\end{align*}
where we used Lemma \ref{le:Bernstein}.

Taking into account that $\pi^\ast(\theta){\bf d} \ge 1$ and $(\varepsilon + t + 1)/(\varepsilon + t) \le 2/\varepsilon$, this yields
\begin{align*}
\bar{\P}_{x^\ast}(N > k) \le \exp\Big(-\frac{\pi^\ast(\theta) n(\varepsilon +t)}{36\pi^\ast(\theta)^2{\bf d}^2\varepsilon^{-1}}\Big)
= \exp\Big(-\frac{k-\pi^\ast(\theta)n}{36\pi^\ast(\theta)^2{\bf d}^2\varepsilon^{-1}}\Big).
\end{align*}
Thus for $Z = (N- (1+\varepsilon)\pi^\ast(\theta)n)_+$ and $a= 36\pi^\ast(\theta)^2{\bf d}^2\varepsilon^{-1}$, we have
\begin{align*}
\E \exp\Big(\frac{Z}{2a}\Big) =& 1 + \int_0^\infty e^t\bar{\P}_{x^\ast}(Z > 2at)dt =
 1+\int_0^{1/(2a)} e^tdt +\int_{1/(2a)}^\infty\exp(t - (2at -1)/a)dt \\
&= e^{1/(2a)} + e^{1/a}e^{-1/(2a)}= 2e^{1/(2a)} \le 4,
\end{align*}
which proves the Lemma.
\end{proof}

Repeating the proof of Theorem \ref{thm2} and using the second
part of Proposition \ref{thm1} instead of the first one, together with the above Lemma \ref{N_integr}, we get the following theorem. Note that by playing with the parameters $p,\varepsilon$ we can now make the subgaussian coefficient arbitrarily close to the optimal one (given by the CLT) at the cost of worsening the remaining constants.

\begin{theo}\label{thm:geometric_ergodicity_old}
Let $\{\bf X\}$ be a strongly aperiodic, geometrically ergodic Markov chain and
$\{\bf X^1\}$ its split version. Let $f\colon \mathcal{X} \to \R$ be a function for which the parameters $\cca,\ccb,\ccc$, defined in \eqref{definition_abc} are finite and such that $\E_\pi f = 0$. Let $p,q > 1$ satisfy $p^{-1}
+q^{-1} = 1$ and $\varepsilon \in (0,1)$. Then the following
inequality holds for all $t \ge 0$,
\begin{align*}
& \P_x(|\sum^{n-1}_{i=0}f(X_i)|>t)\ls
2\exp(-\frac{(tp^{-1})^{\alpha}}{(2\cca)^{\alpha}})+
2\pi^{\ast}(\theta)^{-1}\exp(-\frac{(tp^{-1})^{\alpha}}{(2\ccb)^{\alpha}})\\
&+ e^{8}\exp(-\frac{(p^{-1}q^{-1}t)^{\alpha}}{2\ccc^{\alpha}}) +
2^{1+\varepsilon/(1+\varepsilon)}\exp(-\frac{q^{-4}t^2}{2((1+\varepsilon)\sigma^2n+M(\varepsilon)
tq^{-2})}),
\end{align*}
\noindent where
\begin{displaymath}
\sigma^2 = \sigma^2(f) = \pi^\ast(\theta)\E s_0(f)^2
\end{displaymath}
and
\begin{displaymath}
M(\varepsilon) = \max\Big(\frac{4c(3\alpha^{-2}\log n)^{1/\alpha}(1+\varepsilon)}{3},
\frac{12\pi^\ast(\theta){\bf d}(1+\varepsilon)\sigma}{\varepsilon}\Big).
\end{displaymath}
\end{theo}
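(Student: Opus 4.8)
The plan is to follow the proof of Theorem \ref{thm2} almost verbatim, the only structural change being that the sum of i.i.d.\ regeneration blocks is now controlled by part (ii) of Proposition \ref{thm1} (the stopping-time/Bernstein version) in place of part (i). As before I would pass to the split chain, writing $\P_x(|\sum_{i=0}^{n-1}f(X_i)|>t)=\bar{\P}_{x^\ast}(|\sum_{i=0}^{n-1}f(X_i)|>t)$, and then invoke the decomposition (\ref{decomp}), which for $m=1$ reads $|\sum_{k=0}^{n-1}f(X_k)|\le U_n(f)+V_n(f)+W_n(f)$ with $U_n=|\sum_{k=0}^{\min(\sigma,n-1)}Z_k(f)|$, $V_n=|\sum_{i=1}^{N}s_{i-1}(f)|$ and $W_n=|1_{N>0}\sum_{k=n}^{\sigma(N)}Z_k(f)|$, where $N$ is the stopping time from (\ref{N_def}).

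Since $p^{-1}+q^{-1}=1$, the elementary inclusion $\{U_n+V_n+W_n>t\}\subseteq\{U_n>tp^{-1}/2\}\cup\{W_n>tp^{-1}/2\}\cup\{V_n>tq^{-1}\}$ holds (if all three summands stay below their thresholds their sum is at most $tp^{-1}+tq^{-1}=t$), so a union bound reduces the problem to three separate estimates. The bounds on $U_n$ and $W_n$ are taken over unchanged from the proof of Theorem \ref{thm2}: Markov's inequality together with the definition of $\cca$ gives $\bar{\P}_{x^\ast}(U_n>tp^{-1}/2)\le 2\exp(-(tp^{-1})^{\alpha}/(2\cca)^{\alpha})$, while the Pitman occupation-measure argument together with the definition of $\ccb$ gives $\bar{\P}_{x^\ast}(W_n>tp^{-1}/2)\le 2\pi^{\ast}(\theta)^{-1}\exp(-(tp^{-1})^{\alpha}/(2\ccb)^{\alpha})$. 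These are exactly the first two terms of the asserted inequality.

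The genuinely new ingredient is the estimate on $V_n$, where I would apply part (ii) of Proposition \ref{thm1} with $\xi_i=s_i(f)$, $c=\ccc$, threshold $tq^{-1}$, and the internal H\"older split using the same exponents $p,q$. The hypotheses are met because for $m=1$ the blocks $s_i(f)$ are genuinely i.i.d.\ (with $m=1$ no intermediate coordinates are left uncoupled by the regeneration, so the one-dependence of the general-$m$ case degenerates to independence), $N\le n$ is a stopping time with respect to $\ccF_i=\sigma(s_0,\ldots,s_{i-1},\sigma(0),\ldots,\sigma(i))$, and $s_i$ is independent of $\ccF_i$ by the strong Markov property at the regeneration times. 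Choosing the centering $a=(1+\varepsilon)\pi^{\ast}(\theta)n$ and invoking Lemma \ref{N_integr} to bound $\|(N-a)_+\|_{\psi_1,\bar{\P}_{x^\ast}}\le 144\,\pi^{\ast}(\theta)^2{\bf d}^2\varepsilon^{-1}$, I can control the parameter $\mu$ of Proposition \ref{thm1}; substituting $M=\ccc(3\alpha^{-2}\log n)^{1/\alpha}$ and this $\psi_1$ bound into the definition of $\mu$ gives $\mu^{-1}\le M(\varepsilon)$. Part (ii) then produces the third term (the unbounded $\ccc$-contribution $e^{8}\exp(-(p^{-1}q^{-1}t)^{\alpha}/2\ccc^{\alpha})$) and the final Bernstein term, whose subgaussian coefficient is $(1+\varepsilon)\,a\,\bar{\E}s_0(f)^2=(1+\varepsilon)^2 n\sigma^2$ with $\sigma^2=\pi^{\ast}(\theta)\bar{\E}s_0(f)^2$; absorbing the extra factor $(1+\varepsilon)$ into a harmless relabeling of $\varepsilon$ yields the stated coefficient $(1+\varepsilon)\sigma^2 n$, the asymptotic variance from the CLT.

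The main obstacle, and the only point where geometric ergodicity is essential, is the control of the stopping time $N$: to make the subgaussian coefficient match the true limiting variance $n\sigma^2$ one cannot simply truncate $N$ at its mean $\pi^{\ast}(\theta)n$, since the positive fluctuation of $N$ about its mean is of order $\sqrt{n}$ and its $\psi_1$ norm would grow with $n$. The device is to center slightly \emph{above} the mean, at $a=(1+\varepsilon)\pi^{\ast}(\theta)n$, so that $(N-a)_+$ records only a genuine large deviation of $N$ and therefore has $\psi_1$ norm of order $\pi^{\ast}(\theta)^2{\bf d}^2\varepsilon^{-1}$; this is precisely the content of Lemma \ref{N_integr}, itself proved via Bernstein's $\psi_1$ inequality (Lemma \ref{le:Bernstein}) applied to the i.i.d.\ inter-regeneration times, using $\E T_i=\pi^{\ast}(\theta)^{-1}$ and $\|T_i-\E T_i\|_{\psi_1}\le 2{\bf d}$. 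Letting $\varepsilon\to 0$ and $q\to 1$ (so $p\to\infty$) then drives the effective coefficient $(1+\varepsilon)^2 q^{-4}n\sigma^2$ down to $n\sigma^2$, giving a tail that asymptotically coincides with that of the limiting Gaussian, at the cost of inflating the multiplicative constants in the remaining terms, which are of lower order for the moderate range of $t$.
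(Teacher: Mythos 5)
Your proposal is correct and coincides with the paper's own (essentially one-sentence) proof: the paper likewise repeats the argument of Theorem \ref{thm2} verbatim for the terms $U_n(f)$ and $W_n(f)$, and replaces part (i) of Proposition \ref{thm1} by part (ii) applied to $V_n(f) = |\sum_{i=1}^{N}s_{i-1}(f)|$ (the blocks being genuinely i.i.d.\ since $m=1$), with the centering $a=(1+\varepsilon)\pi^\ast(\theta)n$ and the bound $\|(N-a)_+\|_{\psi_1,\bar{\P}_{x^\ast}}\le 144\,\pi^\ast(\theta)^2{\bf d}^2\varepsilon^{-1}$ from Lemma \ref{N_integr} controlling $\mu^{-1}\le M(\varepsilon)$. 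Your closing observation that the direct computation yields the subgaussian coefficient $(1+\varepsilon)^2 n\sigma^2$, to be reconciled with the stated $(1+\varepsilon)n\sigma^2$ by a relabeling of $\varepsilon$, is precisely the elementary constant bookkeeping the paper leaves implicit (compare the choice $q^4=1+\varepsilon=\sqrt{1+\eta}$ in the proof of Theorem \ref{thm:main_geometric_new}), so nothing essential is missing.
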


\begin{theo}\label{thm:main_geometric_new}
Assume that $\{X_n\}_{n\ge0}$ is a Harris recurrent strongly aperiodic Markov chain on the space $(\mathcal{X},\mathcal{B})$ admitting a unique stationary measure $\pi$. Assume furthermore that conditions \eqref{small} with $m=1$ and \eqref{eq:geometric_ergodicity_drift_new} are satisfied.
Let finally $s > 0$ and consider an arbitrary measurable function $g\colon \mathcal{X} \to \R$ such that
\begin{displaymath}
|g(x)| \le \kappa \Big(\log(V(x))\Big)^s
\end{displaymath}
for some $\kappa \ge 0$. Set $\alpha = 1/(s+1)$. Then for every $x \in \mathcal{X}$, $\eta \in (0,1]$ and $t > 0$,
\begin{align}\label{eq:main_est_geometric_erg_new_1}
& \P_x(|\sum^{n-1}_{i=0}g(X_i) - n\pi g|>t)\ls
2\exp(-\frac{(t\eta)^{\alpha}}{(25\cca)^{\alpha}})+
2\pi^{\ast}(\theta)^{-1}\exp(-\frac{(t\eta)^{\alpha}}{(25\ccb)^{\alpha}})\\
&+ e^{8}\exp(-\frac{(\eta t)^{\alpha}}{2\cdot 14^\alpha\ccc^{\alpha}}) +
2^{1+\eta/(2+\eta)}\exp(-\frac{t^2}{2((1+\eta)\sigma^2n+M(\eta)
t)}),\nonumber
\end{align}
\noindent where
\begin{align}\label{eq:sigmadef_new}
\sigma^2 &= \pi^\ast(\theta)\E s_0(g - \pi g)^2 \\
&= \lim_{n\to \infty} \frac{\Var(\sum_{i=0}^{n-1} g(X_n))}{n} = \E_\pi (g(X_0)-\pi g)^2 + 2\sum_{i=1}^\infty \E_\pi (g(X_0)-\pi g)(g(X_i)-\pi g),
\nonumber
\end{align}
and
\begin{displaymath}
M(\eta) = (1+\eta)^{3/4} \max\Big(\frac{4\ccc(3\alpha^{-2}\log n)^{1/\alpha}}{3},
\frac{29\pi^\ast(\theta){\bf d}\sigma}{\eta}\Big).
\end{displaymath}
and the numbers $\cca,\ccb,\ccc,\mathbf{d}$ satisfy
\begin{align*}
& \cca\ls  r^{1/\alpha}\Big((\max\{\ccA_{drift},\ccC_{drift}\})^{\alpha}+\ccC_{drift}^{\alpha}\Big)^{\frac{1}{\alpha}},\\
& \ccb \le r^{1/\alpha} \Big((\max\{\ccB_{drift},\ccC_{drift}\})^{\alpha}+\ccC_{drift}^{\alpha}\Big)^{\frac{1}{\alpha}},\\
& \ccc \le (2r)^{1/\alpha} \ccC_{drift},\\
& \mathbf{d} \le 2r\max\Big(\frac{\log(b(1-\lambda)^{-1}+K)}{\log 2},1\Big)\frac{1}{\log \frac{1}{1-\lambda}},
\end{align*}
where
\begin{displaymath}
r \le \frac{\log(\frac{6}{2-\delta})}{\log(\frac{2}{2-\delta})}
\end{displaymath} is the unique solution to the equation
\begin{align}
2^{1/r}\delta^{1-1/r} + 2^{1+1/r}(1-\delta)^{1-1/r} = 2,
\end{align}

\begin{align*}
\ccA_{drift} &= \kappa \frac{1}{\log \frac{1}{1-\lambda}} \max\Big(\frac{\log(V(x)1_{C^c}(x) + (b(1-\lambda)^{-1} + K)1_C(x))}{\log 2},1\Big) \times\\
&\phantom{aaa}\times \Big[\Big(\max(\frac{\log(V(x)\lambda^{-1}+b\lambda^{-1})}{\log 2},1)\Big)^{1-\alpha} + \frac{1}{(\log 2)^{1-\alpha}}(\pi|g|/\kappa)^\alpha\Big]^{1/\alpha},\\
\ccB_{drift} &= \kappa\frac{1}{\log \frac{1}{1-\lambda}} \max\Big(\frac{\log(\pi V + (b(1-\lambda)^{-1}+K)\pi(C))}{\log 2},1\Big)\times\\
&\phantom{aaa}\times\Big[\Big(\max(\frac{\log(\pi V\lambda^{-1}+b\lambda^{-1})}{\log 2},1)\Big)^{1-\alpha} + \frac{1}{(\log 2)^{1-\alpha}}(\pi|g|/\kappa)^\alpha\Big]^{1/\alpha},\\
\ccC_{drift} &= \kappa\frac{1}{\log \frac{1}{1-\lambda}}\max\Big(\frac{\log(b(1-\lambda)^{-1}+K)}{\log 2},1\Big)\times\\
&\phantom{aaa}\times \Big[\Big(\max(\frac{\log(b\lambda^{-1}+K\lambda^{-1})}{\log 2},1)\Big)^{1-\alpha} + \frac{1}{(\log 2)^{1-\alpha}}(\pi|g|/\kappa)^\alpha\Big]^{1/\alpha}.
\end{align*}
Moreover
\begin{displaymath}
\pi|g| \le \left\{
\begin{array}{l}
\kappa \Big(\log(b\pi(C)\lambda^{-1})\Big)^{s}\quad{\rm for}\;s \le 1,\\
\kappa \Big(\log(b\pi(C)\lambda^{-1}) + s-1\Big)^{s} - (s-1)^s \quad{\rm for}\; s>1.
\end{array}
\right.
\end{displaymath}
and
\begin{displaymath}
\pi V \le b\pi(C)\lambda^{-1}.
\end{displaymath}
\end{theo}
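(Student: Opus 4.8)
The plan is to deduce the statement directly from Theorem~\ref{thm:geometric_ergodicity_old} by a suitable choice of its free parameters $p,q,\varepsilon$ as functions of $\eta$, and then to replace the abstract quantities $\cca,\ccb,\ccc,\mathbf{d}$ by their drift-based estimates. First I would center the functional, setting $f=g-\pi g$, so that $\E_\pi f=0$ and $|\sum_{i<n}g(X_i)-n\pi g|=|\sum_{i<n}f(X_i)|$. The hypotheses of Theorem~\ref{thm:geometric_ergodicity_old} are then in force: strong aperiodicity is exactly condition~\eqref{small} with $m=1$, while the drift condition~\eqref{eq:geometric_ergodicity_drift_new} is equivalent to geometric ergodicity (hence $\mathbf{d}<\infty$), and the finiteness of $\cca,\ccb,\ccc$ follows from the drift estimates of Section~\ref{section_drifts}. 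The variance formula~\eqref{eq:sigmadef_new} requires no new argument: the first equality $\sigma^2=\pi^\ast(\theta)\E s_0(f)^2$ is the definition of the subgaussian coefficient in Theorem~\ref{thm:geometric_ergodicity_old}, and its identification with $\lim_n \Var(\cdot)/n$ together with the series expansion is the classical regenerative central limit theorem (see \cite{Che,MeynTwe}).

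The core step is the parameter matching. In Theorem~\ref{thm:geometric_ergodicity_old} the subgaussian denominator equals $2(q^4(1+\varepsilon)\sigma^2 n+q^2M(\varepsilon)t)$, the leading prefactor is $2^{1+\varepsilon/(1+\varepsilon)}$, and the three $\psi_\alpha$-type terms carry exponents governed by $p^{-1}$, $p^{-1}$ and $p^{-1}q^{-1}$ respectively. I would therefore take $\varepsilon$ of order $\eta$ and $q$ close to $1$ (so that $p$ is of order $1/\eta$), subject to $p^{-1}+q^{-1}=1$, chosen so that \emph{simultaneously} $q^4(1+\varepsilon)\le 1+\eta$ (yielding the subgaussian coefficient $(1+\eta)\sigma^2 n$), $\varepsilon/(1+\varepsilon)\le \eta/(2+\eta)$ (yielding the prefactor $2^{1+\eta/(2+\eta)}$), $2p\le 25/\eta$ and $p^{-1}q^{-1}\ge \eta/14$ (yielding the constants $25$ and $2\cdot 14^\alpha$ in the first three terms), and $q^2M(\varepsilon)\le M(\eta)$ (yielding the stated $M(\eta)$, whose factor $(1+\eta)^{3/4}$ and $1/\eta$ growth absorb $q^2$ and $(1+\varepsilon)/\varepsilon$). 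A single explicit assignment of $(p,q,\varepsilon)$ in terms of $\eta$, valid uniformly on $\eta\in(0,1]$, then converts the four terms of Theorem~\ref{thm:geometric_ergodicity_old} into the four terms of~\eqref{eq:main_est_geometric_erg_new_1} one by one.

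Finally I would substitute the explicit bounds. Proposition~\ref{pros1} controls $\cca,\ccb,\ccc$ through $\ccA,\ccB,\ccC,\ccD$, and since $m=1$ one has $\ccC,\ccD\le \ccC_{drift}$, so Proposition~\ref{prop:geometric_erg_new} gives the displayed inequalities for $\cca,\ccb,\ccc$ in terms of $\ccA_{drift},\ccB_{drift},\ccC_{drift}$; Proposition~\ref{prop:boundond_new} supplies the bound on $\mathbf{d}$; the bound $r\le \log(\tfrac{6}{2-\delta})/\log(\tfrac{2}{2-\delta})$ and the estimates on $\pi|g|$ and $\pi V$ are precisely those of Proposition~\ref{prop:geometric_erg_new}. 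I expect the main obstacle to be the parameter bookkeeping of the previous paragraph: the specific constants $25$, $14$ and $29$ indicate that the system of inequalities constraining $(p,q,\varepsilon)$ is essentially saturated, so the delicate point is to exhibit one assignment meeting all of them at once over the entire range $\eta\in(0,1]$, rather than only in the small- or large-$\eta$ regime. Everything else reduces to the already established propositions and to elementary monotonicity estimates.
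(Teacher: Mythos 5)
Your proposal is correct and follows essentially the same route as the paper: the paper also deduces the theorem from Theorem \ref{thm:geometric_ergodicity_old} with $f = g - \pi g$, and your system of constraints on $(p,q,\varepsilon)$ is exactly what the paper resolves via the single assignment $q^4 = 1+\varepsilon = \sqrt{1+\eta}$ (one checks that this choice satisfies all of your conditions, e.g. $p^{-1} = 1-(1+\eta)^{-1/8} \ge 2\eta/25$ and $12/\varepsilon \le 29/\eta$ on $\eta \in (0,1]$). The remaining substitutions via Propositions \ref{pros1}, \ref{prop:geometric_erg_new} and \ref{prop:boundond_new}, and the citation of the classical asymptotic-variance identity for \eqref{eq:sigmadef_new}, coincide with the paper's argument.
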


\begin{proof}[Proof of Theorem \ref{thm:main_geometric_new}]
One simply uses Theorem \ref{thm:geometric_ergodicity_old} with $f = g - \pi g$ and $q^4 = 1+\varepsilon = \sqrt{1+\eta}$ and performs some elementary calculations to obtain the dependence on $\eta$ in the inequalities.
To bound the parameters $\cca,\ccb,\ccc$ as well as $\pi V, \pi |g|$ one uses Propositions \ref{pros1}, whereas the bound on ${\bf d}$ follows from Proposition \ref{prop:boundond_new}.

The second and third inequality in \eqref{eq:sigmadef_new} is a well known fact concerning the asymptotic variance, see e.g. \cite{MeynTwe} or \cite{Che}.
\end{proof}

\paragraph{Example continued} The above bounds apply in particular to the algorithmic examples discussed in Section \ref{secRegularDrift}, where we estimated all the parameters except for ${\bf d}$ (for functions of polynomial growth), which however can be directly estimated by Proposition \ref{prop:boundond_new}. We remark that the constants obtained in this examples, even in the simplest cases (e.g. for the geometric distribution with parameter $1/2$ or for the Gaussian density and exponential proposal) are rather bad.  In the example with the geometric distribution, the value of parameters are for $s = 1$ of the order $10^3$, in the continuous example there are much worse, since they are additionally multiplied by the factor corresponding to $\delta$. It is of practical interest to derive bounds with a better dependence on $\delta$ and $\alpha$, however we are not aware of any previous results which would give exponential inequalities with any explicit constants.

Let us also mention that the parameter $\sigma$, which as we already explained in the Introduction, is important from the theoretical point of view, in practical situations is unknown and has to be estimated in terms of the drift. One approach would be to use the bound
\begin{align}\label{eq:boundonsigma}
\|s_0(f)\|_2 \le 2\alpha^{-1/2}\Gamma(2/\alpha)^{1/2} \|s_0(f)\|_{\psi_\alpha} = 2\alpha^{-1/2}\Gamma(2/\alpha)^{1/2} \ccc,
\end{align}
 which follows easily from integration by parts and combine it with estimates on $\ccc$ from Theorem \ref{thm:main_geometric_new}. This approach has however the disadvantage that the bound is bad for $\alpha$ small. Some other estimates, for functions bounded by some small power of $V$ are given in \cite{LaMieNie}. Their advantage stems from the fact that they do not go through the $\psi_\alpha$ norm and thus do not involve dependence on $\alpha$. On the other hand the dependence on $V$ is not logarithmic like in our case, but polynomial (which is optimal under the assumptions of \cite{LaMieNie}).

\begin{proof}[Proof of Theorem \ref{thm:A_new}]
All the parameters of the inequality from Theorem \ref{thm:main_geometric_new} are bounded in terms of the drift and $\delta$, the only exception being $\sigma$ and $\pi^\ast(\theta)^{-1} = \E_{\nu^\ast} \sigma(0)+1 \le {\bf d}$ and $M$ (which depends on ${\bf d}$ and $\sigma$). Thus the theorem follows from Proposition \ref{prop:boundond_new} and \eqref{eq:boundonsigma}.
\end{proof}

\subsection{Empirical processes\label{sec:empirical_MArkov_new}}

In this section we present estimates for empirical processes of geometrically ergodic Markov chains.
The interest in this type of random variables stems from their wide applicability in statistics and machine learning theory, e.g. model selection, M-estimation or other statistical methods based on minimization of some empirical criteria.

We want to obtain concentration inequalities for chains started from arbitrary initial conditions, so in addition to the parameter ${\bf d}$ we will introduce its counterpart, measuring how quickly the chain regenerates for a given starting point
\begin{displaymath}
{\bf e} = \|\sigma(0)\|_{\psi_1,\bar{\P}_{x^\ast}}.
\end{displaymath}
We will need this parameter since in the empirical process case we are not able to use the martingale argument of the second part of Proposition \ref{thm1} and so to obtain the inequalities with the subgaussian term close to the optimal one, we will have to consider separately the case of large and small $N$.

We remark that it can be bounded from above similarly as the parameter ${\bf d}$, namely we have

\begin{prop}\label{prop:boundone_new}
If the chain satisfies \eqref{small}  with $m=1$ and the drift condition \eqref{eq:geometric_ergodicity_drift_new}, then
\begin{align*}
{\bf e} \le &r\Big(\max\Big(\frac{\log(V(x)1_{C^c}(x) + (b(1-\lambda)^{-1}+K)1_C(x))}{\log 2},\frac{\log(b(1-\lambda)^{-1}+K)}{\log 2},1\Big)
+1\Big)\frac{1}{\log \frac{1}{1-\lambda}},
\end{align*}
where $r$ is given by equation \eqref{wrona0}.
\end{prop}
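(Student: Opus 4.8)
The plan is to mirror the one-line argument behind Proposition \ref{prop:boundond_new}, the only change being that we now track the initial law $x^\ast$ instead of the stationary regeneration. The starting point is the observation that, for the constant function $f\equiv 1$ and for $\alpha=1$, the parameter $\cca$ of \eqref{definition_abc} coincides with a shifted copy of $\sigma(0)$. Indeed, when $m=1$ and $f\equiv 1$ we have $Z_i(f)=1$ for every $i$, so $\sum_{i=0}^{\sigma(0)}|Z_i(f)|=\sigma(0)+1$ and hence $\cca=\|\sigma(0)+1\|_{\psi_1,\bar{\P}_{x^\ast}}$. Since $\sigma(0)\le\sigma(0)+1$ and the $\psi_1$-norm is monotone, ${\bf e}=\|\sigma(0)\|_{\psi_1,\bar{\P}_{x^\ast}}\le\cca$, so the whole task reduces to estimating $\cca$ for this particular choice of $f$ and $\alpha$.

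First I would feed this choice into Proposition \ref{pros1}, which for $\alpha=1$ gives $\cca\le r(\max\{\ccA,\ccC\}+\ccD)$, with $r$ the solution of \eqref{wrona0}. For $f\equiv 1$ and $m=1$ the three auxiliary quantities become transparent: $\ccA=\|\tau_C\|_{\psi_1,\P_x}$, $\ccC=\sup_{x\in C}\|\tau_C-1\|_{\psi_1,\P_x}\le\sup_{x\in C}\|\tau_C\|_{\psi_1,\P_x}$, and $\ccD=\sup_{x\in C}\|Z_0(f)\|_{\psi_1,\P_x}=\|1\|_{\psi_1}=1/\log 2$. Bounding $\ccA$ by the first estimate and $\ccC$ by the third estimate of Lemma \ref{le:auxiliary_regeneration_new} yields
\[
\max\{\ccA,\ccC\}\le \max\Big(\frac{\log(V(x)1_{C^c}(x)+(b(1-\lambda)^{-1}+K)1_C(x))}{\log 2},\frac{\log(b(1-\lambda)^{-1}+K)}{\log 2},1\Big)\frac{1}{\log\frac{1}{1-\lambda}}.
\]

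It then remains to absorb the block term $\ccD=1/\log 2$ into the right-hand side so as to produce exactly the additive $+1$ in the statement, and this constant bookkeeping is the only genuinely delicate point; all the probabilistic content has already been packaged in Proposition \ref{pros1} and Lemma \ref{le:auxiliary_regeneration_new}. The key is that the drift condition \eqref{eq:geometric_ergodicity_drift_new} forces a lower bound on $b(1-\lambda)^{-1}+K$: evaluating \eqref{eq:geometric_ergodicity_drift_new} on $C$ and using $V\ge 1$ gives $1\le (PV)(x)\le(1-\lambda)V(x)+b\le(1-\lambda)K+b$ for $x\in C$, whence $b(1-\lambda)^{-1}+K\ge(1-\lambda)^{-1}$ and therefore $\tfrac{\log(b(1-\lambda)^{-1}+K)}{\log 2}\ge\tfrac{\log\frac{1}{1-\lambda}}{\log 2}$. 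This shows that $\ccD=1/\log 2$ is dominated by a single copy of $\tfrac{1}{\log\frac{1}{1-\lambda}}$ relative to the principal maximum, so that collecting the estimates gives
\[
{\bf e}\le\cca\le r\Big(\max\Big(\frac{\log(V(x)1_{C^c}(x)+(b(1-\lambda)^{-1}+K)1_C(x))}{\log 2},\frac{\log(b(1-\lambda)^{-1}+K)}{\log 2},1\Big)+1\Big)\frac{1}{\log\frac{1}{1-\lambda}},
\]
which is the claim. Thus the main obstacle is not conceptual but lies entirely in this last comparison: controlling the contribution of the constant block $Z_0(f)$ at the regeneration epoch by one further additive unit of $\tfrac{1}{\log\frac{1}{1-\lambda}}$, rather than by a multiplicative factor as in the proof of Proposition \ref{prop:boundond_new}.
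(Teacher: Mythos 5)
You follow exactly the paper's route---the published proof is literally the combination of Proposition \ref{pros1} (applied with $\alpha=1$ and $f\equiv 1$) with the first and third estimates of Lemma \ref{le:auxiliary_regeneration_new}---and your reductions ${\bf e}\le\cca=\|\sigma(0)+1\|_{\psi_1,\bar{\P}_{x^\ast}}$, $\ccA=\|\tau_C\|_{\psi_1,\P_x}$, $\ccC\le\sup_{x\in C}\|\tau_C\|_{\psi_1,\P_x}$ and $\ccD=1/\log 2$ are all correct. The gap sits in the step you yourself single out as delicate. Write $M$ for the triple maximum in the statement. What you have at that point is $\cca\le r\big(M\cdot\frac{1}{\log\frac{1}{1-\lambda}}+\frac{1}{\log 2}\big)$, so the claimed bound $r(M+1)\frac{1}{\log\frac{1}{1-\lambda}}$ requires precisely $\frac{1}{\log 2}\le\frac{1}{\log\frac{1}{1-\lambda}}$, i.e.\ $\lambda\le 1/2$, which is not assumed. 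Your drift observation $b(1-\lambda)^{-1}+K\ge(1-\lambda)^{-1}$ is correct but points the wrong way: it shows $M\ge\log\frac{1}{1-\lambda}/\log 2$, hence only $\ccD\le M\cdot\frac{1}{\log\frac{1}{1-\lambda}}$, which yields the weaker bound $2rM\cdot\frac{1}{\log\frac{1}{1-\lambda}}$. Since the maximum contains the argument $1$, one always has $M+1\le 2M$, so dominating $\ccD$ by one copy of the \emph{principal term} proves strictly less than the statement; for $\lambda>1/2$ your chain of inequalities does not produce the proposition, and enlarging $M$ cannot help because $M$ appears on both sides of the required comparison.

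The clean repair exploits what your very first reduction throws away: ${\bf e}=\|\sigma(0)\|_{\psi_1,\bar{\P}_{x^\ast}}$ is the norm of $\sum_{i=0}^{\sigma-1}|Z_i(f)|$, which does \emph{not} contain the block at the regeneration epoch---and that block is exactly the source of the $\ccD$ term in the proof of Proposition \ref{pros1} (it enters there through the factor $\sup_{y\in C}\bar{\E}_{y,1}\exp(c^{-\alpha}|Z_0(f)|^{\alpha})$). Rerunning that proof for $\sum_{i=0}^{\sigma-1}$, this factor never appears: using $\sigma_C\le\tau_C$ for the initial excursion and H\"older's inequality exactly as in the paper, one gets for $c=r\max\{\ccA,\sup_{x\in C}\|\tau_C\|_{\psi_1,\P_x}\}$ the estimate $\bar{\E}_{x^\ast}\exp(c^{-1}\sigma(0))\le \delta\, 2^{1/r}\big(1-(1-\delta)^{1-1/r}2^{1/r}\big)^{-1}\le 2$ by the definition \eqref{wrona0} of $r$. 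Hence ${\bf e}\le r\max\{\ccA,\sup_{x\in C}\|\tau_C\|_{\psi_1,\P_x}\}$ with no additive term at all, and Lemma \ref{le:auxiliary_regeneration_new} then gives the proposition with the $+1$ as pure slack. To be fair, the paper's one-line proof leaves this same bookkeeping implicit, so you identified the right pressure point---but the drift-based absorption you propose does not close it.
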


\begin{proof}The proposition follows from the first and third estimate in Lemma \ref{le:auxiliary_regeneration_new} and the bound on $\cca$ given in Proposition \ref{pros1} applied to $\alpha=1$ and the function $f \equiv 1$.
\end{proof}

Our main result concerning empirical processes of Markov chains is the following

\begin{theo}\label{Markov_emp_thm}
Let $\{\bf X\}$ be a strongly aperiodic geometrically ergodic Markov chain and
$\{\bf X^1\}$ its split version. Let $\mathcal{F}$ be a countable
class of $\pi$-centered functions $f\colon \mathcal{X} \to \R$ with an envelope
$F$ and assume that for some $\alpha \in (0,1]$ the parameters
$\cca, \ccb, \ccc$ for the function $F$ are finite.

Denote
\begin{displaymath}
Z= \sup_{f\in\mathcal{F}}|\sum_{i=0}^{n-1}f(X_i)|.
\end{displaymath}

Let $\varepsilon \in (0,1/2)$ and denote $M=\ccc(3\alpha^{-2}\log(n))^{1/\alpha}$ and
\begin{displaymath}
\sigma^2 = \pi^\ast(\theta)\sup_{f\in\mathcal{F}} \E s_0(f)^2.
\end{displaymath}

Then
\begin{align*}
\bar{\P}_{x^\ast}\Big(Z \ge (1+7\varepsilon) \E Z + t\Big) &\le \exp\Big(-\frac{(1-2\varepsilon)^4 t^2}{2(1+\varepsilon)^2n\sigma^2}\Big) +
e\exp\Big(-\frac{t(1-2\varepsilon)^2}{2M(1+\varepsilon^{-1})(3+4\varepsilon^{-1})}\Big) \\
&+ e^8\exp\Big({-\frac{(\varepsilon (1-2\varepsilon)t)^\alpha}{2\ccc^\alpha}}\Big)
+e\exp\Big(-\frac{\varepsilon^2 n}{144{\pi^\ast(\theta)\bf d}^2}\Big)\\
& + 2\exp\Big(-\frac{(\varepsilon t)^\alpha}{(2\cca)^\alpha}\Big)
+2\pi^\ast(\theta)^{-1}\exp\Big(-\frac{(\varepsilon t)^\alpha}{(2\ccb)^\alpha}\Big).
\end{align*}
for all $t \ge C(\varepsilon)$, where
\begin{displaymath}
C(\varepsilon) = \varepsilon^{-1}\Big(\Big(9 + 9{\bf e}
+ 27\pi^\ast(\theta){\bf d}^2\varepsilon^{-1}\Big)\pi(F) \\
+ 9\Gamma(1+1/\alpha) \cca
+ 9\cdot 2^{1/\alpha-1}e\Gamma(1+1/\alpha) \log^{1/\alpha}(e/\pi^\ast(\theta))\ccb\Big).
\end{displaymath}
\end{theo}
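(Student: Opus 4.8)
The plan is to combine the excursion decomposition \eqref{decomp} with the empirical-process inequality of Proposition \ref{unbounded_emp_ind}, treating the random number of complete excursions $N$ by splitting on whether it exceeds a deterministic threshold. Writing $U = \sup_{f\in\mathcal{F}} U_n(f)$, $V = \sup_{f\in\mathcal{F}} V_n(f)$ and $W = \sup_{f\in\mathcal{F}} W_n(f)$, the decomposition gives $Z \le U + V + W$, so it suffices to control each supremum separately. The two boundary terms are the easy ones. Since $U \le \sum_{k=0}^{\sigma} F(X_k)$, whose $\bar{\P}_{x^\ast}$-$\psi_\alpha$ norm is exactly $\cca$, a direct Markov bound on $\{U > \varepsilon t\}$ produces the fifth summand, while repeating the Pitman occupation-measure estimate from the proof of Theorem \ref{thm2} for the envelope $F$ produces the sixth summand, controlling $\{W > \varepsilon t\}$ through $\ccb$ and the prefactor $\pi^\ast(\theta)^{-1}$; I would carry the harmless extra factor $2$ in these two exponents as slack to be consumed in the expectation comparison below.

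For the main term $V = \sup_{f} |\sum_{i=1}^{N} s_{i-1}(f)|$ I would introduce the deterministic cutoff $k_0 = \lceil (1+\varepsilon)\pi^\ast(\theta) n\rceil$. On $\{N > k_0\}$, Lemma \ref{N_integr} bounds the probability by a multiple of $\exp(-\varepsilon^2 n/(\pi^\ast(\theta){\bf d}^2))$, giving the fourth summand. On $\{N\le k_0\}$ one has $V \le \tilde V := \max_{k\le k_0}\sup_{f}|\sum_{i=1}^{k} s_{i-1}(f)|$. Because the blocks $s_i(f)$ are only one-dependent, I would split the index set into its even and odd parts — exactly as in the proof of Proposition \ref{thm1}(i) — turning each half into a genuine empirical process of independent blocks, whose envelope $s_0(F)$ has $\psi_\alpha$ norm $\ccc$ and whose per-block weak variance is $\sup_{f}\E s_0(f)^2$, and apply Proposition \ref{unbounded_emp_ind} to each. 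Since the two halves jointly involve $k_0$ blocks, the variance bookkeeping yields the subgaussian coefficient $(1+\varepsilon)n\sigma^2 = (1+\varepsilon)\pi^\ast(\theta)n\sup_f \E s_0(f)^2$, and the Gaussian, linear and $\psi_\alpha$ outputs of Proposition \ref{unbounded_emp_ind} are precisely the first three terms of the assertion.

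The delicate point is the centering: Proposition \ref{unbounded_emp_ind} controls deviations above $(1+\varepsilon)\E \tilde V$ (more precisely above $(1+\varepsilon)$ times the expected full $k_0$-block supremum), whereas the statement is phrased through $(1+7\varepsilon)\E Z$. I would bridge the two in two moves. First, $\E Z \ge \E V - \E U - \E W$, and $\E U$, $\E W$ are estimated via $\E|Y|\le \Gamma(1+1/\alpha)\|Y\|_{\psi_\alpha}$ by multiples of $\cca$ and $\ccb$, the factor $\log^{1/\alpha}(e/\pi^\ast(\theta))$ in the $\ccb$ contribution arising when integrating the $\pi^\ast(\theta)^{-1}$-inflated tail of $W$. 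Second, the gap between the expected $k_0$-block supremum and $\E V$ is at most the expected contribution of the blocks indexed between $N$ and $k_0$, controlled by the expected number of such blocks times the mean $F$-block size, i.e. by a multiple of $(\pi^\ast(\theta){\bf d}^2\varepsilon^{-1} + {\bf e})\pi(F)$. Imposing $t\ge C(\varepsilon)$ is exactly what guarantees that each additive correction, together with the good-event thresholds $\{U\le \varepsilon t\}$ and $\{W\le \varepsilon t\}$, is absorbable, so that the multiplicative discrepancy is upgraded from $(1+\varepsilon)\E \tilde V$ to $(1+7\varepsilon)\E Z$; this accounts for the explicit shape of $C(\varepsilon)$, with its $\cca,\ccb,{\bf d},{\bf e},\pi(F)$ contributions.

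The main obstacle is precisely this interplay between the random block count and the supremum over $\mathcal{F}$: in contrast with the additive-functional case of Theorem \ref{thm:geometric_ergodicity_old}, the martingale / stopping-time device of Proposition \ref{thm1}(ii) is not available for suprema, which is what forces the deterministic truncation at $k_0$, the even/odd independence split, and the separate handling of $\{N\le k_0\}$ and $\{N> k_0\}$. Keeping the variance accounting tight enough to retain the subgaussian constant near $(1+\varepsilon)^2 n\sigma^2$ while simultaneously absorbing the three distinct expectation corrections, at the cost of only a multiplicative $(1+7\varepsilon)$ in front of $\E Z$ and the additive threshold $C(\varepsilon)$, is the real work of the proof.
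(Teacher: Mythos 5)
Your overall architecture matches the paper's proof (the decomposition into $U_n(F)$, $V_n$, $W_n$, the $\cca$/$\ccb$ boundary bounds via Pitman's occupation formula, truncation of $N$ at $k_0$ via Lemma \ref{N_integr}, an application of Proposition \ref{unbounded_emp_ind}, and absorption of additive corrections through $t\ge C(\varepsilon)$), but two of your steps fail as described. First, the even/odd split is both unnecessary and fatal to the stated constants. The theorem assumes strong aperiodicity, i.e.\ $m=1$, and then $\theta$ is a genuine atom of the split chain, so the blocks $s_i(f)$ are i.i.d., not merely one-dependent (one-dependence arises only for skeletons with $m>1$); this is precisely why the paper restricts this theorem --- and Theorem \ref{thm:geometric_ergodicity_old} --- to $m=1$, and why its proof applies Proposition \ref{unbounded_emp_ind} once, to the full block sequence $\max_{k\le \lfloor(1+\varepsilon)\pi^\ast(\theta)n\rfloor}\sup_f|\sum_{i=1}^k s_{i-1}(f)|$. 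Worse, the split is incompatible with an expectation-centered Talagrand-type bound: Proposition \ref{unbounded_emp_ind} centers each half at $(1+\varepsilon)$ times the expected supremum of \emph{that half}, and the triangle inequality forces your combined centering to be $(1+\varepsilon)(\E S_{\mathrm{even}}+\E S_{\mathrm{odd}})$; there is no reverse triangle inequality here, and already for a singleton class $\mathcal{F}=\{f\}$ one has $\E S_{\mathrm{even}}+\E S_{\mathrm{odd}}\approx\sqrt{2}\,\E S_{\mathrm{full}}$, so you can never return to a centering of the form $(1+7\varepsilon)\E Z$ with $\varepsilon$ small (you would also lose a factor $2$ in the subgaussian coefficient, since the threshold $t/2$ gets squared against only half the blocks). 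The even/odd device is harmless in Proposition \ref{thm1}(i) only because there the summands are centered at zero rather than at the mean of a supremum.

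Second, your bridging estimate between the expectation of the deterministic $k_0$-block supremum and $\E V$ is quantitatively false. Since $N$ concentrates around $\pi^\ast(\theta)n$ while $k_0=\lceil(1+\varepsilon)\pi^\ast(\theta)n\rceil$, the expected number of blocks indexed between $N$ and $k_0$ is of order $\varepsilon\pi^\ast(\theta)n$, and multiplying by the mean block size $\bar{\E}s_0(F)=\pi^\ast(\theta)^{-1}\pi(F)$ gives an additive gap of order $\varepsilon n\pi(F)$ --- growing linearly in $n$ --- not a multiple of $({\bf e}+\pi^\ast(\theta){\bf d}^2\varepsilon^{-1})\pi(F)$, hence not absorbable into the $n$-independent threshold $C(\varepsilon)$. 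The paper's Lemma \ref{expectation_estimate} avoids exactly this trap with a down-then-up argument: by exchangeability of the block vectors and Jensen's inequality it passes from $n_2=\lfloor(1+\varepsilon)\pi^\ast(\theta)n\rfloor$ blocks \emph{down} to $n_1=\lfloor(1-\varepsilon)\pi^\ast(\theta)n\rfloor$ blocks at the multiplicative cost $n_2/(n_1+1)\le 1+4\varepsilon$ (this, combined with $(1+4\varepsilon)(1+\varepsilon)\le 1+7\varepsilon$, is where the constant in front of $\E Z$ really comes from), and only then compares $n_1$ blocks with $N$ blocks, using Doob's optional sampling on $\{N\ge n_1\}$ at no cost and paying additively only for $(n_1-N)_+$, whose expectation is of constant order in $n$ (a Chebyshev computation in terms of ${\bf e}$ and ${\bf d}$) precisely because $n_1$ lies \emph{below} the typical value of $N$. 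This lemma is the real work of the proof and is absent from your sketch; without it, and without dropping the even/odd split, your argument proves at best a weaker statement with a centering constant bounded away from $1$ and a threshold growing with $n$.
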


\paragraph{Remark} Let us now discuss certain aspects of the above rather technical (and not very friendly looking) theorem, which may help understand its applicability and limitations.

\paragraph{1.} Let us point out that the involved form of the estimate is a result of the effort to obtain a probability bound which for small $t$ would exhibit the subgaussian behavior with the subgaussian parameter arbitrarily close to the weak variance (which for Donsker classes of functions is responsible for the concentration of the limiting Gaussian distribution). Clearly by choosing $\varepsilon$ small enough one can approach the right subgaussian coefficient arbitrarily close at the cost of worsening the constants in the other terms and decreasing the interval for which the estimate behaves like in the Gaussian case (the length of this interval is of the order $n^{1/(2-\alpha)}$ for $\alpha < 1$ and $n/\log n$ for $\alpha = 1$, which is greater then the CLT scaling $\sqrt{n}$).

\paragraph{2.} Second, let us note that the fourth term of the estimate does not involve $t$, but depends only on $n$. This is a consequence of our method and we do not know if one could remove this term completely without worsening the dependence of the parameters $\cca,\ldots,{\bf d}$ in the other terms. It is relatively easy to replace this term e.g. by $\exp(-\varepsilon^2 t/(C\pi^\ast(\theta){\bf d}^2\ccc))$, where $C$ is a universal constant, however, since standard applications of this type of tail estimates involve $t$ of order at most $n$, for which the 'troublesome' term is dominated by terms involving $t$, we do not pursue this direction here (see \cite{MarkovOrlicz} where inequalities of this type are proven in a slightly different context).

\paragraph{3.} Finally, we would like to comment on the threshold $C(\varepsilon)$ appearing in the estimates. It is of order $\varepsilon^{-2}$ and is independent of $n$, so it does not pose a problem in typical applications (when $t$ is of order $n^\gamma$). Moreover, even without this threshold the estimate would not yield any information for $t$ of order smaller then $\varepsilon^{-2}$, since the denominator of the exponent in the second term is of the order $\varepsilon^{-2}$. At present we do not know whether the dependence of the threshold and the estimates on $\varepsilon$ and parameters ${\bf d}$,${\bf e}$ and $\pi^\ast(\theta)$ can be improved (note that ${\bf d},{\bf e}$ are not related just to the function $F$ but are 'global' parameters of the chain). At the end of the article we will present an example to argue that the above theorem can give meaningful estimates even if we replace $1 + 7\varepsilon$ by an arbitrary constant.

\paragraph{4.} In particular the above theorem implies Theorem \ref{thm:B_new} from the Introduction. The dependence of the parameters under the drift condition, in the case of parameters $\cca,\ccb,\ccc,{\bf d}$ can be derived just as in Theorem \ref{thm:main_geometric_new}, while the parameter ${\bf e}$ has been estimated in Proposition \ref{prop:boundone_new}. We leave the details to the reader.

\medskip
Let us now pass to the proof of Theorem \ref{Markov_emp_thm}. The next lemma is a strengthening of an analogous estimate from  \cite{Rad} (in particular it incorporates the dependence on
$\varepsilon$ to the inequality). Its estimate will be used to compare the expectation of the random sum introduced with the regeneration method and the random variable $Z$.

\begin{lema}\label{expectation_estimate} Let $\{\bf X\}$ be a strongly aperiodic Harris ergodic Markov chain and
$\{\bf X^1\}$ its split version. Let $\mathcal{F}$ be a countable
class of functions $f\colon \mathcal{X} \to \R$ with an envelope
$F$. Then for every $\varepsilon\in (0,1/2)$,
\begin{align*}
\bar{\E}_{x^\ast} \Big\|\sum_{i=1}^{\lfloor (1+\varepsilon)\pi^\ast(\theta)n\rfloor} s_{i-1}(f)\Big\|_{\mathcal{F}} \le &(1+4\varepsilon)\Big(\bar{\E}_{x^\ast} \Big\|\sum_{i=1}^{N} s_{i-1}(f)\Big\|_{\mathcal{F}}\\
&+ \Big(2+ 2\bar{\E}_{x^\ast}  \sigma(0)
+ 3\frac{\pi^\ast(\theta){\bar{\E}_{x^\ast}  (\sigma(1)-\sigma(0))^2}}{\varepsilon}\Big)\pi(F)\Big).
\end{align*}
\end{lema}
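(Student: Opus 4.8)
The plan is to reduce everything to a deterministic number of i.i.d.\ blocks and then exploit a monotonicity property of the expected supremum. First I would record that, since the chain is strongly aperiodic and $\pi f=0$ for every $f\in\mathcal F$, the blocks $s_0(f),s_1(f),\dots$ are, under $\bar{\P}_{x^\ast}$, i.i.d.\ and centered ($\bar{\E}_{x^\ast}s_i(f)=\pi^\ast(\theta)^{-1}\pi(f)=0$). Writing $k=\lfloor(1+\varepsilon)\pi^\ast(\theta)n\rfloor$ and $M_m=\sum_{i=1}^m s_{i-1}(\cdot)$, which is a mean-zero martingale with values in $\ell_\infty(\mathcal F)$ for the filtration $\mathcal F_m=\sigma(s_0,\dots,s_{m-1},\sigma(0),\dots,\sigma(m))$, the left-hand side is exactly $\phi(k)$, where $\phi(m):=\bar{\E}_{x^\ast}\|M_m\|_{\mathcal F}$ (this does not depend on $x^\ast$, the first block being fresh after the initial regeneration). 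The goal is then to compare the deterministic-index quantity $\phi(k)$ with the stopped quantity $\bar{\E}_{x^\ast}\|M_N\|_{\mathcal F}$, and I may assume $\pi(F)$, $\bar{\E}_{x^\ast}\sigma(0)$ and $\bar{\E}_{x^\ast}(\sigma(1)-\sigma(0))^2$ finite, else the stated bound is vacuous.

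The conceptual heart is that $m\mapsto\phi(m)/m$ is non-increasing. I would prove this by the standard exchangeability/averaging trick: from the identity $M_{m+1}=\frac1m\sum_{j=1}^{m+1}(M_{m+1}-s_{j-1})$, the triangle inequality for $\|\cdot\|_{\mathcal F}$ gives $\|M_{m+1}\|_{\mathcal F}\le\frac1m\sum_{j=1}^{m+1}\|M_{m+1}-s_{j-1}\|_{\mathcal F}$, and taking expectations (each summand being a sum of $m$ i.i.d.\ blocks, hence distributed as $M_m$) yields $\phi(m+1)\le\frac{m+1}{m}\phi(m)$. With $k_1=\lceil(1-\varepsilon)\pi^\ast(\theta)n\rceil\le k$ this gives $\phi(k)\le(k/k_1)\phi(k_1)\le(1+4\varepsilon)\phi(k_1)$, using $(1+\varepsilon)/(1-\varepsilon)\le1+4\varepsilon$ for $\varepsilon\le1/2$ (modulo harmless floor/ceiling corrections, legitimate for the large $n$ that is the regime of interest). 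This multiplicative comparison is what allows the factor $1+4\varepsilon$; a naive splitting $M_k=M_N+(M_k-M_N)$ followed by the triangle inequality would be far too lossy.

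Next I would relate $\phi(k_1)$ to $\bar{\E}_{x^\ast}\|M_N\|_{\mathcal F}$. Since $N\vee k_1\ge k_1$ is a stopping time and $M$ is a martingale, optional sampling together with conditional Jensen gives $\phi(k_1)=\bar{\E}_{x^\ast}\|M_{k_1}\|_{\mathcal F}\le\bar{\E}_{x^\ast}\|M_{N\vee k_1}\|_{\mathcal F}=\bar{\E}_{x^\ast}[\|M_N\|_{\mathcal F}1_{N\ge k_1}]+\bar{\E}_{x^\ast}[\|M_{k_1}\|_{\mathcal F}1_{N<k_1}]$; this is made rigorous by truncating $N$ at $K$, applying bounded optional sampling, and letting $K\to\infty$ by dominated convergence, the dominating function $\sum_{i=1}^{N\vee k_1}s_{i-1}(F)$ being integrable by Wald's identity since $\pi(F)<\infty$ and $\bar{\E}_{x^\ast}N<\infty$. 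On $\{N<k_1\}$ I would write $M_{k_1}=M_N+(M_{k_1}-M_N)$; the resulting $\|M_N\|_{\mathcal F}1_{N<k_1}$ piece then recombines with the first term into the full $\bar{\E}_{x^\ast}\|M_N\|_{\mathcal F}$, leaving the clean remainder $\bar{\E}_{x^\ast}[\|M_{k_1}-M_N\|_{\mathcal F}1_{N<k_1}]$. Bounding the supremum by the envelope and using that, conditionally on $\{N=l\}$, the blocks $s_l,\dots,s_{k_1-1}$ are fresh (independent of $\{N=l\}$, each with mean $\pi^\ast(\theta)^{-1}\pi(F)$), this remainder is at most $\pi^\ast(\theta)^{-1}\pi(F)\,\bar{\E}_{x^\ast}(k_1-N)_+$.

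The hard part will be the final estimate $\bar{\E}_{x^\ast}(k_1-N)_+\le\pi^\ast(\theta)\big(2+2\bar{\E}_{x^\ast}\sigma(0)+3\pi^\ast(\theta)\bar{\E}_{x^\ast}(\sigma(1)-\sigma(0))^2/\varepsilon\big)$, which after multiplication by $\pi^\ast(\theta)^{-1}\pi(F)$ produces exactly the additive term in the statement. Here I would use $\bar{\E}_{x^\ast}(k_1-N)_+=\sum_{j=0}^{k_1-1}\bar{\P}_{x^\ast}(N\le j)=\sum_{j=0}^{k_1-1}\bar{\P}_{x^\ast}(\sigma(j)\ge n-1)$ and control each tail by Chebyshev's inequality applied to $\sigma(j)=\sigma(0)+\sum_{l=1}^jT_l$, where $T_l=\sigma(l)-\sigma(l-1)$ are i.i.d.\ with $\bar{\E}T_l=\pi^\ast(\theta)^{-1}$ and $\Var\sigma(j)\le\Var\sigma(0)+j\bar{\E}_{x^\ast}T_1^2$ (so only the second moment appearing in the statement is needed). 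Writing the deviation as $n-1-\bar{\E}_{x^\ast}\sigma(j)\gtrsim\varepsilon n+\pi^\ast(\theta)^{-1}(k_1-1-j)$ and summing the resulting $\sum_i(\varepsilon n+\pi^\ast(\theta)^{-1}i)^{-2}\le\pi^\ast(\theta)/(\varepsilon n)$ yields the $3\pi^\ast(\theta)^2\bar{\E}_{x^\ast}(\sigma(1)-\sigma(0))^2/\varepsilon$ contribution, while the boundary region where Chebyshev saturates at $1$, together with the burn-in $\sigma(0)$, accounts for the $\pi^\ast(\theta)(2+2\bar{\E}_{x^\ast}\sigma(0))$ contribution. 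Collecting $\phi(k)\le(1+4\varepsilon)\phi(k_1)\le(1+4\varepsilon)\big(\bar{\E}_{x^\ast}\|M_N\|_{\mathcal F}+\pi^\ast(\theta)^{-1}\pi(F)\,\bar{\E}_{x^\ast}(k_1-N)_+\big)$ gives the claim, the careful bookkeeping of the constants in this last large-deviation sum being where most of the effort lies.
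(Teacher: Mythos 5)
Your argument is correct and, at its core, follows the same architecture as the paper's own proof: compare the sum at deterministic index $k=\lfloor(1+\varepsilon)\pi^\ast(\theta)n\rfloor$ with the one at index roughly $(1-\varepsilon)\pi^\ast(\theta)n$ by an exchangeability argument; pass to the stopped sum via optional sampling for the submartingale $m\mapsto\|\sum_{i=1}^m s_{i-1}(f)\|_{\mathcal F}$ together with the Wald-type overshoot bound $\pi^\ast(\theta)^{-1}\pi(F)\,\bar{\E}_{x^\ast}(k_1-N)_+$ (your observation that $s_l,s_{l+1},\dots$ are independent of $\{N=l\}$ is exactly the paper's step); and finally control $\bar{\E}_{x^\ast}(k_1-N)_+$ by separating the burn-in from a Chebyshev estimate on the i.i.d.\ part. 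The one genuinely different substep is the first: you obtain the monotonicity of $m\mapsto\phi(m)/m$ from the averaging identity $M_{m+1}=\frac1m\sum_{j=1}^{m+1}(M_{m+1}-s_{j-1})$, whereas the paper conditions on $\mathcal B=\sigma(\sum_{i=1}^{n_2}s_{i-1}(f)\colon f\in\mathcal F)$ and applies conditional Jensen, stepping from $n_1=\lfloor(1-\varepsilon)\pi^\ast(\theta)n\rfloor$ to $n_1+1$ at the price of an extra term $\bar{\E}_{x^\ast}s_0(F)=\pi^\ast(\theta)^{-1}\pi(F)$, which is then absorbed via $\pi^\ast(\theta)^{-1}\le\pi^\ast(\theta)\bar{\E}T_1^2\le\varepsilon^{-1}\pi^\ast(\theta)\bar{\E}T_1^2$ — this absorption is precisely where the constant $3$ (rather than $2$) in front of the variance term comes from. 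Your route avoids that extra term, and the unit of slack you need instead for the ceiling correction ($\bar{\E}_{x^\ast}(k_1-N)_+\le\bar{\E}_{x^\ast}(n_1-N)_++1$) is available from the same inequality $1\le\varepsilon^{-1}\pi^\ast(\theta)^2\bar{\E}T_1^2$, so the stated constants do close; the degenerate case $k<k_1$ (possible when $\pi^\ast(\theta)n$ is small) is also harmless, since the stopping-time comparison can be run at $k$ directly with $(k-N)_+\le(k_1-N)_+$.

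Two points should be repaired in a write-up. First, your line `$\Var\,\sigma(j)\le\Var\,\sigma(0)+j\bar{\E}_{x^\ast}T_1^2$' invokes $\bar{\E}_{x^\ast}\sigma(0)^2$, which the lemma does not assume — only the first moment of $\sigma(0)$ appears in the statement. The fix, which your final bookkeeping already presupposes and which the paper carries out, is a union bound $\bar{\P}_{x^\ast}(\sigma(j)\ge n-1)\le\bar{\P}_{x^\ast}(2(T_0+1)\ge tn)+\bar{\P}_{x^\ast}(\sum_{l=1}^{j}T_l\ge(1-t/2)n)$, integrating the burn-in piece by Markov's inequality (first moment only, producing the $2+2\bar{\E}_{x^\ast}\sigma(0)$ contribution) and applying Chebyshev only to the i.i.d.\ part via $\bar{\E}T_1^2$. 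Second, you assume $\pi f=0$ for all $f\in\mathcal F$; this is not among the lemma's stated hypotheses, but it is in fact needed for the optional-sampling step in the paper's proof as well (it is what makes $\|\sum_{i\le m}s_{i-1}(f)\|_{\mathcal F}$ a submartingale), and it holds in the only setting where the lemma is applied (Theorem \ref{Markov_emp_thm}), so this is a fair reading rather than an error.
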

\begin{proof}
Let $n_1= \lfloor (1-\varepsilon)\pi^\ast(\theta)n\rfloor$, $n_2 =
\lfloor (1+\varepsilon)\pi^\ast(\theta)n\rfloor$ and denote
$\mathcal{B} = \sigma(\sum_{i=1}^{n_2} s_{i-1}(f)\colon f
\in\mathcal{F})$. By Jensen's inequality and exchangeability of
random vectors $(s_i(f))_{f\in \mathcal{F}}$, $i=0,\ldots,n-1$, if $n_1 + 1 \le n_2$, we
have
\begin{align*}\label{expectation1}
\bar{\E}_{x^\ast}  s_0(F) + \bar{\E}_{x^\ast}  \Big\|\sum_{i=1}^{n_1} s_{i-1}(f)\Big\|_{\mathcal{F}} &\ge \bar{\E}_{x^\ast}  \Big\|\sum_{i=1}^{n_1+1} s_{i-1}(f)\Big\|_{\mathcal{F}}\\
&\ge \bar{\E}_{x^\ast}
\Big\|\sum_{i=1}^{n_1+1} \E(s_{i-1}(f)|\mathcal{B})
\Big\|_{\mathcal{F}} = \frac{n_1+1}{n_2}\bar{\E}_{x^\ast} \Big\|\sum_{i=1}^{n_2}
s_{i-1}(f) \Big\|_{\mathcal{F}}.\nonumber
\end{align*}
Combining the above inequality with the trivial case $n_1 = n_2$, we get
\begin{align}
\bar{\E}_{x^\ast}  \Big\|\sum_{i=1}^{n_2}
s_{i-1}(f) \Big\|_{\mathcal{F}} \le \max(\frac{n_2}{n_1+1},1)\Big(\bar{\E}_{x^\ast}  s_0(F) + \bar{\E}_{x^\ast}  \Big\|\sum_{i=1}^{n_1} s_{i-1}(f)\Big\|_{\mathcal{F}}\Big).
\end{align}
Moreover
\begin{align}\label{expectation2}
\bar{\E}_{x^\ast}  \Big\|\sum_{i=1}^{n_1} s_{i-1}(f)\Big\|_{\mathcal{F}} \le& \bar{\E}_{x^\ast}
\Big\|\sum_{i=1}^{n_1} s_{i-1}(f)\Big\|_{\mathcal{F}}1_{\{N\ge
n_1\}}+ \bar{\E}_{x^\ast}  \Big\|\sum_{i=1}^{N}
s_{i-1}(f)\Big\|_{\mathcal{F}}1_{\{N<n_1\}} \\
&+ \bar{\E}_{x^\ast}
\Big\|\sum_{i=N+1}^{n_1}
s_{i-1}(f)\Big\|_{\mathcal{F}}1_{\{N<n_1\}}.\nonumber
\end{align}
By Doob's optional sampling theorem,
\begin{align}\label{expectation3}
\bar{\E}_{x^\ast}
\Big\|\sum_{i=1}^{n_1} s_{i-1}(f)\Big\|_{\mathcal{F}}1_{\{N\ge
n_1\}} \le \bar{\E}_{x^\ast}  \Big\|\sum_{i=1}^{N} s_{i-1}(f)\Big\|_{\mathcal{F}}1_{\{N\ge
n_1\}}.
\end{align}
Moreover, by independence of $(s_i(f))_{f\in\mathcal{F},i\ge N}$ and
$N$, the third summand on the right hand side of (\ref{expectation2}) does not exceed
\begin{displaymath}
\bar{\E}_{x^\ast} (n_1-N)_+\bar{\E}_{x^\ast}  s_0(F) =  \pi^\ast(\theta)^{-1} \pi(F)\bar{\E}_{x^\ast} (n_1-N)_+
\end{displaymath}
To bound $\bar{\E}_{x^\ast} (n_1-N)_+$ we can proceed as in the proof of Lemma
\ref{N_integr}, however this time we do not need exponential inequalities. Set again $T_i = \sigma(i) -
\sigma(i-1)$ for $i \ge 1$ and additionally $T_0 = \sigma(0)$.

For $t\in (0,n_1/(\pi^\ast(\theta)n))$ we have

\begin{align*}
\bar{\P}_{x^\ast}((n_1-&N)_+ > \pi^\ast(\theta)nt) =
\bar{\P}_{x^\ast}(N< n_1-\pi^\ast(\theta)nt)\\
& \le \bar{\P}_{x^\ast}(\sigma(\lfloor n_1-\pi^\ast(\theta)nt\rfloor ) \ge n-1) = \bar{\P}_{x^\ast}\Big(\sum_{i=0}^{\lfloor n_1-\pi^\ast(\theta)nt\rfloor}
T_i \ge n -1\Big) \\
&= \bar{\P}_{x^\ast}(T_0 \ge t n/2-1) + \bar{\P}_{x^\ast}\Big(\sum_{i=1}^{\lfloor n_1-\pi^\ast(\theta)nt\rfloor} T_i \ge
(1-t/2)n\Big) \\
&\le \bar{\P}_{x^\ast}(2(T_0+1) \ge tn) +
\bar{\P}_{x^\ast}\Big(\sum_{i=1}^{\lfloor n_1-\pi^\ast(\theta)nt\rfloor}(T_i - \E T_i) \ge (1-t/2)n -
(1-\varepsilon - t)n\Big) \\
&\le \bar{\P}_{x^\ast}(2(T_0+1) \ge tn) + \frac{n_1\E T_1^2}{n^2(\varepsilon+t/2)^2}
\le \bar{\P}_{x^\ast}(2(T_0+1) \ge tn) + \frac{\pi^\ast(\theta)\E T_1^2}{n(\varepsilon+t/2)^2},
\end{align*}
where we used the fact that for $i > 0$, $\E T_i = \pi^\ast(\theta)^{-1}$

Now
\begin{align*}
\bar{\E}_{x^\ast}(n_1-N)_+ &= \pi^\ast(\theta)n \int_0^1\bar{\P}_{x^\ast}((n_1-N)_+ \ge \pi^\ast(\theta)nt)dt\\
&\le \pi^\ast(\theta)n\int_0^1 \Big(\bar{\P}_{x^\ast}(2(T_0+1) \ge tn) + \frac{\pi^\ast(\theta)\E T_1^2}{n(\varepsilon+t/2)^2}\Big)dt\\
&\le 2\pi^\ast(\theta)(\bar{\E}_{x^\ast} T_0+1) +2\varepsilon^{-1}\pi^\ast(\theta)^2\bar{\E}T_1^2.
\end{align*}

Thus
\begin{displaymath}
\bar{\E}_{x^\ast}
\Big\|\sum_{i=N+1}^{n_1}
s_{i-1}(f)\Big\|_{\mathcal{F}}1_{\{N<n_1\}} \le 2\Big(1+ \bar{\E}_{x^\ast} T_0
+ \frac{\pi^\ast(\theta){\bar{\E}T_1^2}}{\varepsilon}\Big)\pi(F).
\end{displaymath}
Combining the above estimate with (\ref{expectation1}), (\ref{expectation2}), (\ref{expectation3}) and the inequality
$$\bar{\E}_{x^\ast} s_0(F) = \pi^\ast(\theta)^{-1}\pi(F) \le \pi^\ast(\theta)\bar{\E}T_1^2\pi(F),$$
we get
\begin{displaymath}
\bar{\E}_{x^\ast} \Big\|\sum_{i=1}^{n_2} s_{i-1}(f)\Big\|_{\mathcal{F}} \le \max(\frac{n_2}{n_1+1},1)\Big(\bar{\E}_{x^\ast} \Big\|\sum_{i=1}^{N} s_{i-1}(f)\Big\|_{\mathcal{F}}
+ \Big(2+ 2\bar{\E}_{x^\ast} T_0
+ 3\frac{\pi^\ast(\theta){\bar{\E}T_1^2}}{\varepsilon}\Big)\pi(F)\Big).
\end{displaymath}
To finish the proof it suffices to note that for $\varepsilon \in (0,1/2)$,
\begin{displaymath}
\frac{n_2}{n_1+1} \le \frac{(1+\varepsilon)n\pi^\ast(\theta)}{(1 - \varepsilon) n\pi^\ast(\theta)}
\le 1 + 4\varepsilon.
\end{displaymath}
\end{proof}

\begin{coro} \label{expectation_estimate_cor}In the setting of Theorem \ref{Markov_emp_thm}, for every $\varepsilon \in (0,1/2)$,
\begin{align*}
\bar{\E}_{x^\ast} \Big\|\sum_{i=1}^{\lfloor (1+\varepsilon)\pi^\ast(\theta)n\rfloor} s_{i-1}(f)\Big\|_{\mathcal{F}} \le &(1+4\varepsilon)\Big(\bar{\E}_{x^\ast} \Big\|\sum_{i=1}^{N} s_{i-1}(f)\Big\|_{\mathcal{F}}
\\&+ \Big(2 + 2{\bf e}
+ 6\pi^\ast(\theta){\bf d}^2\varepsilon^{-1}\Big)\pi(F)\Big).
\end{align*}
\end{coro}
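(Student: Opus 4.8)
The plan is to deduce the corollary directly from Lemma \ref{expectation_estimate} by replacing the two raw moments $\bar{\E}_{x^\ast}\sigma(0)$ and $\bar{\E}_{x^\ast}(\sigma(1)-\sigma(0))^2$ appearing there with bounds phrased in terms of the global parameters ${\bf e} = \|\sigma(0)\|_{\psi_1,\bar{\P}_{x^\ast}}$ and ${\bf d} = \|\sigma(1)-\sigma(0)\|_{\psi_1}$. Concretely, it suffices to establish the two elementary moment estimates $\bar{\E}_{x^\ast}\sigma(0) \le {\bf e}$ and $\bar{\E}_{x^\ast}(\sigma(1)-\sigma(0))^2 \le 2{\bf d}^2$; once these are in hand the corollary follows by mere substitution.

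To prove these bounds I would first invoke the definition of the exponential Orlicz norm for $\alpha = 1$. Since $c \mapsto \bar{\E}\exp(|Y|/c)$ is continuous and nonincreasing on $(0,\infty)$ and tends to $1$ as $c \to \infty$, the infimum defining $\|Y\|_{\psi_1}$ is attained at the constraint level, so $\bar{\E}_{x^\ast}\exp(\sigma(0)/{\bf e}) \le 2$ and $\bar{\E}\exp(|\sigma(1)-\sigma(0)|/{\bf d}) \le 2$. Expanding the exponential as a power series (the interchange of sum and expectation being justified by monotone convergence, all summands being nonnegative) and subtracting the zeroth-order term, we get for $Y=\sigma(0)$ and $Y=\sigma(1)-\sigma(0)$ respectively that $\sum_{k\ge 1}\bar{\E}|Y|^k/(k!\,\|Y\|_{\psi_1}^k)\le 1$. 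As every summand is nonnegative, each is individually at most $1$; comparing the first- and second-order coefficients yields
\[
\frac{\bar{\E}_{x^\ast}\sigma(0)}{{\bf e}} \le 1 \quad\text{and}\quad \frac{\bar{\E}_{x^\ast}(\sigma(1)-\sigma(0))^2}{2{\bf d}^2} \le 1,
\]
which are exactly the two desired moment bounds (for the first I also use $\sigma(0)\ge 0$, so $\bar{\E}_{x^\ast}\sigma(0) = \bar{\E}_{x^\ast}|\sigma(0)|$).

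Plugging these into Lemma \ref{expectation_estimate} gives
\[
2 + 2\bar{\E}_{x^\ast}\sigma(0) + 3\frac{\pi^\ast(\theta)\,\bar{\E}_{x^\ast}(\sigma(1)-\sigma(0))^2}{\varepsilon} \le 2 + 2{\bf e} + 6\pi^\ast(\theta){\bf d}^2\varepsilon^{-1},
\]
while the prefactor $(1+4\varepsilon)$ and the leading term $\bar{\E}_{x^\ast}\|\sum_{i=1}^N s_{i-1}(f)\|_{\mathcal{F}}$ carry over verbatim, producing precisely the asserted inequality. There is essentially no obstacle here: the only point requiring a little care is the justification that the defining inequality of the Orlicz norm holds with constant $2$ at $c$ equal to the norm itself, which follows from the monotone convergence argument indicated above; everything else is bookkeeping.
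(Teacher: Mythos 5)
Your proposal is correct and follows essentially the same route as the paper: the paper also deduces the corollary from Lemma \ref{expectation_estimate} via the elementary bound $1 + \frac{\E X}{\|X\|_{\psi_1}} + \frac{\E X^2}{2\|X\|_{\psi_1}^2} \le 2$, which is exactly your term-by-term comparison of the series expansion of $\E\exp(|X|/\|X\|_{\psi_1})\le 2$, yielding $\bar{\E}_{x^\ast}\sigma(0)\le {\bf e}$ and $\bar{\E}_{x^\ast}(\sigma(1)-\sigma(0))^2\le 2{\bf d}^2$. Your extra care about the infimum in the Orlicz norm being admissible (via monotone convergence) is a standard point the paper leaves implicit, and the rest is the same substitution into the lemma.
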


\begin{proof}
We use Lemma \ref{expectation_estimate} together with the inequality
\begin{displaymath}
1 + \frac{\E X}{\|X\|_{\psi_1}} + \frac{\E X^2}{2\|X\|_{\psi_1}^2} \le 2,
\end{displaymath}
which holds for every random variable with $\|X\|_{\psi_1} \neq 0$.
\end{proof}

\begin{lema}\label{UW_est} In the setting of Theorem \ref{Markov_emp_thm}
\begin{align*}
\bar{\E}_{x^\ast} U_n(F) \le 2\Gamma(1+1/\alpha) \cca,
\end{align*}
and
\begin{displaymath}
\bar{\E}_{x^\ast} W_n(F) \le 2^{1/\alpha}e\Gamma(1+1/\alpha) \log^{1/\alpha}(e/\pi^\ast(\theta))\ccb,
\end{displaymath}
where $\Gamma(z) = \int_0^\infty t^{z-1}\exp(-t)dt$ is the Euler function.
\end{lema}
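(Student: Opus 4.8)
The plan is to reduce both estimates to the elementary identity $\bar{\E}_{x^\ast} Y = \int_0^\infty \bar{\P}_{x^\ast}(Y>t)\,dt$ for nonnegative $Y$, combined with $\psi_\alpha$ tail bounds that integrate explicitly. The single analytic input I would record first is that, by Markov's inequality, any nonnegative $Y$ satisfies $\bar{\P}_{x^\ast}(Y>t)\le 2\exp(-t^\alpha/\|Y\|_{\psi_\alpha}^\alpha)$, so that the substitution $u=t^\alpha/\|Y\|_{\psi_\alpha}^\alpha$ yields
\begin{displaymath}
\bar{\E}_{x^\ast} Y \le \int_0^\infty 2\exp\Big(-\frac{t^\alpha}{\|Y\|_{\psi_\alpha}^\alpha}\Big)dt = 2\Gamma(1+1/\alpha)\|Y\|_{\psi_\alpha}.
\end{displaymath}

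For $U_n(F)$ this is immediate: by the triangle inequality and monotonicity of the summation range, $U_n(F)\le\sum_{k=0}^{\min(\sigma,n-1)}|Z_k(F)|\le\sum_{k=0}^{\sigma}|Z_k(F)|$, and the $\psi_\alpha$ norm of the right-hand side under $\bar{\P}_{x^\ast}$ is by definition exactly $\cca$. Applying the elementary fact to $Y=\sum_{k=0}^{\sigma}|Z_k(F)|$ gives $\bar{\E}_{x^\ast} U_n(F)\le 2\Gamma(1+1/\alpha)\cca$.

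The bound on $W_n(F)$ is the substantial part. Here I would reuse the tail estimate \eqref{tg2} from the proof of Theorem \ref{thm2}, whose derivation (based on the occupation-measure identity) applies verbatim to the envelope $F$ and gives $\bar{\P}_{x^\ast}(W_n(F)>t)\le 2\pi^\ast(\theta)^{-1}\exp(-t^\alpha/\ccb^\alpha)$. The point to be careful about is that estimating $\|W_n(F)\|_{\psi_\alpha}$ directly would drag the prefactor $\pi^\ast(\theta)^{-1}$ inside the Orlicz norm and produce a bound \emph{polynomial} in $\pi^\ast(\theta)^{-1}$. To obtain the logarithmic dependence instead, I would integrate the tail by hand, splitting at the crossover level $t_0=\ccb(\log(2/\pi^\ast(\theta)))^{1/\alpha}$ where $1$ and $2\pi^\ast(\theta)^{-1}\exp(-t^\alpha/\ccb^\alpha)$ agree:
\begin{displaymath}
\bar{\E}_{x^\ast} W_n(F)\le t_0 + 2\pi^\ast(\theta)^{-1}\int_{t_0}^\infty \exp\Big(-\frac{t^\alpha}{\ccb^\alpha}\Big)dt.
\end{displaymath}
A change of variables rewrites the last integral as $\frac{\ccb}{\alpha}\int_0^\infty(\log(2/\pi^\ast(\theta))+v)^{1/\alpha-1}e^{-v}\,dv$, where the factor $\exp(-t_0^\alpha/\ccb^\alpha)=\pi^\ast(\theta)/2$ cancels $\pi^\ast(\theta)^{-1}$ exactly; this cancellation is the crux. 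Writing $L=\log(e/\pi^\ast(\theta))\ge 1$ and using $\log(2/\pi^\ast(\theta))\le L$, the pointwise bound $(L+v)^{1/\alpha-1}\le L^{1/\alpha-1}(1+v)^{1/\alpha-1}$ (valid since $L\ge1$ and $1/\alpha-1\ge0$), and $\int_0^\infty(1+v)^{1/\alpha-1}e^{-v}\,dv\le e\Gamma(1/\alpha)$, I arrive at $\bar{\E}_{x^\ast} W_n(F)\le \ccb L^{1/\alpha}+e\Gamma(1+1/\alpha)\ccb L^{1/\alpha-1}$, and the claim follows from $L\ge1$, $1\le e\Gamma(1+1/\alpha)$ and $2\le 2^{1/\alpha}$.

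The main obstacle is precisely this last step: the naive route through $\|W_n(F)\|_{\psi_\alpha}$ is lossy and gives polynomial rather than logarithmic growth in $\pi^\ast(\theta)^{-1}$, so one is forced to integrate the tail directly and exploit the exact cancellation of the prefactor at the crossover. Everything after that is routine constant bookkeeping to match the stated factor $2^{1/\alpha}e\Gamma(1+1/\alpha)$.
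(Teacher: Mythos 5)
Your proof is correct and follows essentially the same route as the paper: both parts integrate the tail bounds inherited from the proof of Theorem \ref{thm2} applied to the envelope $F$, and your split of the $W_n(F)$-integral at the crossover $t_0=\ccb\log^{1/\alpha}(2/\pi^\ast(\theta))$ is just an explicit version of the paper's observation that $2\pi^\ast(\theta)^{-1}\exp(-(t/\ccb)^\alpha)\le\exp\big(1-t^\alpha/\big(2\log(e/\pi^\ast(\theta))\ccb^\alpha\big)\big)$, after which one integrates a Weibull tail. You correctly identified the one genuine point of the lemma, namely that the prefactor $\pi^\ast(\theta)^{-1}$ must be absorbed at only logarithmic cost, and your constants match the statement exactly.
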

\begin{proof}
The first estimate follows from integration by parts, the estimates $\bar{\P}_{x^\ast}(U_n(F) \ge t) \le2\exp(-(t/\cca)^{\alpha})$ and the formula for the expectation of Weibull variables. The second one is analogous, one simply needs to note that
the inequality $\bar{\P}_{x^\ast}(W_n(F) \ge t) \le 2\pi^\ast(\theta)^{-1}\exp(-(t/\ccb)^\alpha)$ (obtained as in the proof of Theorem \ref{thm2}) implies that
\begin{displaymath}
\bar{\P}_{x^\ast}(W_n(F) \ge t) \le \exp\Big(1 - \frac{t^\alpha}{2\log(e\pi^\ast(\theta)^{-1})\ccb^\alpha}\Big).
\end{displaymath}
\end{proof}

\begin{proof}[Proof of Theorem \ref{Markov_emp_thm}]
As in the case of a single additive functional, we decompose
\begin{displaymath}
Z := \sup_{f\in \mathcal{F}} |\sum_{i=1}^{n-1} f(X_i)| \le |U_n(F)| + \sup_{f\in \mathcal{F}}|V_n(f)| + |W_n(F)|.
\end{displaymath}

Similarly as in the proof of Theorem \ref{thm2} we get
\begin{align}\label{UF}
\bar{\P}_{x^\ast}(|U_n(F)| \ge \varepsilon t/2) \le 2\exp\Big(-\frac{(\varepsilon t)^\alpha}{(2\cca)^\alpha}\Big)
\end{align}
and
\begin{align}\label{WF}
\bar{\P}_{x^\ast}(|W_n(F)| \ge \varepsilon t/2) \le 2\pi^\ast(\theta)^{-1}\exp\Big(-\frac{(\varepsilon t)^\alpha}{(2\ccb)^\alpha}\Big).
\end{align}
Let us also note that by Lemma \ref{UW_est}

\begin{displaymath}
\bar{\E}_{x^\ast}\sup_{f\in\mathcal{F}} |\sum_{i=0}^{n-1} f(X_i)| \ge \bar{\E}_{x^\ast}\sup_{f\in\mathcal{F}} |V_n(f)| - 2\Gamma(1+1/\alpha) \cca
- 2^{1/\alpha}e\Gamma(1+1/\alpha) \log^{1/\alpha}(e/\pi^\ast(\theta))\ccb.
\end{displaymath}
By Corollary \ref{expectation_estimate_cor} we get
\begin{align*}
(1+4\varepsilon)\bar{\E}_{x^\ast}\sup_{f\in \mathcal{F}}|V_n(f)| \ge \bar{\E}_{x^\ast} \Big\|\sum_{i=1}^{(1+\varepsilon)\pi^\ast(\theta)n} s_{i-1}(f)\Big\|_{\mathcal{F}} - \Big(6 + 6{\bf e}
+ 18\pi^\ast(\theta){\bf d}^2\varepsilon^{-1}\Big)\pi(F),
\end{align*}
which together with the previous estimate gives
\begin{align*}
(1+4\varepsilon)\bar{\E}_{x^\ast} Z \ge& \bar{\E}_{x^\ast} \Big\|\sum_{i=1}^{(1+\varepsilon)\pi^\ast(\theta)n} s_{i-1}(f)\Big\|_{\mathcal{F}} - \Big(6 + 6{\bf e}
+ 18\pi^\ast(\theta){\bf d}^2\varepsilon^{-1}\Big)\pi(F) \\
&- 6\Gamma(1+1/\alpha) \cca
- 3\cdot 2^{1/\alpha}e\Gamma(1+1/\alpha) \log^{1/\alpha}(e/\pi^\ast(\theta))\ccb.
\end{align*}

When combined with the trivial bound $(1+4\varepsilon)(1+\varepsilon) \le (1+7\varepsilon)$ for $\varepsilon\in(0,1/2)$, this gives
\begin{align*}
A &:= \Big\{\sup_{f\in \mathcal{F}} |V_n(f)| \ge (1+7\varepsilon) \bar{\E}_{x^\ast}Z + (1-\varepsilon)t\Big\}\\
&\subseteq \Big\{\sup_{f\in \mathcal{F}} |V_n(f)| \ge (1+\varepsilon)\bar{\E}_{x^\ast} \Big\|\sum_{i=1}^{(1+\varepsilon)\pi^\ast(\theta)n} s_{i-1}(f)\Big\|_{\mathcal{F}} - \varepsilon C(\varepsilon)+(1-\varepsilon)t\Big\}\\
&\subseteq
\Big\{\sup_{f\in \mathcal{F}} |V_n(f)| \ge (1+\varepsilon)\bar{\E}_{x^\ast} \Big\|\sum_{i=1}^{(1+\varepsilon)\pi^\ast(\theta)n} s_{i-1}(f)\Big\|_{\mathcal{F}} + t(1-2\varepsilon)\Big\}
\end{align*}
for $t \ge C(\varepsilon)$.

By Lemma \ref{N_integr} (applied with $\varepsilon/2$ instead of $\varepsilon$) we get $$\bar{\P}_{x^\ast}(N > (1+\varepsilon) \pi^\ast(\theta) n) \le e\exp(-\varepsilon^2 n/(144\pi^\ast(\theta){\bf d}^2)),$$
which gives
\begin{align}\label{n_summand}
\bar{\P}_{x^\ast}(A) \le \bar{\P}_{x^\ast}(A\;\& N\le (1+\varepsilon)\pi^\ast(\theta)n) + e\exp(-\varepsilon^2 n/(144\pi^\ast(\theta){\bf d}^2)).
\end{align}
Now Proposition \ref{unbounded_emp_ind} gives
\begin{align*}
\bar{\P}_{x^\ast}(A&\;\& N\le (1+\varepsilon)\pi^\ast(\theta)n) \\
&\le \bar{\P}_{x^\ast}(\max_{k \le \lfloor (1+\varepsilon)\pi^\ast(\theta)n\rfloor}
\sup_{f\in \mathcal{F}}|\sum_{i=1}^k s_{i-1}(f)| \ge (1+\varepsilon)\E \Big\|\sum_{i=1}^{\lfloor(1+\varepsilon)\pi^\ast(\theta)n\rfloor} s_{i-1}(f)\Big\|_{\mathcal{F}} + t(1-2\varepsilon))\\
&\le \exp\Big(-\frac{(1-2\varepsilon)^4 t^2}{2(1+\varepsilon)^2n\sigma^2}\Big) +
e\exp\Big(-\frac{t(1-2\varepsilon)^2}{2M(1+\varepsilon^{-1})(3+4\varepsilon^{-1})}\Big) + e^8\exp\Big({-\frac{(\varepsilon (1-2\varepsilon)t)^\alpha}{2c^\alpha}}\Big),
\end{align*}
which in combination with (\ref{UF}), (\ref{WF}) and (\ref{n_summand}) ends the proof.
\end{proof}

\paragraph{Example}
The difficulty one has to deal with when applying bounds in the spirit of Theorem \ref{Markov_emp_thm} in concrete situations is controlling the order of $\E Z$. In general this is a difficult problem related to the geometry of the class $\mathcal{F}$. To illustrate the theorem we will consider a very special situation, namely, when $\mathcal{F}$ is a dense set in the unit ball of some Hilbert space, i.e. we will consider a function $G \colon \mathcal{X} \to H$ for a separable Hilbert space $(H,\|\cdot\|)$. The choice of the Hilbert space in our example is motivated by the possibility of using the parallelogram identity to estimate $\E Z^2$, however Hilbert space valued random variables appear commonly in probability and statistics, see e.g. Proposition 2 in \cite{FM} for a development related to Markov chains (the Hilbert space considered there is finite dimensional, but the constants in the inequalities do not depend on the dimension, so it fits our setting of separable Hilbert spaces).

Assume that $\pi G = 0$. We are interested in tail estimates for
\begin{displaymath}
Z = \|\sum_{i=0}^{n-1} G(X_i)\| = \sup_{f \in \mathcal{F}} |\sum_{i=0}^{n-1} f(G(X_i))|.
\end{displaymath}
Set $F = \|G\|$ and assume that the chain is geometrically ergodic and strongly aperiodic and that the parameters $\cca,\ccb,\ccc$ for $F$ are finite, which is e.g. the case if $F$ is controlled in terms of the drift function, like in Theorem \ref{thm:B_new}.
Recall \eqref{decomp} and \eqref{N_def}, which imply that
\begin{align*}
\E_x Z &\le \bar{\E}_{x^\ast} U_n(F) + \bar{\E}_{x^\ast} W_n(F) + \E \max_{k\le n-1} \|\sum_{i=1}^k s_{i-1}(G)\|\\
&\le  2\Gamma(1+1/\alpha) \cca + 2^{1/\alpha}e\Gamma(1+1/\alpha) \log^{1/\alpha}(e/\pi^\ast(\theta))\ccb
+ 2(\E\|\sum_{i=1}^{n-1} s_{i-1}(G)\|^2)^{1/2},
\end{align*}
where we used Lemma \eqref{UW_est} to handle $\bar{\E}_{x^\ast} U_n(F)$ and $\bar{\E}_{x^\ast} W_n(F)$ and Doob's inequality to deal with the last summand. Now, due to the parallelogram identity, independence and centeredness of the variables $s_i$  and \eqref{eq:boundonsigma} we have
\begin{displaymath}
\E\|\sum_{i=1}^{n-1} s_{i-1}(G)\|^2 = n \E \|s_0(G)\|^2 \le n \E s_0(F)^2 \le 4\alpha^{-1}\Gamma(2/\alpha) \ccc^2  n.
\end{displaymath}
Thus we get
\begin{displaymath}
\E_x Z \le 2\Gamma(1+1/\alpha) \cca + 2^{1/\alpha}e\Gamma(1+1/\alpha) \log^{1/\alpha}(e/\pi^\ast(\theta))\ccb +
4\alpha^{-1/2}\Gamma(2/\alpha)^{1/2} \ccc \sqrt{n}.
\end{displaymath}

Thus Theorem \ref{Markov_emp_thm} yields
\begin{align*}
&\P_x\Big(\|\frac{1}{n}\sum_{i=0}^{n-1} G(X_i)\| \ge \\
&\phantom{aaaaa}(1+7\varepsilon)\Big(\frac{2\Gamma(1+1/\alpha) \cca+2^{1/\alpha}e\Gamma(1+1/\alpha) \log^{1/\alpha}(e/\pi^\ast(\theta))\ccb}{n} +
\frac{4\alpha^{-1/2}\Gamma(2/\alpha)^{1/2} \ccc}{\sqrt{n}} \Big) + t\Big) \\
&\le
\exp\Big(-\frac{(1-2\varepsilon)^4 n t^2}{2(1+\varepsilon)^2 \sigma^2}\Big) +
e\exp\Big(-\frac{nt(1-2\varepsilon)^2}{2M(1+\varepsilon^{-1})(3+4\varepsilon^{-1})}\Big)
+ e^8\exp\Big({-\frac{(\varepsilon (1-2\varepsilon)n t)^\alpha}{2\ccc^\alpha}}\Big)\\
&+e\exp\Big(-\frac{\varepsilon^2 n}{144{\pi^\ast(\theta)\bf d}^2}\Big)
 + 2\exp\Big(-\frac{(\varepsilon n t)^\alpha}{(2\cca)^\alpha}\Big)
+2\pi^\ast(\theta)^{-1}\exp\Big(-\frac{(\varepsilon n t)^\alpha}{(2\ccb)^\alpha}\Big)
\end{align*}
where $M=\ccc(3\alpha^{-2}\log(n))^{1/\alpha}$, provided that $nt \ge C(\varepsilon)$. In particular for $n$ large enough, depending explicitly on $t$ and $\varepsilon$ we get the same bound on
\begin{displaymath}
\P_x\Big(\|\frac{1}{n}\sum_{i=0}^{n-1} G(X_i)\| \ge 2t\Big).
\end{displaymath}

We note that with this Law of Large Numbers normalization, where one is usually interested in $t$ being a small constant, independent of $n$ (say $t \in (0,1)$), the fourth summand on the right hand side above, which is independent of $t$, does not dominate the whole sum.

Let us also remark that estimates of the same order for the expectation could be also achieved under some bracketing or VC-dimension assumption on the class $\mathcal{F}$. This is well known in the independent case (see e.g. the monograph \cite{vdVW}) and there are some results concerning Markov chains or mixing sequences (e.g. \cite{BL,Levental1,Rio2}).

\noindent University of Warsaw\\
Banacha 2\\
02-097 \\
Warszawa\\
Poland\\
E-mail: R.Adamczak@mimuw.edu.pl, W.Bednorz@mimuw.edu.pl
\end{document}